\newtheorem{theorem}{Theorem}[section]
\newtheorem{lemma}[theorem]{Lemma}
\theoremstyle{definition}
\newtheorem{definition}[theorem]{Definition}
\newtheorem{cor}[theorem]{Corollary}
\newtheorem{remark}[theorem]{Remark}
\newtheorem{ques}[theorem]{Question}
\newtheorem{prop}[theorem]{Proposition}
\def \RP {{\bf RP}}
\def \Z  {\mathbb{Z}}
\def \N  {\mathbb{N}}
\def \Q {{\bf Q}}
\def \I {\mathrm{Ind}}
\begin{document}
\title
{Independence and almost automorphy of higher order}
\author[J.~Qiu]{Jiahao Qiu}
\address[J.~Qiu]{Wu Wen-Tsun Key Laboratory of Mathematics, USTC, Chinese Academy of Sciences and
School of Mathematics, University of Science and Technology of China,
Hefei, Anhui, 230026, P.R. China}
\email{qiujh@mail.ustc.edu.cn}

\keywords{Independence, almost automorphy of higher order}
\subjclass[2010]{54H20, 37B99}
\begin{abstract}
In this paper, it is shown that for a minimal system $(X,T)$ and $d,k\in \N$,
if $(x,x_i)$ is regionally proximal of order $d$ for $1\leq i\leq k$,
then $(x,x_1,\ldots,x_k)$ is $(k+1)$-regionally proximal of order $d$.

Meanwhile,
we introduce
the notion of $\mathrm{IN}^{[d]}$-pair:
for a dynamical system $(X,T)$ and $d\in \N$,
a pair $(x_0,x_1)\in X\times X$ is called an $\mathrm{IN}^{[d]}$-pair
if for any $k\in \N$ and any neighborhoods $U_0 ,U_1 $ of $x_0$ and $x_1$ respectively,
there exist integers $p_j^{(i)},1\leq i\leq k,$ $1\leq j\leq d$ such that
$$
\bigcup_{i=1}^k\{ p_1^{(i)}\epsilon(1)+\ldots+p_d^{(i)}
\epsilon(d):\epsilon(j)\in \{0,1\},1\leq j\leq d\}\backslash \{0\}\subset \I(U_0,U_1),
$$
where $\I(U_0,U_1)$ denotes the collection of all independence sets for $(U_0,U_1)$.
It turns out that
for a minimal system,
if it does not contain any nontrivial $\mathrm{IN}^{[d]}$-pair,
then it is an almost one-to-one extension of
its maximal factor of order $d$.
\end{abstract}

\date{\today}

\maketitle

\section{Introduction}

By a \emph{topological dynamical system} or just a \emph{dynamical system},
we mean a pair $(X,T)$, where $X$ is a compact metric space with a metric $\rho$
and $T:X\to X$ is a homeomorphism.

\medskip

In the recent years, the study of the dynamics of rotations on nilmanifolds
and inverse limits of this kind of dynamics has drawn much interest, since it relates
to many dynamical properties and has important applications in number theory.
We refer to \cite{HK18} and the references therein for a systematic treatment on the subject.

\medskip

In a pioneer work, Host-Kra-Maass in \cite{HKM} introduced the notion of
{\it regionally proximal relation of order $d$}
for a dynamical system $(X,T)$, denoted by $\mathbf{RP}^{[d]}(X)$.
For $d\in\N$, we say that a minimal system is a \emph{system of order d}
if $\mathbf{RP}^{[d]}(X)=\Delta$ and this is equivalent for $(X,T)$ to be
an inverse limit of nilrotations on $d$-step nilsystems (see \cite[Theorem 2.8]{HKM}).
For a minimal distal system $(X,T)$, it was proved that
$\mathbf{RP}^{[d]}(X)$ is an equivalence relation and $X/\mathbf{RP}^{[d]}(X)$
is the maximal factor of order $d$ \cite{HKM}.
Then Shao-Ye \cite{SY12} showed that in fact for
any minimal system, $\mathbf{RP}^{[d]}(X)$ is an equivalence
relation and $\mathbf{RP}^{[d]}(X)$
has the so-called lifting property.

In \cite{HLY11}, the notion of \emph{$k$-regional proximal relation} was introduced.
It was shown that for a minimal system $(X,T)$ and $k\geq 2$,
if $(x,x_{i})$ is regionally proximal for all $1\leq i\leq k$,
then $(x_1,\ldots,x_k)$ is $k$-regionally proximal,
i.e. for every $\delta>0$, there exist $x_i'\in X,1\leq i\leq k$ and $n\in \Z$
  such that $\rho(x_i,x_i')<\delta$ and
  $ \rho(T^n x_1',T^nx_i')<\delta,1\leq i\leq k$.
In this paper, we extend this result to higher order (Theorem \ref{main-thm1}).

\medskip

Following the \emph{local entropy theory}, for a survey see \cite{GY09},
each dynamical system admits a
maximal zero topological entropy factor, and this factor is induced by the smallest
closed invariant equivalence relation containing \emph{entropy pairs} \cite{BL93}.
In \cite{HY06}, entropy pairs are characterized as those pairs that
admit an \emph{interpolating set} of positive density.
Later on, the notions of \emph{sequence entropy pairs} \cite{HLSY03} and \emph{untame pairs}
(called \emph{scrambled pairs} in \cite{WH06}) were introduced.
In \cite{KL07} the concept of \emph{independence}
was extensively studied and used to unify the aforementioned notions.
Let $(X,T)$ be a dynamical system and $\mathcal{A}=(U_0,U_1,\ldots,U_k)$ be a tuple
of subsets of $X$. We say that a subset $F\subset \Z$ is an \emph{independence set} for $\mathcal{A}$
if for any nonempty finite subset $J\subset F$ and any $s=(s(j):j\in J)\in \{0,1,\ldots,k\}^J$
we have $\bigcap_{j\in J}T^{-j}U_{s(j)}\neq \emptyset$.
It was shown that a pair of points $x_0,x_1$ in $X$ is a sequence entropy pair if and only
if each $\mathcal{A}=(U_0,U_1)$, where $U_0$ and $U_1$ are
neighborhoods of $x_0$ and $x_1$ respectively, has arbitrarily long finite
independence sets. Also, the pair is an untame pair if and only if each $\mathcal{A} = (U_0 ,U_1 )$
as before has infinite independence sets.
It was shown (\cite{EG07,HLSY03,KL07}) that a
minimal null (resp. tame) system is an almost one-to-one extension of its maximal equicontinuous
factor and is uniquely ergodic.

For $d\in \N$ and $p_1,\ldots,p_d\in \Z$,
we call the set $\{p_1\epsilon(1)+\ldots+p_d\epsilon(d):\epsilon(j)\in \{0,1\},1\leq j\leq d\}\backslash\{0\}$
an \emph{$IP_d$-set}.
The notion of \emph{$Ind_{fip}$-pair} was studied in \cite{DDMSY13}:
a pair of points $x_0,x_1$ in $X$ is an $\text{Ind}_{fip}$-pair if and only
if the independence sets for each $\mathcal{A}=(U_0,U_1)$ as before
contain an $\mathrm{IP}_d$-set for any $d\in \N$.
It was showed that
a minimal system without any nontrivial $\mathrm{Ind}_{fip}$-pair
is an almost one-to-one extension of
its maximal factor of order $\infty$.

\medskip

So, it is natural to ask: can we give a finer classification of almost automorphy of higher order
using independence?

In this paper, we introduce the notion of \emph{$IN^{[d]}$-pair}:
a pair of points $x_0,x_1$ in $X$ is an $\mathrm{IN}^{[d]}$-pair
if and only if
the independence sets for each $\mathcal{A} = (U_0 ,U_1 )$ as before
contain a union of arbitrarily finitely many $\mathrm{IP}_d$-sets.
Using dynamical cubespaces,
we first provide a characterization of $\mathrm{IN}^{[d]}$-pairs for minimal systems
(Lemma \ref{INN}).
By \cite[Chapter 6]{HK18}, the dynamical cubespaces of minimal nilsystems can also be viewed as nilsystems.
Following this,
it is shown that for minimal nilsystems,
nontrivial regionally proximal of order $d$ pairs
are $\mathrm{IN}^{[d]}$-pairs
 (Theorem \ref{ind-nilsystem}).
 Moreover, this property also holds for inverse limits of minimal nilsystems.

 For a minimal system and $d\in \N$,
 by reducing on the maximal factor of order $\infty$
 which is an inverse limit of minimal nilsystems \cite{DDMSY13},
 we can show that any
 nontrivial regionally proximal of order $d$ pair
is an $\mathrm{IN}^{[d]}$-pair if it is minimal in the product system (Lemma \ref{main-thm3}).
Among other things it turns out that for
a minimal system
if it does not contain any nontrivial $\mathrm{IN}^{[d]}$-pair,
then it is an almost one-to-one extension of
its maximal factor of order $d$ (Theorem \ref{main-thm2}).

\medskip

The paper is organized as follows.
In Section 2, the basic notions used in the paper are introduced.
In Section 3, we discuss the $k$-regionally proximal relation of higher order (Theorem \ref{main-thm1}).
In Section 4, it is shown that
for a minimal nilsystem
any regionally proximal of order $d$ pair is an $\mathrm{IN}^{[d]}$-pair
(Theorem \ref{ind-nilsystem}).
In the final section,
among other things we show that
for a minimal system
if it does not contain any nontrivial $\mathrm{IN}^{[d]}$-pair,
then it is an almost one-to-one extension of
its maximal factor of order $d$ (Theorem \ref{main-thm2}).

\medskip

\noindent {\bf Acknowledgments.}
The author would like to thank Professor X. Ye
for helping discussions and remarks.
The author was supported by NNSF of China (11431012).

\section{Preliminaries}
In this section we gather definitions and preliminary results that
will be necessary later on.
Let $\N$ and $\Z$ be the sets of all positive integers
and integers respectively.

\subsection{Topological dynamical systems}

 A \emph{topological dynamical system}
 (or \emph{dynamical system}) is a pair $(X,T)$,
 where $X$ is a compact metric space with a metric $\rho$ and $T : X \to  X$
is a homeomorphism.
If $A$ is a non-empty closed subset of $X$ and $TA\subset A$, then $(A,T|_A)$ is called a \emph{subsystem} of $(X,T)$,
where $T|_A$ is the restriction of $T$ on $A$.
If there is no ambiguity, we use the notation $T$ instead of $T|_A$.
For $x\in X,\mathcal{O}(x,T)=\{T^nx: n\in \Z\}$ denotes the \emph{orbit} of $x$.
A dynamical system $(X,T)$ is called \emph{minimal} if
every point has dense orbit in $X$.
A subset $Y$ of $X$ is called \emph{minimal} if $(Y,T)$ is a minimal subsystem of $(X,T)$.
A point $x\in X$ is called \emph{minimal} if it is contained in a minimal set $Y$ or,
equivalently, if the subsystem $(\overline{\mathcal{O}(x,T)},T)$ is minimal.

A \emph{homomorphism} between the dynamical systems $(X,T)$ and $(Y,T)$ is a continuous onto map
$\pi:X\to Y$ which intertwines the actions; one says that $(Y,T)$ is a \emph{factor} of $(X,T)$
and that $(X,T)$ is an \emph{extension} of $(Y,T)$. One also refers to $\pi$ as a \emph{factor map} or
an \emph{extension} and one uses the notation $\pi : (X,T) \to (Y,T)$. The systems are said
to be \emph{conjugate} if $\pi$ is a bijection. An extension $\pi$ is determined
by the corresponding closed invariant equivalence relation $R_\pi=\{(x,x')\in X\times X\colon \pi(x)=\pi(x')\}$.
An extension $\pi : (X,T) \to (Y,T)$
is \emph{almost one-to-one} if the $G_\delta$ set $X_0=\{x\in X:\pi^{-1}(\pi (x))=\{x\}\}$ is dense.

\subsection{Discrete cubes and faces}
Let $X$ be a set and let $d\ge 1$ be an integer. We view the element $\epsilon\in\{0, 1\}^d$ as
a sequence $\epsilon=(\epsilon(1),\ldots, \epsilon(d))$, where $\epsilon(i)\in\{0,1\},$
$1\leq i \leq d$.
If $\vec{n} = (n_1,\ldots, n_d)\in \Z^d$ and $\epsilon\in \{0,1\}^d$, we
define
\[
 \vec{n}\cdot \epsilon = \sum_{i=1}^d n_i\epsilon(i) .
 \]

We denote the set of maps $\{0,1\}^{d}\to X$
by $X^{[d]}$.
For $\epsilon\in \{0,1\}^d$ and $\mathbf{x}\in X^{[d]}$,
$\mathbf{x}(\epsilon)$ will be used to denote the $\epsilon$-component of $\mathbf{x}$.
For $x\in X$, write $x^{[d]}=(x,x,\ldots,x)\in X^{[d]}$.
The \emph{diagonal} of $X^{[d]}$ is $\Delta^{[d]}=\Delta^{[d]}(X)=\{ x^{[d]}:x\in X\}$.
Usually, when $d=1$, we denote the diagonal by $\Delta_X$ or $\Delta$
instead of $\Delta^{[1]}$.
We can isolate the first coordinate,
writing $X^{[d]}_*=X^{2^d-1}$ and writing $\mathbf{x}\in X^{[d]}$
as $\mathbf{x}=(\mathbf{x}(\vec{0}),\mathbf{x}_*)$,
where $\mathbf{x}_*=(\mathbf{x}(\epsilon):\epsilon\in \{0,1\}^d\backslash \{ \vec{0}\})\in X^{[d]}_*$.

Identifying $\{0,1\}^d$ with the set of vertices of the Euclidean unit cube,
an Euclidean isometry of the unit cube permutes the vertices of the
cube and thus the coordinates
of a point $\mathbf{x}\in X^{[d]}$.
These permutations are the \emph{Euclidean permutations} of $X^{[d]}$.

\medskip

A set of the form
\begin{equation}\label{face-form}
F=\{\epsilon\in\{0,1\}^{d}: \epsilon(i_1)=a_{1},\ldots,\epsilon(i_{k})=a_{k}\}
\end{equation}
for some $k\geq0$, $1\leq i_{1}<\ldots<i_{k}\leq d$ and
$a_{i}\in\{0,1\}$ is called a \emph{face} of \emph{codimension} $k$ of the discrete
cube $\{0,1\}^{d}$.\footnote{The case $k=0$ corresponds to $\{0,1\}^{d}$.}
A face of codimension $1$ is called a \emph{hyperface}.
If all $a_{i}=1$ we say that
the face is \emph{upper}. Note all upper faces contain $\vec{1}$
and there are exactly $2^d$ upper faces.

For $\epsilon,\epsilon'\in \{0,1\}^d$, we say that $\epsilon\geq \epsilon'$ if
$\epsilon(i)\geq \epsilon'(i)$ for all $1\leq i\leq d$.
Let $F$ be a face of $\{0,1\}^d$,
the \emph{smallest element} of the face $F$ is defined by $\min F$,
meaning that $\min F\in F$
and $\epsilon\geq   \min F$ for all $\epsilon\in F$.
Indeed, if a face $F$ has form (\ref{face-form}),
then $\min F(i_j)=a_j$ for $1\leq j\leq k$,
and $\min F(i)=0$ for $i\in \{1,\ldots,d\}\backslash \{i_1,\ldots,i_k\}$.

\subsection{Dynamical cubespaces}
Let $(X,T)$ be a dynamical system and $d\in \N$.
We define $\Q^{[d]}(X)$ to be the closure in $X^{[d]}$ of elements of the form
\[
(T^{\vec{n}\cdot\epsilon}x=T^{n_1\epsilon(1)+\ldots+n_d\epsilon(d)}x:\epsilon\in\{0,1\}^d),
\]
where $\vec{n}=(n_1,\ldots,n_d)\in \Z^d$ and $x\in X$.
We call this set the \emph{dynamical cubespace of dimension d} of the system.

It is important to note that $\mathbf{Q}^{[d]}(X)$ is invariant under the Euclidean permutations of $X^{[d]}$.

\begin{definition}
{\em Face transformations} are defined inductively as follows: Let
$T^{[1]}_1=\mathrm{id} \times T$. If
$\{T^{[d-1]}_j\}_{j=1}^{d-1}$ is defined already, then set
$$T^{[d]}_j=T^{[d-1]}_j\times T^{[d-1]}_j, 1\leq j\leq d-1,$$
$$T^{[d]}_d=\mathrm{id} ^{[d-1]}\times T^{[d-1]}.$$
\end{definition}

It is easy to see that for $1\leq j\leq d$, the face transformation
$T^{[d]}_j : X^{[d]}\rightarrow X^{[d]}$ can be defined by, for
every ${\bf x} \in X^{[d]}$ and $\epsilon\in \{0,1\}^d $,
\[
T^{[d]}_j{\bf x}=
\left\{
  \begin{array}{ll}
    (T^{[d]}_j{\bf x})(\epsilon)=T\mathbf{x}(\epsilon), & \hbox{$ \epsilon(j)=1$;} \\
    (T^{[d]}_j{\bf x})(\epsilon)=\mathbf{x}(\epsilon), & \hbox{$\epsilon(j)=0$.}
  \end{array}
\right.
\]

The {\em face group} of dimension $d$ is the group $\mathcal{F}^{[d]}(X)$ of
transformations of $X^{[d]}$ spanned by the face transformations.
The {\em parallelepiped group} of dimension $d$ is the group
$\mathcal{G}^{[d]}(X)$ spanned by the diagonal transformation and the face
transformations. We often write $\mathcal{F}^{[d]}$ and $\mathcal{G}^{[d]}$ instead of
$\mathcal{F}^{[d]}(X)$ and $\mathcal{G}^{[d]}(X)$ respectively. 
For convenience, we denote the orbit closure of $\mathbf{x}\in X^{[d]}$
under $\mathcal{F}^{[d]}$ by $\overline{\mathcal{F}^{[d]}}(\mathbf{x})$,
instead of $\overline{\mathcal{O}(\mathbf{x},\mathcal{F}^{[d]})}$.
Let $\Q^{[d]}_x(X)=\Q^{[d]}(X)\cap (\{x\}\times X^{2^d-1})$.

\begin{theorem}\cite{SY12}\label{property}
  Let $(X,T)$ be a minimal system and $d\in \N$. Then,
  \begin{enumerate}
  \item $(\mathbf{Q}^{[d]}(X),\mathcal{G}^{[d]})$ is a minimal system.
    \item $(\overline{\mathcal{F}^{[d]}}(x^{[d]}),\mathcal{F}^{[d]})$ is minimal for all $x\in X$.
    \item $\overline{\mathcal{F}^{[d]}}(x^{[d]})$ is the unique $\mathcal{F}^{[d]}$-minimal
    subset in $\mathbf{Q}^{[d]}_x(X)$ for all $x\in X$.
  \end{enumerate}
\end{theorem}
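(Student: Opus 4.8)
\medskip

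\noindent\textbf{Proof strategy.}
The three assertions are closely linked; the plan is to prove (1) first, by induction on $d$, and then to deduce (2) and (3) from (1) together with the fact that the face group sits inside the parallelepiped group as a normal subgroup. I write $T^{[d]}:=T\times\cdots\times T$ for the diagonal transformation of $X^{[d]}$. The basic preliminary observation is that $(T^{[d]}_1)^{n_1}\cdots(T^{[d]}_d)^{n_d}$ sends $x^{[d]}$ to $(T^{\vec n\cdot\epsilon}x:\epsilon\in\{0,1\}^d)$, so $\mathbf{Q}^{[d]}(X)=\overline{\mathcal{F}^{[d]}\Delta^{[d]}(X)}$; in particular $\mathbf{Q}^{[d]}(X)$ is a closed $\mathcal{G}^{[d]}$-invariant subset of $X^{[d]}$. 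Moreover $\overline{\mathcal{O}(x^{[d]},T^{[d]})}=\Delta^{[d]}(X)$ by minimality of $(X,T)$, and applying $\mathcal{F}^{[d]}$ to $\Delta^{[d]}(X)$ gives a dense subset, so $\overline{\mathcal{O}(\mathbf z,\mathcal{G}^{[d]})}=\mathbf{Q}^{[d]}(X)$ for every diagonal (indeed every genuine) cube $\mathbf z$. Hence, for (1) it suffices to prove that the $\mathcal{G}^{[d]}$-orbit closure of an arbitrary $\mathbf w\in\mathbf{Q}^{[d]}(X)$ meets the diagonal $\Delta^{[d]}(X)$.

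\emph{Proof of (1), by induction on $d$.} For $d=1$ one has $\mathbf{Q}^{[1]}(X)=X\times X$ and $\mathcal{G}^{[1]}=\langle\,\mathrm{id}\times T,\ T\times T\,\rangle$, so the orbit closure of $(a,b)$ contains $(a,T^nb)$ for all $n$ and hence, by minimality of $(X,T)$, a diagonal point. For the inductive step I would split $\{0,1\}^d$ by the last coordinate to view $\mathbf{Q}^{[d]}(X)\subset\mathbf{Q}^{[d-1]}(X)\times\mathbf{Q}^{[d-1]}(X)$; the projection onto the factor $\{\epsilon(d)=0\}$ is a factor map onto $(\mathbf{Q}^{[d-1]}(X),\mathcal{G}^{[d-1]})$, minimal by the inductive hypothesis, while $\mathcal{G}^{[d]}$ contains the diagonally embedded copy $\{h\times h:h\in\mathcal{G}^{[d-1]}\}$ of $\mathcal{G}^{[d-1]}$ as well as $T^{[d]}_d=\mathrm{id}\times T^{[d-1]}$. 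Given $\mathbf w=(\mathbf u,\mathbf v)$, I would first apply the inductive hypothesis to $\mathbf u$: choose $h_k\in\mathcal{G}^{[d-1]}$ with $h_k\mathbf u\to p^{[d-1]}$, and pass to a subsequence so that $h_k\mathbf v\to\mathbf v'$, which places $(p^{[d-1]},\mathbf v')$ in the $\mathcal{G}^{[d]}$-orbit closure of $\mathbf w$; then collapse $\mathbf v'$ to a constant cube using $T^{[d]}_d$ (which fixes the lower face and acts on the upper face by $T^{[d-1]}$), interleaved with further elements of $\{h\times h\}$.

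The delicate point — which I expect to be the real content of the theorem — is precisely this last collapsing step: a priori $\overline{\mathcal{O}(\mathbf v',T^{[d-1]})}$ need not meet the diagonal, so one cannot simply iterate $T^{[d]}_d$. What is really needed is the statement that the relevant fibre of the lower-face projection over a constant cube is no larger than a copy of $\Delta^{[d-1]}(X)$; equivalently, a simultaneous recurrence assertion, namely that a point of $\mathbf{Q}^{[d]}(X)$ whose lower $(d-1)$-face is constant lies on the diagonal. The natural tool is the enveloping semigroup $E=E(\mathbf{Q}^{[d-1]}(X),\mathcal{G}^{[d-1]})$ of the (inductively minimal) system: using minimal idempotents of $E$ one should produce a single element collapsing $\mathbf u$ and $\mathbf v'$ to diagonal cubes simultaneously, which completes (1). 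Everything else in the induction is bookkeeping.

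\emph{Proof of (2) and (3) from (1).} Now $\mathbf{Q}^{[d]}(X)$ is $\mathcal{G}^{[d]}$-minimal. Note that $\mathcal{F}^{[d]}$ commutes with $T^{[d]}$, that $\mathcal{G}^{[d]}=\mathcal{F}^{[d]}\langle T^{[d]}\rangle$, and that the $\vec 0$-coordinate map $\mathbf{Q}^{[d]}(X)\to X$ is a factor map onto $(X,T)$ through $\mathcal{G}^{[d]}/\mathcal{F}^{[d]}$, with fibres exactly the sets $\mathbf{Q}^{[d]}_x(X)$. Consequently, for any $\mathcal{F}^{[d]}$-invariant closed $M\subset\mathbf{Q}^{[d]}_x(X)$ the set $\overline{\bigcup_n(T^{[d]})^nM}$ is nonempty, closed and $\mathcal{G}^{[d]}$-invariant, hence equals $\mathbf{Q}^{[d]}(X)$. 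From here: taking $M=\overline{\mathcal{F}^{[d]}}(x^{[d]})$ and combining the fact that $x^{[d]}$ is a minimal point of the $\mathcal{G}^{[d]}$-system with a minimal-idempotent argument in $E(\mathbf{Q}^{[d]}(X),\mathcal{F}^{[d]})$ — exploiting that $x$ is $T$-minimal downstairs — one shows $x^{[d]}$ is already $\mathcal{F}^{[d]}$-minimal, i.e. (2); and taking $M$ to be an arbitrary $\mathcal{F}^{[d]}$-minimal subset of $\mathbf{Q}^{[d]}_x(X)$, picking $\mathbf w\in M$ and writing $g_k=(T^{[d]})^{a_k}f_k\in\mathcal{G}^{[d]}$ with $g_k\mathbf w\to x^{[d]}$ (so $T^{a_k}x\to x$ on comparing $\vec 0$-coordinates), one uses the same machinery to replace the $g_k$ by elements of $\mathcal{F}^{[d]}$, forcing $x^{[d]}\in\overline{\mathcal{F}^{[d]}}(\mathbf w)=M$ and hence $M=\overline{\mathcal{F}^{[d]}}(x^{[d]})$, which is (3). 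The one point requiring care in this last step — that an $\mathcal{F}^{[d]}$-minimal set is preserved by the appropriate minimal idempotent of the Ellis semigroup of $(\mathbf{Q}^{[d]}(X),\mathcal{G}^{[d]})$, so that the diagonal point can actually be reached inside $M$ — is handled again via the commutation of $\mathcal{F}^{[d]}$ with $T^{[d]}$ and the minimality of the $\langle T\rangle$-action on $X$.
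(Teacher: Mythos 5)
This statement is quoted from \cite{SY12} and the paper supplies no proof of its own, so your proposal can only be measured against the argument in Shao--Ye. Your overall architecture does match theirs in outline: induction on $d$, the reduction of (1) to showing that every $\mathcal{G}^{[d]}$-orbit closure meets $\Delta^{[d]}(X)$, the splitting of $\{0,1\}^d$ along the last coordinate, and the eventual appeal to minimal idempotents in an enveloping semigroup. The base case $d=1$ and the formal deductions (the $\vec 0$-coordinate factor map, the $\mathcal{G}^{[d]}$-invariance of $\overline{\bigcup_n (T^{[d]})^n M}$) are correct.

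However, there is a genuine gap, and you have located it yourself: the ``collapsing step.'' You propose to bridge it with the assertion that a point of $\mathbf{Q}^{[d]}(X)$ whose lower $(d-1)$-face is constant lies on the diagonal (equivalently, that the fibre of the lower-face projection over a constant cube is diagonal). This assertion is \emph{false} for a general minimal system. Indeed, take $(x,y)\in \mathbf{RP}^{[d-1]}(X)$ with $x\neq y$; by Theorem \ref{cube-minimal} the point $(x,y^{[d]}_*)=(x,y,\ldots,y)$ lies in $\mathbf{Q}^{[d]}(X)$, and applying the Euclidean permutation that flips the $d$-th digit produces a point of $\mathbf{Q}^{[d]}(X)$ whose lower face is the constant cube $y^{[d-1]}$ but whose upper face takes the value $x$ at one vertex and $y$ elsewhere. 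So the fibre over a constant cube is strictly larger than a diagonal copy whenever $\mathbf{RP}^{[d-1]}(X)$ is nontrivial, and the inductive step cannot close along the route you describe. The remaining hard points are likewise deferred rather than proved: the claim that $x^{[d]}$ is $\mathcal{F}^{[d]}$-minimal, and the replacement of the elements $g_k=(T^{[d]})^{a_k}f_k$ by elements of $\mathcal{F}^{[d]}$ in your proof of (3), are both exactly the content of the key lemma extracted from the proof of \cite[Theorem 3.1]{SY12} and quoted in this paper as Lemma \ref{minimal} (an $\mathrm{id}\times T^{[d]}_*$-minimal point is $\mathcal{F}^{[d]}$-minimal); invoking ``a minimal-idempotent argument'' at those places moves the difficulty without resolving it. In short, the skeleton is right but the load-bearing steps are missing, and the one explicit substitute you offer for them is not true.
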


\subsection{Proximality and regionally proximality of higher order}
Let $(X,T)$ be a dynamical system. A pair $(x,y)\in X\times X$ is \emph{proximal} if
\[
\inf_{n\in \Z}\rho(T^nx,T^ny)=0
\]
and \emph{distal} if it is not proximal.
Denote by $\mathbf{P}(X)$ the set of all proximal pairs of $X$.
The dynamical system $(X,T)$ is \emph{distal} if $(x,y)$ is a distal pair whenever $x,y\in X$ are distinct.

An extension $\pi:(X,T)\to (Y,T)$ is \emph{proximal} if $R_{\pi}\subset \mathbf{P}(X)$.
\begin{definition}\label{def-rp}
Let $(X,T)$ be a dynamical system and $d\in \N$.
   The \emph{regionally proximal relation of order $d$} is the relation $\textbf{RP}^{[d]} (X)$
defined by: $(x,y)\in\textbf{RP}^{[d]}(X)$ if
and only if for every $\delta>0$, there
exist $x',y'\in X$ and $\vec{n}\in \Z^d$ such that:
$\rho(x,x')<\delta,\rho(y,y')<\delta$, and
\[
\rho(  T^{\vec{n}\cdot\epsilon} x', T^{\vec{n}\cdot\epsilon}  y')<\delta\;
\text{for all}\; \epsilon\in \{0,1\}^d\backslash\{ \vec{0}\}.
\]

We say $(X,T)$ is a \emph{system of order $d$}
if $\RP^{[d]}(X)$ is trivial.
\end{definition}

It is easy to see that $\mathbf{RP}^{[d]}(X)$ is a closed and invariant relation.
Note that
\[\mathbf{P}(X)\subset
\ldots\subset \mathbf{RP}^{[d+1]}(X)\subset \mathbf{RP}^{[d]}(X)\subset
\ldots \subset\mathbf{RP}^{[2]}(X)\subset \mathbf{RP}^{[1]}(X).
\]

\begin{theorem}\cite{SY12}\label{cube-minimal}
  Let $(X,T)$ be a minimal system and $d\in \N$.
  Then,
  \begin{enumerate}
    \item $(x,y)\in \mathbf{RP}^{[d]}(X)$ if and only if $(x,y,y,\ldots,y)=(x,y^{[d+1]}_*)\in \mathbf{Q}^{[d+1]}(X)$
    if and only if $(x,y,y,\ldots,y)=(x,y^{[d+1]}_*)\in \overline{\mathcal{F}^{[d+1]}}(x^{[d+1]})$.
    \item $\mathbf{RP}^{[d]}(X)$ is an equivalence relation.
  \end{enumerate}
\end{theorem}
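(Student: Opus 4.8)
The plan is to establish the chain of implications
\[
(x,y)\in\mathbf{RP}^{[d]}(X)\ \Longrightarrow\ (x,y^{[d+1]}_*)\in\overline{\mathcal{F}^{[d+1]}}(x^{[d+1]})\ \Longrightarrow\ (x,y^{[d+1]}_*)\in\mathbf{Q}^{[d+1]}(X)\ \Longrightarrow\ (x,y)\in\mathbf{RP}^{[d]}(X),
\]
which yields assertion (1), and then to deduce (2) from (1). The middle implication is nothing but the inclusion $\overline{\mathcal{F}^{[d+1]}}(x^{[d+1]})\subseteq\mathbf{Q}^{[d+1]}(X)$. For the last one I would project onto a hyperface in the new coordinate direction: if $(T^{\vec m\cdot\epsilon}z_k)_{\epsilon}\to(x,y^{[d+1]}_*)$ with $\vec m=(\vec n,l)\in\Z^d\times\Z$, put $x_k'=z_k$ and $y_k'=T^{l}z_k$; then the coordinate $\vec 0$ forces $x_k'\to x$, the coordinate with support $\{d+1\}$ forces $y_k'\to y$, and for each $\epsilon'\in\{0,1\}^d\setminus\{\vec 0\}$ the two coordinates lying over $\epsilon'$ force $T^{\vec n\cdot\epsilon'}x_k'\to y$ and $T^{\vec n\cdot\epsilon'}y_k'\to y$, so that $\rho(T^{\vec n\cdot\epsilon'}x_k',T^{\vec n\cdot\epsilon'}y_k')\to 0$, which is exactly the defining condition of $\mathbf{RP}^{[d]}(X)$ in Definition~\ref{def-rp}.

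The real content is the first implication, which I would prove by induction on $d$. Using the nesting $\mathbf{RP}^{[d]}(X)\subseteq\mathbf{RP}^{[d-1]}(X)$ and the inductive hypothesis, one already knows $(x,y^{[d]}_*)\in\overline{\mathcal{F}^{[d]}}(x^{[d]})$. Passing to the limit in Definition~\ref{def-rp} gives two cubes $A,B\in\mathbf{Q}^{[d]}(X)$ with $A(\vec 0)=x$, $B(\vec 0)=y$ and $A(\epsilon')=B(\epsilon')$ for all $\epsilon'\ne\vec 0$; adjoining to $A$ a translate of itself along a new direction completes this to a point $\mathbf{D}\in\mathbf{Q}^{[d+1]}(X)$ with $\mathbf{D}(\vec 0)=x$, i.e.\ $\mathbf{D}\in\mathbf{Q}^{[d+1]}_x(X)$. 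It then remains to move $\mathbf{D}$ onto $(x,y^{[d+1]}_*)$ inside $\overline{\mathcal{F}^{[d+1]}}(x^{[d+1]})$. For this I would use, on the one hand, Theorem~\ref{property}(2)--(3): $x^{[d+1]}$ is an $\mathcal{F}^{[d+1]}$-minimal point and $\overline{\mathcal{F}^{[d+1]}}(x^{[d+1]})$ is the unique $\mathcal{F}^{[d+1]}$-minimal subset of $\mathbf{Q}^{[d+1]}_x(X)$; and, on the other hand, a minimal idempotent $u$ of the enveloping semigroup $E(\mathbf{Q}^{[d+1]}(X),\mathcal{F}^{[d+1]})$ with $u\,x^{[d+1]}=x^{[d+1]}$. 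Since $u$ belongs to a minimal left ideal, $\overline{\mathcal{F}^{[d+1]}}(u\mathbf{D})$ is $\mathcal{F}^{[d+1]}$-minimal, and since face transformations (hence their limits) fix the $\vec 0$-coordinate, $u\mathbf{D}\in\mathbf{Q}^{[d+1]}_x(X)$; therefore $\overline{\mathcal{F}^{[d+1]}}(u\mathbf{D})=\overline{\mathcal{F}^{[d+1]}}(x^{[d+1]})$, so $u\mathbf{D}\in\overline{\mathcal{F}^{[d+1]}}(x^{[d+1]})$. The delicate point — and the step I expect to be the main obstacle — is to choose $\mathbf{D}$, using the coincidences $A(\epsilon')=B(\epsilon')$, the inductive information on $(x,y^{[d]}_*)$, and the fact that each face transformation acts on every coordinate separately through a power of $T$, so that $u\mathbf{D}$ is forced to be exactly $(x,y^{[d+1]}_*)$ and not some other point of the fibre over $x$; a direct parallelepiped manipulation only achieves this when $(x,y)$ is already proximal, so the structure of $\mathbf{RP}^{[d]}$ must enter here in an essential way.

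Granting (1), assertion (2) is short. Reflexivity is clear, and symmetry is immediate from the symmetric shape of Definition~\ref{def-rp}. For transitivity, suppose $(x,y),(y,z)\in\mathbf{RP}^{[d]}(X)$; by (1) we have $(x,y^{[d+1]}_*)\in\overline{\mathcal{F}^{[d+1]}}(x^{[d+1]})$, and there is a sequence $\psi_k\in\mathcal{F}^{[d+1]}$ with $\psi_k(y^{[d+1]})\to(y,z^{[d+1]}_*)$. Because every face transformation fixes the $\vec 0$-coordinate and acts on each other coordinate $\epsilon$ through a power of $T$ depending only on $\epsilon$, and because $(x,y^{[d+1]}_*)$ and $y^{[d+1]}$ differ only in the $\vec 0$-coordinate, we get $\psi_k(x,y^{[d+1]}_*)=\bigl(x,(\psi_k y^{[d+1]})_*\bigr)\to(x,z^{[d+1]}_*)$. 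Since $\overline{\mathcal{F}^{[d+1]}}(x^{[d+1]})$ is closed and $\mathcal{F}^{[d+1]}$-invariant, $(x,z^{[d+1]}_*)\in\overline{\mathcal{F}^{[d+1]}}(x^{[d+1]})$, and then the easy implications of (1) give $(x,z)\in\mathbf{RP}^{[d]}(X)$, as required.
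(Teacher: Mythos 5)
Your overall architecture is reasonable, and two of the three pieces are correct: the cyclic reduction of (1) to a chain of implications, the projection argument showing that $(x,y^{[d+1]}_*)\in\mathbf{Q}^{[d+1]}(X)$ forces $(x,y)\in\mathbf{RP}^{[d]}(X)$, the trivial inclusion $\overline{\mathcal{F}^{[d+1]}}(x^{[d+1]})\subseteq\mathbf{Q}^{[d+1]}(X)$, and the derivation of transitivity in (2) from (1) via the fact that face transformations fix the $\vec 0$-coordinate are all fine (the last of these is a genuinely clean way to get (2) once (1) is available). Be aware, though, that the paper offers no proof to compare against: Theorem \ref{cube-minimal} is imported wholesale from \cite{SY12}, where it is the main result of an entire article.

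The problem is that the one implication carrying all the content --- $(x,y)\in\mathbf{RP}^{[d]}(X)\Rightarrow(x,y^{[d+1]}_*)\in\overline{\mathcal{F}^{[d+1]}}(x^{[d+1]})$ --- is not proved; you flag the decisive step yourself and leave it open. Concretely: the two limit cubes $A,B\in\mathbf{Q}^{[d]}(X)$ with $A(\vec 0)=x$, $B(\vec 0)=y$ and $A(\epsilon)=B(\epsilon)$ for $\epsilon\neq\vec 0$ do faithfully encode the hypothesis, and the idempotent argument does place $u\mathbf{D}$ in $\overline{\mathcal{F}^{[d+1]}}(x^{[d+1]})$ for any $\mathbf{D}\in\mathbf{Q}^{[d+1]}_x(X)$; but $u$ acts on each coordinate $\epsilon\neq\vec 0$ through some a priori unknown element $u_\epsilon\in E(X,T)$, so nothing forces the $2^{d+1}-1$ starred coordinates of $u\mathbf{D}$ to all equal $y$ rather than some other points in the fibre over $x$. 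Bridging this is exactly where \cite{SY12} has to work hard (several pages, using $\mathcal{F}^{[d]}$-minimality criteria such as Lemma \ref{minimal} and a delicate interplay between the $\mathbf{RP}^{[d]}$ data and the enveloping semigroup of the face group). Moreover, the induction you set up only hands you $(x,y^{[d]}_*)\in\overline{\mathcal{F}^{[d]}}(x^{[d]})$, i.e.\ the $\mathbf{RP}^{[d-1]}$ information, and you never show how the stronger $\mathbf{RP}^{[d]}$ hypothesis is consumed to upgrade it by one dimension; even the base case $d=1$ is already the nontrivial classical characterization of regional proximality. As it stands the proposal establishes the easy equivalences and the deduction of (2) from (1), but not the theorem.
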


The regionally proximal relation of order $d$ allows us to construct the maximal
factor of order $d$
of a minimal system. That is, any factor of order $d$
factorizes through this system.

\begin{theorem}\label{lift-property}\cite{SY12}
  Let $\pi :(X,T)\to (Y,T)$ be the factor map between minimal systems and $d\in \N$. Then,
  \begin{enumerate}
    \item $(\pi \times \pi) \mathbf{RP}^{[d]}(X)=\mathbf{RP}^{[d]}(Y)$.
    \item $(Y,T)$ is a system of order $d$ if and only if $\mathbf{RP}^{[d]}(X)\subset R_\pi$.
  \end{enumerate}

In particular, the quotient of $(X,T)$ under $\mathbf{RP}^{[d]}(X)$
is the maximal factor of order $d$ of $X$.
\end{theorem}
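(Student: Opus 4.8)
The plan is to let everything rest on part (1), from which part (2) and the final assertion are formal consequences. One inclusion of part (1) is immediate. If $(x,y)\in\mathbf{RP}^{[d]}(X)$ and $\delta>0$, use uniform continuity of $\pi$ to pick $\eta>0$ with $\rho(u,v)<\eta\Rightarrow\rho(\pi u,\pi v)<\delta$, take $x',y'$ and $\vec{n}\in\Z^d$ witnessing $(x,y)\in\mathbf{RP}^{[d]}(X)$ at scale $\eta$, and apply $\pi$: since $\pi T=T\pi$, the points $\pi x',\pi y'$ together with the same $\vec{n}$ witness $(\pi x,\pi y)\in\mathbf{RP}^{[d]}(Y)$ at scale $\delta$. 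Thus $(\pi\times\pi)\mathbf{RP}^{[d]}(X)\subseteq\mathbf{RP}^{[d]}(Y)$.

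The substance of part (1) is the reverse inclusion (the lifting property): given $(a,b)\in\mathbf{RP}^{[d]}(Y)$ and $x\in\pi^{-1}(a)$, I must produce $y\in\pi^{-1}(b)$ with $(x,y)\in\mathbf{RP}^{[d]}(X)$. I would translate this into the cubespace language of Theorems \ref{property} and \ref{cube-minimal}. Let $\pi^{[d+1]}\colon X^{[d+1]}\to Y^{[d+1]}$ be the coordinatewise map; it carries $\mathbf{Q}^{[d+1]}(X)$ onto $\mathbf{Q}^{[d+1]}(Y)$ and commutes with every face transformation. By Theorem \ref{property}(2), $\overline{\mathcal{F}^{[d+1]}}(x^{[d+1]})$ is $\mathcal{F}^{[d+1]}$-minimal, so $\pi^{[d+1]}(\overline{\mathcal{F}^{[d+1]}}(x^{[d+1]}))$ is an $\mathcal{F}^{[d+1]}$-minimal subset of $\mathbf{Q}^{[d+1]}_a(Y)$ containing $a^{[d+1]}=\pi^{[d+1]}(x^{[d+1]})$; by the uniqueness in Theorem \ref{property}(3) it equals $\overline{\mathcal{F}^{[d+1]}}(a^{[d+1]})$. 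Since $(a,b)\in\mathbf{RP}^{[d]}(Y)$ gives $(a,b^{[d+1]}_*)\in\overline{\mathcal{F}^{[d+1]}}(a^{[d+1]})$ by Theorem \ref{cube-minimal}(1), there is $\mathbf{w}\in\overline{\mathcal{F}^{[d+1]}}(x^{[d+1]})$ with $\pi^{[d+1]}(\mathbf{w})=(a,b^{[d+1]}_*)$; here $\mathbf{w}(\vec{0})=x$ because the face group fixes the $\vec{0}$-coordinate, and $\pi(\mathbf{w}(\epsilon))=b$ for every $\epsilon\neq\vec{0}$.

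The main obstacle is the final step: the lifted cube $\mathbf{w}$ lies over $(a,b^{[d+1]}_*)$, but its non-base coordinates a priori only lie in the fiber $\pi^{-1}(b)$ and need not coincide, whereas Theorem \ref{cube-minimal}(1) asks for a \emph{degenerate} cube $(x,y^{[d+1]}_*)$ in $\overline{\mathcal{F}^{[d+1]}}(x^{[d+1]})$. Replacing $\mathbf{w}$ by such a degenerate cube is the technical heart of the argument. I would attack it by combining the uniqueness of $\mathcal{F}^{[d+1]}$-minimal subsets with a minimal-idempotent argument in the Ellis semigroup of $(\overline{\mathcal{F}^{[d+1]}}(x^{[d+1]}),\mathcal{F}^{[d+1]})$ that preserves the already-degenerate image $(a,b^{[d+1]}_*)$, or alternatively by an induction on $d$ that glues $(d+1)$-cubes along their hyperfaces; either way one extracts $y\in\pi^{-1}(b)$ with $(x,y^{[d+1]}_*)\in\overline{\mathcal{F}^{[d+1]}}(x^{[d+1]})$, i.e.\ $(x,y)\in\mathbf{RP}^{[d]}(X)$.

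Granting part (1), the rest is formal. For part (2): if $(Y,T)$ has order $d$ then $\mathbf{RP}^{[d]}(Y)=\Delta$, so the easy inclusion of part (1) gives $(\pi\times\pi)\mathbf{RP}^{[d]}(X)\subseteq\Delta$, i.e.\ $\mathbf{RP}^{[d]}(X)\subseteq R_\pi$; conversely, if $\mathbf{RP}^{[d]}(X)\subseteq R_\pi$ then part (1) gives $\mathbf{RP}^{[d]}(Y)=(\pi\times\pi)\mathbf{RP}^{[d]}(X)\subseteq(\pi\times\pi)R_\pi=\Delta$, so $(Y,T)$ has order $d$. Finally, $\mathbf{RP}^{[d]}(X)$ is a closed invariant equivalence relation (Theorem \ref{cube-minimal}(2)), so $Z=X/\mathbf{RP}^{[d]}(X)$ is a minimal system; applying part (2) to the quotient map $X\to Z$ (whose associated relation is exactly $\mathbf{RP}^{[d]}(X)$) shows $Z$ has order $d$, and applying part (2) to any factor map from $X$ onto a system of order $d$ shows it factors through $Z$. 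Hence $Z$ is the maximal factor of order $d$ of $X$.
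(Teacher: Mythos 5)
This theorem is quoted from \cite{SY12}; the paper offers no proof of its own, so your attempt can only be measured against the known argument. Your reduction of part (2) and of the final assertion to part (1) is correct and essentially forced, and the easy inclusion $(\pi\times\pi)\mathbf{RP}^{[d]}(X)\subseteq\mathbf{RP}^{[d]}(Y)$ via uniform continuity is fine. You have also correctly located where all the difficulty sits: after pushing $\overline{\mathcal{F}^{[d+1]}}(x^{[d+1]})$ forward, identifying its image with $\overline{\mathcal{F}^{[d+1]}}(a^{[d+1]})$ by uniqueness of the $\mathcal{F}^{[d+1]}$-minimal set, and lifting $(a,b^{[d+1]}_*)$, you obtain $\mathbf{w}\in\overline{\mathcal{F}^{[d+1]}}(x^{[d+1]})$ whose non-base coordinates lie in the fiber $\pi^{-1}(b)$ but need not agree, and you must replace it by a genuinely degenerate cube $(x,y^{[d+1]}_*)$.

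That replacement, however, is the entire content of the lifting theorem, and you do not carry it out: you name two possible strategies (``a minimal-idempotent argument in the Ellis semigroup \ldots or an induction on $d$ that glues cubes along hyperfaces'') without executing either, and neither works as stated. An element of $E(\overline{\mathcal{F}^{[d+1]}}(x^{[d+1]}),\mathcal{F}^{[d+1]})$ is a limit of maps of the form $(T^{\vec{n}\cdot\epsilon})_{\epsilon}$, which act by \emph{different} powers of $T$ on different coordinates; applying a minimal idempotent therefore has no tendency to collapse the $2^{d+1}-1$ points $\mathbf{w}(\epsilon)$, $\epsilon\neq\vec{0}$, onto a single $y$, and insisting that the image $(a,b^{[d+1]}_*)$ be preserved only keeps those points inside $\pi^{-1}(b)$, which is where they already are. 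The actual proof in \cite{SY12} requires their full structural analysis of $\mathcal{F}^{[d]}$-minimal sets (the machinery behind Theorem \ref{property} and behind the fact that $\mathbf{RP}^{[d]}(X)$ is an equivalence relation) and a delicate induction on $d$; it cannot be dispatched by the one-sentence appeal you make. So the proposal has a genuine gap precisely at the step you yourself flag as ``the technical heart''.
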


It follows that for any minimal system $(X,T)$,
\[
\mathbf{RP}^{[\infty]}(X)=\bigcap_{d\geq1}\mathbf{RP}^{[d]}(X)
\]
is a closed invariant equivalence relation.

Now we formulate the definition of systems of order $\infty$.

\begin{definition}
    A minimal system $(X,T)$ is a \emph{system of order $\infty$},
  if the equivalence relation $\mathbf{RP}^{[\infty]}(X)$ is trivial,
  i.e. coincides with the diagonal.
\end{definition}

Let $(X,T)$ be a dynamical system, set
$\RP^{[d]}[x]=\{y\in X:(x,y)\in \RP^{[d]}(X)\},$
where $d\in \N\cup \{\infty\}$.

\begin{definition}
 Let $(X,T)$ be a minimal system and $d\in \N\cup \{\infty\}$.
 A point $x\in  X$ is called
a \emph{$d$-step almost automorphic point} if $\RP^{[d]}[x] = \{x\}.$

A minimal system $(X,T)$ is called \emph{$d$-step almost automorphic} if
it has a $d$-step almost automorphic point.
 \end{definition}

Almost automorphic systems of higher order were studied systematically in \cite{HSY16}, in particular
we have

\begin{prop}\cite[Theorem 8.13]{HSY16}
 Let $(X,T)$ be a minimal system. Then $(X,T)$ is a
$d$-step almost automorphic system for some $d\in \N\cup \{\infty\}$  if and only if it is an almost
one-to-one extension of its maximal factor of order $d$.
\end{prop}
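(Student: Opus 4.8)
The plan is to phrase both conditions in terms of the single fibre structure of the canonical factor map and then run a short invariance-plus-minimality argument. Fix $d\in\N\cup\{\infty\}$ and let $\pi\colon (X,T)\to (X_d,T)$ be the quotient map onto $X_d=X/\mathbf{RP}^{[d]}(X)$, which by Theorems \ref{cube-minimal} and \ref{lift-property} is the maximal factor of order $d$ (for $d=\infty$ one uses instead that $\mathbf{RP}^{[\infty]}(X)$ is again a closed invariant equivalence relation). Since $\mathbf{RP}^{[d]}(X)$ is exactly the relation $R_\pi$ defining $\pi$, for every $x\in X$ we have $\mathbf{RP}^{[d]}[x]=\pi^{-1}(\pi(x))$. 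First I would record the resulting identification
$$X_0:=\{x\in X:\pi^{-1}(\pi(x))=\{x\}\}=\{x\in X:\mathbf{RP}^{[d]}[x]=\{x\}\},$$
so that $X_0$ is simultaneously the set occurring in the definition of an almost one-to-one extension and the set of $d$-step almost automorphic points.

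With this in hand the two sides of the equivalence read off directly: $(X,T)$ is $d$-step almost automorphic precisely when $X_0\neq\emptyset$, whereas $\pi$ is almost one-to-one precisely when $X_0$ is dense. The implication from almost one-to-one to $d$-step almost automorphic is then immediate, since a dense subset of $X$ is nonempty.

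For the reverse implication the key observation is that $X_0$ is $T$-invariant. Because $\pi$ intertwines the actions and $T$ is a homeomorphism, one has $\pi^{-1}(\pi(Tx))=\pi^{-1}(T\pi(x))=T\,\pi^{-1}(\pi(x))$ for all $x\in X$; hence $\pi^{-1}(\pi(x))=\{x\}$ forces $\pi^{-1}(\pi(Tx))=\{Tx\}$, and the same computation with $T^{-1}$ yields $TX_0=X_0$. Now assume $(X,T)$ is $d$-step almost automorphic and pick $x_0\in X_0$. Then $\mathcal{O}(x_0,T)\subset X_0$, and minimality gives $\overline{\mathcal{O}(x_0,T)}=X$, so $X_0$ is dense and $\pi$ is almost one-to-one. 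This establishes the equivalence for each fixed $d$, and the ``for some $d$'' formulation follows.

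The argument is elementary, so I do not expect a serious obstacle; the only point deserving care is the tacit assertion, built into the definition of ``almost one-to-one'', that $X_0$ is a $G_\delta$ set. I would justify this by the standard fact that $y\mapsto\mathrm{diam}(\pi^{-1}(y))$ is upper semicontinuous on the compact metric space $X_d$, which makes each set $\{y:\mathrm{diam}(\pi^{-1}(y))<1/m\}$ open and exhibits $X_0$ as the countable intersection of their $\pi$-preimages. Density of $X_0$, established above, then means exactly that $\pi$ is almost one-to-one in the sense defined.
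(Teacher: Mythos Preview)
The paper does not supply its own proof of this proposition; it is simply quoted from \cite[Theorem 8.13]{HSY16}. Your argument is correct and is the standard one: identify $\mathbf{RP}^{[d]}[x]$ with the $\pi$-fibre over $\pi(x)$, observe that the set $X_0$ of singleton fibres is $T$-invariant, and use minimality to upgrade ``nonempty'' to ``dense''. There is nothing to compare against here, and no gap in your proposal.
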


\subsection{Independence}
The notion of \emph{independence} was firstly introduced and studied in \cite{KL07}.
It corresponds to a modification of the notion of \emph{interpolating set}
studied in \cite{GW95,HY06}.

\begin{definition}
Let $(X,T)$ be a dynamical system. Given a tuple $\mathcal{A} = (U_0,U_1,\ldots,U_k )$ of subsets
of $X$ we say that a subset $F\subset \Z$  is an \emph{independence set} for $\mathcal{A}$ if for any nonempty
finite subset $ J\subset  F$ and any $s=(s(j):j\in J)\in \{0,1,\ldots,k\}^J$  we have
\[
\bigcap_{j\in J}T^{-j}U_{s(j)}\neq \emptyset.
\]
We shall denote the collection of all independence sets for $\mathcal{A}$ by $\I(U_0,U_1 ,\ldots,U_k )$.
\end{definition}


Now we define $\text{IN}^{[d]}$-pairs.

\begin{definition}\label{def-IN-d}
Let $(X,T)$ be a dynamical system and $d\in \N$.
A pair $(x_0 ,x_1 ) \in X \times X$ is called an \emph{$\text{IN}^{[d]}$-pair}
if for any $k\in \N$ and any neighborhoods $U_0,U_1$ of $x_0$ and $x_1$ respectively,
 there exist integers $p_j^{(i)},1\leq i\leq k,1\leq j\leq d$ such that
\[
\bigcup_{i=1}^k\{ p_1^{(i)}\epsilon(1)+\ldots+p_d^{(i)}\epsilon(d):\epsilon\in \{0,1\}^d\}\backslash \{0\}\subset \I(U_0,U_1).
\]
Denote by $\mathrm{IN}^{[d]} (X)$ the set of all $\mathrm{IN}^{[d]}$-pairs of $(X,T)$.
\end{definition}

\begin{remark}\label{include}
It is easy to see that for a dynamical system,
any $\mathrm{IN}^{[d]}$-pair
is regionally proximal of order $d$,
  sequence entropy pairs coincide with
  $\mathrm{IN}^{[1]}$-pairs
   and any $\I_{fip}$-pair is an $\mathrm{IN}^{[d]}$-pair for every $d\in \N$.
\end{remark}

\subsection{A criterion to be an $\mathrm{IN}^{[d]}$-pair}\label{criterion}
We characterize $\mathrm{IN}^{[d]}$-pairs using
dynamical cubespaces.

Let $d,k\in \N$.
We fix an enumeration,
$\omega_1,\ldots,\omega_{2^d-1}$ of all elements of $\{0,1\}^d\backslash\{\vec{0}\}$.
For $1\leq i\leq k,1\leq j\leq 2^d-1$, let
\[
F_{ij}= \left\{ \epsilon\in \{0,1\}^{k(2^d+d)}:
\begin{gathered}
\epsilon({k(i-1)+j})=1, \text{ and }
\\ \epsilon({k2^d+d(i-1)+s})=\omega_j(s),\;1\leq s\leq d
 \end{gathered}
\right\}.
\]
For $t_j\in \{0,1\}^{2^d-1},1\leq j\leq k$, let $\hat{\theta}=\hat{\theta}(t_1,\ldots,t_k)\in \{0,1\}^{k(2^d+d)}$ such that
\[
\hat{\theta}(n)=
\begin{cases}
 t_i(j), &n=k(i-1)+j,\;1\leq i\leq k,1\leq j\leq 2^d-1; \\
  0,& \mathrm{otherwise},
\end{cases}
\]
for $1\leq a\leq k,1\leq b\leq 2^d-1$, let
$\theta=\theta(t_1,\ldots,t_k,a,b)\in \{0,1\}^{k(2^d+d)}$ such that
\[
\theta(n)=
\begin{cases}
 t_i(j), &n=k(i-1)+j,\;1\leq i\leq k,1\leq j\leq 2^d-1; \\
  \omega_b(s), & n=k2^d+d(a-1)+s, \;1\leq s\leq d;\\
  0,& \mathrm{otherwise}.
\end{cases}
\]

Let
\[
\Theta_{k,d}=\{
\theta=\theta(t_1,\ldots,t_k,a,b):
1\leq a\leq k,1\leq b\leq 2^d-1,
t_j\in \{0,1\}^{2^d-1},1\leq j\leq k\}.
\]

It is easy to check that
$\theta=\theta(t_1,\ldots,t_k,a,b)\in F_{ij}$ if and only if
$a=i,b=j$ and $t_a(b)=1$.

\begin{lemma}\label{INN}
Let $(X,T)$ be a minimal system and
 $d\in \N,x_0,x_1\in X$ with $x_0\neq x_1$.
For any $k\in \N$,
if there is some $\mathbf{x}\in\Q^{[k(2^d+d)]}(X)$ such that
 $\mathbf{x}(\theta)=x_{t_a(b)}$
for any $\theta\in \Theta_{k,d}$,
then $(x_0,x_1)$ is an $\mathrm{IN}^{[d]}$-pair.
\end{lemma}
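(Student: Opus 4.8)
The plan is to unwind Definition~\ref{def-IN-d} directly, extracting the required $\mathrm{IP}_d$-generators from $\mathbf{x}$. Fix $k\in\N$ and let $\mathbf{x}\in\Q^{[D]}(X)$, $D=k(2^d+d)$, be as in the hypothesis. Given neighborhoods $U_0,U_1$ of $x_0$ and $x_1$, pick $\delta>0$ with $\{z\in X:\rho(z,x_i)<\delta\}\subset U_i$ for $i=0,1$. Since $\mathbf{x}$ lies in the closure of the set of parallelepiped words $(T^{\vec m\cdot\epsilon}w:\epsilon\in\{0,1\}^D)$, I would choose $\vec n=(n_1,\dots,n_D)\in\Z^D$ and $y\in X$ with $\rho(T^{\vec n\cdot\epsilon}y,\mathbf{x}(\epsilon))<\delta$ simultaneously for all $\epsilon\in\{0,1\}^D$; in particular this holds for every $\epsilon$ in the finite set $\Theta_{k,d}$. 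I then read the generators off the ``$\omega$-blocks'' of $\vec n$: set $p^{(a)}_s=n_{k2^d+d(a-1)+s}$ for $1\le a\le k$, $1\le s\le d$, and let
\[
S=\bigcup_{a=1}^{k}\bigl\{p^{(a)}_1\epsilon(1)+\dots+p^{(a)}_d\epsilon(d):\epsilon\in\{0,1\}^d\bigr\}\setminus\{0\}.
\]
By Definition~\ref{def-IN-d} it then suffices to show $S\in\I(U_0,U_1)$.

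So take a nonempty finite $J\subset S$ and $s=(s(j):j\in J)\in\{0,1\}^J$. Each $j\in J$ is nonzero, so $j=p^{(a_j)}_1\omega_{b_j}(1)+\dots+p^{(a_j)}_d\omega_{b_j}(d)$ for some $1\le a_j\le k$ and some $1\le b_j\le 2^d-1$. Distinct members of $J$ are distinct integers, so $j\mapsto(a_j,b_j)$ is injective on $J$; hence I can choose $t_1,\dots,t_k\in\{0,1\}^{2^d-1}$ with $t_{a_j}(b_j)=s(j)$ for every $j\in J$ (the remaining coordinates of the $t_i$ being arbitrary). Put $z=T^{\vec n\cdot\hat\theta(t_1,\dots,t_k)}y$.

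The key point is a coordinate comparison: for $j\in J$ the vertex $\theta_j:=\theta(t_1,\dots,t_k,a_j,b_j)$ lies in $\Theta_{k,d}$, and it agrees with $\hat\theta(t_1,\dots,t_k)$ — which vanishes on all $\omega$-coordinates — except on the $a_j$-th $\omega$-block, where $\theta_j$ takes the values $\omega_{b_j}(s)$, $1\le s\le d$. Therefore $\vec n\cdot\theta_j=\vec n\cdot\hat\theta(t_1,\dots,t_k)+j$, so $T^jz=T^{\vec n\cdot\theta_j}y$. By the choice of $(\vec n,y)$ and the hypothesis on $\mathbf{x}$ we get $\rho(T^jz,\mathbf{x}(\theta_j))<\delta$ and $\mathbf{x}(\theta_j)=x_{t_{a_j}(b_j)}=x_{s(j)}$, so $T^jz\in U_{s(j)}$. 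Hence $z\in\bigcap_{j\in J}T^{-j}U_{s(j)}\neq\emptyset$, which proves $S\in\I(U_0,U_1)$; since $k$ and the neighborhoods were arbitrary, $(x_0,x_1)\in\mathrm{IN}^{[d]}(X)$.

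I do not expect a genuine obstacle: the gadget $\Theta_{k,d}$, $\hat\theta$, $\theta$ has been engineered so that the $\omega$-blocks of a parallelepiped word furnish the $\mathrm{IP}_d$-generators, while the choice of the labels $t_i$ realizes the prescribed $s$. The one thing to get right is the index bookkeeping — that the ``$t$-coordinates'' $k(i-1)+j$ and the ``$\omega$-coordinates'' $k2^d+d(a-1)+s$ of $\{0,1\}^D$ are disjoint, so that $\theta_j$ and $\hat\theta$ are well defined and their $\vec n$-pairings differ by exactly $j$ — together with the elementary identities recorded before the statement, in particular that $\theta(t_1,\dots,t_k,a,b)$ determines $(t_1,\dots,t_k,a,b)$ (so that $x_{t_a(b)}$ is unambiguous) and that it lies in $F_{ij}$ exactly when $(a,b)=(i,j)$ and $t_a(b)=1$.
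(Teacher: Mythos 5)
Your proof is correct and follows essentially the same route as the paper's: approximate $\mathbf{x}$ by a parallelepiped word $(T^{\vec n\cdot\epsilon}y)_\epsilon$, read the $\mathrm{IP}_d$-generators off the $\omega$-blocks of $\vec n$, and use the identity $\vec n\cdot\theta=\vec n\cdot\hat\theta+\vec m_a\cdot\omega_b$ together with the hypothesis $\mathbf{x}(\theta)=x_{t_a(b)}$ to exhibit $T^{\vec n\cdot\hat\theta}y$ as the common witness. The only cosmetic difference is that the paper verifies membership in the full intersection $\bigcap_{i,j}T^{-\vec m_i\cdot\omega_j}U_{t_i(j)}$ for each fixed $(t_1,\dots,t_k)$, whereas you spell out the choice of the $t_i$'s realizing a prescribed finite assignment $s$; these are the same argument.
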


\begin{proof}
For $i=0,1$,
let $U_i$ be a neighborhood of $x_i$ and
choose $\delta>0$ with $B(x_i,\delta)=\{y\in X\colon \rho(x_i,y)<\delta\}\subset U_i$.

Let $k\in \N$ and let $\mathbf{x}\in\Q^{[k(2^d+d)]}(X)$ such that
 $\mathbf{x}(\theta)=x_{t_a(b)}$
 for any $\theta\in \Theta_{k,d}$.

By Theorem \ref{property} there exist
$$\vec{n}=(n_1,\ldots,n_{k2^d},m_1^{(1)},\ldots,m_d^{(1)},\ldots,m_1^{(k)},\ldots,m_d^{(k)})\in \Z^{k(2^d+d)},
$$
$n\in \Z $ and $x\in X$ such that
 \begin{equation}\label{belong1}
 \rho(T^{n+\vec{n}\cdot \epsilon}x,\mathbf{x}(\epsilon))<\delta\; \text{for all} \;\epsilon\in \{0,1\}^{k(2^d+d)}.
  \end{equation}

For $1\leq i\leq k$, set $\vec{m}_i=(m_1^{(i)},\ldots,m_d^{(i)})$.
Recall that $\mathbf{x}(\theta)=x_{t_a(b)}$ and
$\vec{n}\cdot\theta= \vec{n}\cdot\hat{\theta}+\vec{m}_a\cdot \omega_{b}$,
thus by (\ref{belong1}) we get that
\[
T^{n+ \vec{n}\cdot\hat{\theta}}x\in
 T^{-\vec{m}_a\cdot \omega_{b}} U_{t_a(b)}.
\]
Moreover,
\[
T^{ n+\vec{n}\cdot\hat{\theta}}x\in
\bigcap_{i=1}^k \bigcap_{j=1}^{2^d-1} T^{-\vec{m}_i\cdot \omega_{j}} U_{t_i(j)},
\]
which implies that
\[
\bigcup_{i=1}^k\{ \vec{m}_i\cdot \epsilon:\epsilon\in \{0,1\}^d\}\backslash \{0\}\subset \I(U_0,U_1).
\]

As $k$ is arbitrary, we
conclude that $(x_0,x_1)$ is an $\mathrm{IN}^{[d]}$-pair.
\end{proof}

\subsection{Nilpotent groups, nilmanifolds and nilsystems}
Let $L$ be a group.
For $g,h\in L$, we write $[g,h]=ghg^{-1}h^{-1}$ for the commutator of $g$ and $h$,
we write $[A,B]$ for the subgroup spanned by $\{[a,b]:a\in A,b\in B\}$.
The commutator subgroups $L_j,j\geq1$, are defined inductively by setting $L_1=L$
and $L_{j+1}=[L_j,L]$.
Let $k\geq 1$ be an integer.
We say that $L$ is \emph{k-step nilpotent} if $L_{k+1}$ is the trivial subgroup.

\medskip

Let $L$ be a $k$-step nilpotent Lie group and $\Gamma$ a discrete cocompact subgroup of $L$.
The compact manifold $X=L/\Gamma$ is called a \emph{k-step nilmanifold.}
The group $L$ acts on $X$ by left translations and we write this action as $(g,x)\mapsto gx$.
Let $\tau\in L$ and $T$ be the transformation $x\mapsto \tau x$ of $X$.
Then $(X,T)$ is called a \emph{k-step nilsystem}.

We also make use of inverse limits of nilsystems and so we recall the definition of an inverse limit
of systems (restricting ourselves to the case of sequential inverse limits).
If $\{(X_i,T_i)\}_{i\in \N}$ are systems with $\text{diam}(X_i)\leq 1$ and $\phi_i:X_{i+1}\to X_i$
are factor maps, the \emph{inverse limit} of the systems is defined to be the compact subset of $\prod_{i\in \N}X_i$
given by $\{(x_i)_{i\in \N}:\phi_i(x_{i+1})=x_i,i\in \N\}$,
which is denoted by $\lim\limits_{\longleftarrow}\{ X_i\}_{i\in \N}$.
It is a compact metric space endowed with the distance $\rho(x,y)=\sum_{i\in \N}1/ 2^i \rho_i(x_i,y_i)$.
We note that the maps $\{T_i\}$ induce a transformation $T$ on the inverse limit.

The following structure theorem characterizes inverse limits of nilsystems using dynamical cubespaces.

\begin{theorem}[Host-Kra-Maass]\cite[Theorem 1.2]{HKM}\label{description}
  Assume that $(X,T)$ is a minimal system
  and let $d\geq2$ be an integer. The following properties are equivalent:
  \begin{enumerate}
    \item If $\mathbf{x},\mathbf{y}\in \mathbf{Q}^{[d]}(X)$ have $2^d-1$ coordinates in common, then $\mathbf{x}=\mathbf{y}$.
    \item If $x,y\in X$ are such that $(x,y,\ldots,y)\in  \mathbf{Q}^{[d]}(X)$, then $x=y$.
    \item The system $(X,T)$ is an inverse limit of $(d-1)$-step minimal nilsystems.
  \end{enumerate}
\end{theorem}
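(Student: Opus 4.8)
This is the Host--Kra--Maass structure theorem, so the plan is not to prove anything new but to reconstruct the cycle of implications $(1)\Rightarrow(2)\Rightarrow(3)\Rightarrow(1)$. The implication $(1)\Rightarrow(2)$ is immediate: if $(x,y^{[d]}_*)\in\mathbf{Q}^{[d]}(X)$, then this tuple and $y^{[d]}\in\mathbf{Q}^{[d]}(X)$ share the $2^d-1$ coordinates indexed by $\{0,1\}^d\setminus\{\vec 0\}$, so $(1)$ forces $x=y$.

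For $(3)\Rightarrow(1)$ I would first treat a single $(d-1)$-step minimal nilsystem $X=L/\Gamma$. There $\mathbf{Q}^{[d]}(X)$ is the orbit of a diagonal point $x^{[d]}$ under an explicit nilpotent subgroup of $L^{[d]}$, the Host--Kra cube group, and since $L$ is nilpotent of class $d-1$ every iterated commutator of length $d$ vanishes; this yields the unique closing parallelepiped property, namely that a point of $\mathbf{Q}^{[d]}(X)$ is determined by any $2^d-1$ of its coordinates, which is precisely $(1)$. The property then passes to an inverse limit coordinate-wise: if $\mathbf{x},\mathbf{y}\in\mathbf{Q}^{[d]}(X)$, with $X$ an inverse limit of such nilsystems $X_i$, agree on $2^d-1$ coordinates, then so do their projections to every $X_i$, whence $\mathbf{x}=\mathbf{y}$.

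The implication $(2)\Rightarrow(3)$ carries all the weight, and I expect it to be the main obstacle. A cheap first step: by Theorem~\ref{cube-minimal}, condition $(2)$ says exactly that $\mathbf{RP}^{[d-1]}(X)=\Delta$, i.e. that $(X,T)$ is a system of order $d-1$; and this already forces $(X,T)$ to be distal, since a proximal pair $(x,y)$ lies in $\mathbf{P}(X)\subset\mathbf{RP}^{[d-1]}(X)$, so that $(x,y^{[d]}_*)\in\mathbf{Q}^{[d]}(X)$ and $(2)$ gives $x=y$. One is thus reduced to proving that a minimal \emph{distal} system of order $d-1$ is an inverse limit of $(d-1)$-step nilsystems, and the plan is to run the Host--Kra tower: build the canonical sequence of order-$k$ factors $X\to\cdots\to Z_2\to Z_1\to\{\mathrm{pt}\}$ (here $X=Z_{d-1}$, since $\mathbf{RP}^{[d-1]}(X)=\Delta$), and show that each extension $Z_k\to Z_{k-1}$ is an isometric (compact group) extension of the kind occurring in nilsystems. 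The genuinely hard point is the last step, coordinatising the fibres so as to turn this finite tower of structured group extensions into an honest presentation of $X$ as an inverse limit of $(d-1)$-step nilmanifolds; this is the technical core of \cite[Theorem~1.2]{HKM}, and once $(X,T)$ is known to be distal the argument is that of \cite{HKM}.
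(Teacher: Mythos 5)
The paper offers no proof of this statement; it is quoted verbatim from Host--Kra--Maass with a citation, so there is nothing internal to compare your sketch against. Your cycle of implications reproduces the architecture of the original argument correctly: $(1)\Rightarrow(2)$ is the right one-line comparison with the diagonal point $y^{[d]}$, $(3)\Rightarrow(1)$ via the Host--Kra cube group of a $(d-1)$-step nilsystem plus a coordinatewise passage to inverse limits is the standard route, and your reduction of $(2)$ to the triviality of $\mathbf{RP}^{[d-1]}(X)$ (hence to distality, via $\mathbf{P}(X)\subset\mathbf{RP}^{[d-1]}(X)$) is correct and consistent with Theorem~\ref{cube-minimal}. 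The only caveat is that the entire content of $(2)\Rightarrow(3)$ --- building the tower of order-$k$ factors, showing each step is a structured group extension, and coordinatising it as an inverse limit of nilmanifolds --- is deferred to \cite{HKM} rather than carried out, so what you have is an accurate outline of where the work lives, not a self-contained proof; for a result the paper itself only cites, that is the appropriate level of detail.
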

This result shows that a minimal system is a system of order $d$
if and only if it is an inverse limit of minimal $d$-step nilsystems.

\begin{theorem}\cite[Theorem 3.6]{DDMSY13}\label{system-of-order}
  A minimal system $(X,T)$
  is a system of order $\infty$ if and only if it is an inverse limit of minimal nilsystems.
\end{theorem}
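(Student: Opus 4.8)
The plan is to prove the two implications separately, the only substantial input being the Host--Kra--Maass structure theorem (Theorem~\ref{description}); granting that, the rest is soft, using the lifting property of $\mathbf{RP}^{[d]}$ (Theorem~\ref{lift-property}) together with routine inverse-limit bookkeeping.

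For the direction ``inverse limit of nilsystems $\Rightarrow$ order $\infty$'', I would argue as follows. Write $(X,T)=\varprojlim(N_i,T_i)$ with $N_i$ a minimal nilsystem of step $k_i$, and let $\pi_i\colon(X,T)\to(N_i,T_i)$ be the canonical onto projection. A minimal $k$-step nilsystem is the inverse limit of the constant sequence of itself, so by Theorem~\ref{description} it is a system of order $k$; hence $\mathbf{RP}^{[k_i]}(N_i)=\Delta$. Now take $(x,y)\in\mathbf{RP}^{[\infty]}(X)=\bigcap_{d\ge1}\mathbf{RP}^{[d]}(X)$. By Theorem~\ref{lift-property}(1), $(\pi_i(x),\pi_i(y))\in\mathbf{RP}^{[d]}(N_i)$ for every $d$; taking $d=k_i$ forces $\pi_i(x)=\pi_i(y)$, and since the $\pi_i$ separate points of the inverse limit, $x=y$. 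Thus $\mathbf{RP}^{[\infty]}(X)=\Delta$, i.e. $(X,T)$ is a system of order $\infty$.

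For the converse, assume $\mathbf{RP}^{[\infty]}(X)=\Delta$. For each $d\in\N$ set $X_d=X/\mathbf{RP}^{[d]}(X)$, which makes sense since $\mathbf{RP}^{[d]}(X)$ is a closed invariant equivalence relation (Theorem~\ref{cube-minimal}(2)); by Theorem~\ref{lift-property} the quotient map $\pi_d\colon X\to X_d$ realizes the maximal factor of order $d$, so $X_d$ is itself a system of order $d$. From $\mathbf{RP}^{[d+1]}(X)\subset\mathbf{RP}^{[d]}(X)$ we obtain canonical factor maps $\phi_d\colon X_{d+1}\to X_d$ with $\phi_d\circ\pi_{d+1}=\pi_d$, and I would then consider the induced map $\Phi\colon X\to\varprojlim_d X_d$, $\Phi(x)=(\pi_d(x))_{d\ge1}$. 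This $\Phi$ is a conjugacy: it is injective because $\Phi(x)=\Phi(y)$ means $(x,y)\in\bigcap_d\mathbf{RP}^{[d]}(X)=\Delta$, and it is onto because, given $(z_d)_d$ in the target, the fibres $C_d=\pi_d^{-1}(z_d)$ are nonempty, closed, and nested (the relation $\phi_d(z_{d+1})=z_d$ gives $C_{d+1}\subset C_d$), so by compactness $\bigcap_d C_d\neq\emptyset$ and any point of it is sent by $\Phi$ to $(z_d)_d$. Finally, by Theorem~\ref{description} each $X_d$ is an inverse limit of minimal $d$-step nilsystems, so plugging these into $\varprojlim_d X_d$ and reindexing diagonally presents $(X,T)$ as a sequential inverse limit of minimal nilsystems.

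The main obstacle is not new to this paper: essentially the whole weight sits on Theorem~\ref{description} (the Host--Kra--Maass structure theorem), which we are entitled to invoke, after which everything is formal. The one spot demanding a little care is the closing reindexing in the converse---checking that a sequential inverse limit of sequential inverse limits of nilsystems can be reorganized, after normalizing the metrics, into a single sequential inverse limit of nilsystems---but this is a standard diagonal argument.
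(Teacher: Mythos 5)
The paper offers no proof of this statement: it is imported verbatim from \cite[Theorem 3.6]{DDMSY13}, so there is no internal argument to compare yours against, and I can only judge the proposal on its own merits. Your first implication is complete: a minimal $k$-step nilsystem is trivially an inverse limit of minimal $k$-step nilsystems, hence a system of order $k$ by Theorem \ref{description} (via Theorem \ref{cube-minimal}(1)), and Theorem \ref{lift-property}(1) gives $\mathbf{RP}^{[k_i]}(X)\subset R_{\pi_i}$, so intersecting over $i$ kills $\mathbf{RP}^{[\infty]}(X)$. The converse is also correct up to and including the identification $X\cong\varprojlim_d X_d$ with $X_d=X/\mathbf{RP}^{[d]}(X)$: the injectivity and nested-fibre surjectivity arguments for $\Phi$ are fine, and each $X_d$ is an inverse limit of minimal $d$-step nilsystems by Theorem \ref{description}.

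The gap is the closing ``standard diagonal argument,'' and the difficulty there is not metric normalization. Writing $X_d=\varprojlim_j N_{d,j}$, the presentations for different $d$ are a priori unrelated: there are no factor maps between $N_{d,j}$ and $N_{d+1,j'}$, so no diagonal subfamily of the $N_{d,j}$ forms an inverse system whose limit is $X$. What you actually have is a countable family of nilsystem factors $r_n\colon X\to M_n$ (an enumeration of all the $N_{d,j}$) which separates points of $X$; the standard repair is to set $Z_n=(r_1,\dots,r_n)(X)\subset M_1\times\dots\times M_n$, so that $X=\varprojlim_n Z_n$ along the coordinate projections. But then you must know that each $Z_n$ is itself a minimal nilsystem, i.e.\ that a minimal subsystem of a finite product of minimal nilsystems is again a nilsystem. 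This rests on the nontrivial theorem that the orbit closure of a point in a nilsystem is a sub-nilmanifold on which the translation acts as a nilsystem, together with the fact that a finite product of nilsystems is a nilsystem (see \cite[Chapter 11]{HK18}). That is the real crux of this direction and cannot be absorbed into inverse-limit bookkeeping; once it is supplied, your argument closes.
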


\section{$k$-regionally proximal relation of higher order}
In this section, we discuss the $k$-regionally proximal relation of high order.

\begin{definition}\label{def-k-rp}
  Let $(X,T)$ be a dynamical system and $d\in \N$.
  For $k\geq  2$, a $k$-tuple $(x_1,\ldots,x_k)\in X^k$
  is said to be \emph{$k$-regionally proximal of order $d$} if for any $\delta>0$,
  there exist $x_i'\in X,1\leq i\leq k$ and $\vec{n}\in \Z^d$
  such that $\rho(x_i,x_i')<\delta,1\leq i\leq k$ and
  \[
 \max_{1\leq i<j\leq k} \rho(T^{\vec{n}\cdot \epsilon}x_i',T^{\vec{n}\cdot \epsilon}x_j')<\delta\;
\text{ for all}\; \epsilon\in \{0,1\}^d\backslash \{\vec{0}\}.
  \]
  \end{definition}

In the proof of the following theorem,
we will use enveloping semigroups in abstract topological dynamical systems.
For more details, see Appendix \ref{sectionA}.

\begin{theorem}\label{main-thm1}
Let $(X,T)$ be a minimal system and let $d,k\in \N$ with $k\geq 2$.
For points $x,x_i\in X,1\leq i\leq k$,
if $(x,x_{i})$ is regionally proximal of order $d$ for all $i$,
then $(x_1,\ldots,x_k)$ is $k$-regionally proximal of order $d$.
\end{theorem}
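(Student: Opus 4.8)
The plan is to build the desired $k$-regionally proximal configuration inside a single dynamical cubespace by iterating the lifting information coming from the $k$ hypotheses $(x,x_i)\in\mathbf{RP}^{[d]}(X)$. First I would recall, from Theorem \ref{cube-minimal}, that $(x,x_i)\in\mathbf{RP}^{[d]}(X)$ is equivalent to $(x,x_i,x_i,\ldots,x_i)=(x,(x_i)^{[d+1]}_*)\in\overline{\mathcal{F}^{[d+1]}}(x^{[d+1]})$. Working in the product system $(X^k,T\times\cdots\times T)$, the natural candidate to analyze is the point $\mathbf{z}=(x^{[d+1]},x^{[d+1]},\ldots,x^{[d+1]})$ (a $k$-fold diagonal element of $(X^k)^{[d+1]}$) together with the point whose $\epsilon$-coordinate is $(x,x,\ldots,x)$ for $\epsilon=\vec 0$ and $(x_1,x_2,\ldots,x_k)$ for every $\epsilon\neq\vec 0$; the goal is to show the latter lies in $\overline{\mathcal{F}^{[d+1]}}(\mathbf z)$, which by Definition \ref{def-k-rp} (applied coordinatewise, after comparing each coordinate to the first) gives exactly that $(x_1,\ldots,x_k)$ is $k$-regionally proximal of order $d$.

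The key step is a successive-replacement argument. Start from $\mathbf w_0=(x^{[d+1]}_*,\ldots,x^{[d+1]}_*)$ viewed inside $(\overline{\mathcal{F}^{[d+1]}}(x^{[d+1]}))^k\subset \mathbf Q^{[d+1]}_{(x,\ldots,x)}(X^k)$. Since $(x,(x_1)^{[d+1]}_*)\in\overline{\mathcal{F}^{[d+1]}}(x^{[d+1]})$ and, by Theorem \ref{property}(3), $\overline{\mathcal{F}^{[d+1]}}(x^{[d+1]})$ is the unique $\mathcal{F}^{[d+1]}$-minimal set in $\mathbf Q^{[d+1]}_x(X)$, minimality of the face action lets me find a sequence in $\mathcal{F}^{[d+1]}$ that pushes the $\vec 0$-fixed element $x^{[d+1]}_*$ of the first coordinate arbitrarily close to $(x_1)^{[d+1]}_*$ while, by a standard compactness/subsequence extraction, controlling what happens simultaneously in the other $k-1$ coordinates; passing to a limit point I can replace the first coordinate block by $(x_1)^{[d+1]}_*$ while keeping all coordinate blocks inside their respective copies of $\overline{\mathcal{F}^{[d+1]}}$ — here one uses that the whole configuration stays in $\mathbf Q^{[d+1]}(X^k)$, which is closed and $\mathcal G^{[d+1]}$-invariant. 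Iterating this for $i=1,2,\ldots,k$, replacing the $i$-th coordinate block's non-$\vec 0$ entries by $(x_i)^{[d+1]}_*$ at stage $i$ while the already-fixed blocks $1,\ldots,i-1$ are carried along inside $\overline{\mathcal{F}^{[d+1]}}(x_1^{[d+1]}),\ldots$ (one must check these earlier blocks are not disturbed, or rather that the limit can be taken so they land on the right diagonal-type points), produces in the limit the sought element of $\mathbf Q^{[d+1]}(X^k)$ whose $\epsilon$-component for $\epsilon\neq\vec0$ is $(x_1,\ldots,x_k)$.

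The main obstacle is precisely the simultaneity in the replacement step: when I use face transformations to move the $i$-th coordinate block onto $(x_i)^{[d+1]}_*$, I have no a priori control over the effect on the other coordinate blocks, so a naive iteration destroys earlier progress. The remedy I expect to need is to work inside the minimal system $(\mathbf Q^{[d+1]}(X^k),\mathcal G^{[d+1]})$ — minimal by Theorem \ref{property}(1) applied to $X^k$ — and exploit its enveloping semigroup: pick a minimal idempotent $u$ in the enveloping semigroup of the face group acting on the relevant orbit closure, use idempotence to fix the "already-correct" coordinates while the hypothesis $(x,(x_i)^{[d+1]}_*)\in\overline{\mathcal{F}^{[d+1]}}(x^{[d+1]})$ together with the characterization in Theorem \ref{cube-minimal} lets $u$ act correctly on the block being adjusted. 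This is the abstract enveloping-semigroup input flagged before the theorem statement; the bookkeeping of which coordinates are "locked" by which idempotent, and verifying the final limit point genuinely sits in $\mathbf Q^{[d+1]}(X^k)$ and has the claimed coordinates, is the part that requires care, while everything else is a routine translation between the cube-membership reformulation of $\mathbf{RP}^{[d]}$ and Definition \ref{def-k-rp}.
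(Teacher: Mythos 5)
There are two genuine obstructions to your plan as written. First, the target configuration is unreachable: with $\mathbf z$ the constant cube at the diagonal point $(x,\dots,x)$ of $X^k$, every face transformation of the product system $(X^k,T\times\cdots\times T)$ applies the same power of $T$ to all $k$ components of each vertex, so the whole orbit closure $\overline{\mathcal{F}^{[d+1]}}(\mathbf z)$ is contained in $D^{[d+1]}$, where $D=\{(y,\dots,y):y\in X\}\subset X^k$; a point whose vertices equal $(x_1,\dots,x_k)$ with the $x_i$ not all equal cannot lie there. Even read as a claim about membership in $\mathbf{Q}^{[d+1]}(X^k)$, your configuration is oriented the wrong way: unwinding definitions, it would produce points $z_1,\dots,z_k$ all close to the single point $x$ whose images $T^{\vec n\cdot\epsilon}z_i$ land near the distinct points $x_i$, whereas Definition \ref{def-k-rp} asks for points $x_i'$ near the distinct $x_i$ whose images $T^{\vec n\cdot\epsilon}x_i'$ cluster together; the configuration you actually need carries $(x_1,\dots,x_k)$ at the vertex $\vec 0$ and diagonal points at every other vertex. (Also, Theorem \ref{property}(1) cannot be invoked for $X^k$: the product of a minimal system with itself is not minimal, the diagonal being a proper closed invariant set.)

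Second, and more fundamentally, the idempotent device cannot repair the interference you correctly identify. Every element of the enveloping semigroup of $\mathcal F^{[d+1]}$ acting on $(X^{[d+1]})^k$ acts by one and the same element of $E(X^{[d+1]},\mathcal F^{[d+1]})$ on each of the $k$ blocks, so no such element, idempotent or not, can fix the blocks already in place while independently moving the next one; a minimal idempotent $u$ satisfies $u\mathbf a=\mathbf a$ only for points $\mathbf a$ adapted to it and is not the identity map. The paper resolves exactly this difficulty by enlarging the dimension rather than passing to the product: it works in $X^{[d+k]}$, reserving one fresh cube direction $d+i$ for each point $x_i$. The semigroup element $\mathbf p_i$ is built from vectors $\vec m^{(l)}$ supported on the directions $1,\dots,d,d+i$, so it is literally the identity on the face $F_i^0=\{\epsilon:\epsilon(j)=0,\ 1\le j\le d,\ \epsilon(d+i)=0\}$, which contains the distinguished vertices $\omega_j=\min F_j^1$ of all the other blocks $j\neq i$; hence the composition $\mathbf p_k\cdots\mathbf p_1x^{[d+k]}$ has $x_i$ at $\omega_i$ and $x$ at every vertex with a nonzero entry among the first $d$ directions, and reading off the $k$ parallel $d$-dimensional subcubes based at the $\omega_i$ yields Definition \ref{def-k-rp}. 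Some device of this kind, producing adjustments with disjoint supports, is the missing idea; without it the successive-replacement scheme does destroy earlier progress, as you suspected.
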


\begin{proof}
Let $d,k\in \N$ with $k\geq 2$.
Fix $x\in X$ and
let $x_i\in \mathbf{RP}^{[d]}[x],1\leq i\leq k$.

We will show that
$(x_1,\ldots,x_k)$ is $k$-regionally proximal of order $d$. 

\medskip

\noindent {\bf Claim 1:}
Let $y\in \RP^{[d]}[x]$ and let
$\mathbf{y}\in X^{[d+1]}$ such that
\[
\mathbf{y}(\epsilon)=
\begin{cases}
y, &  \epsilon=(0,\ldots,0,1);\\
x, &   \hbox{otherwise,}
\end{cases}
\]
then $\mathbf{y} \in\overline{\mathcal{F}^{[d+1]}}(x^{[d+1]})$.

\begin{proof}[Proof of Claim 1]
As $(x,y)\in \RP^{[d]}(X)\subset  \RP^{[d-1]}(X)$,
we have  $(x,y^{[d]}_*)\in \overline{\mathcal{F}^{[d]}}(x^{[d]})$
by Theorem \ref{cube-minimal}.
Notice that $(\overline{\mathcal{F}^{[d]}}(x^{[d]}),\mathcal{F}^{[d]})$
is minimal by Theorem \ref{property},
then
there is some sequence $\{\vec{n}_j\}_{j\in \N}\subset \Z^d$
such that
\begin{equation}\label{limit}
  (T^{\vec{n}_j\cdot \epsilon}:\epsilon\in \{0,1\}^d)(x,y^{[d]}_*)\to x^{[d]},
\end{equation}
as $j\to \infty$.
Let $\sigma$ be the map from $\Z^{d}$ to $\Z^{d+1}$ such that
\[
\vec{n}=(n_1,\ldots,n_{d})\mapsto \sigma(\vec{n})=(n_1,\ldots,n_{d},0).
\]
Again by Theorem \ref{cube-minimal},
$(x,y^{[d+1]}_*)\in \overline{\mathcal{F}^{[d+1]}}(x^{[d+1]})$.
Then by (\ref{limit}) we have
\[
(T^{\sigma( \vec{n}_{j})\cdot \omega}:\omega\in \{0,1\}^{d+1})(x,y^{[d+1]}_*)
\to \mathbf{y},
\]
as $j\to \infty$,
which implies that $\mathbf{y}\in \overline{\mathcal{F}^{[d+1]}}(x^{[d+1]})$.
\end{proof}

For $1\leq i\leq k$ and $s=0,1$, let
\[
F_i^s=\{\epsilon\in\{0,1\}^{d+k}:\epsilon(j)=0,1\leq j\leq d,\;\epsilon(d+i)=s\}.
\]

\noindent {\bf Claim 2:}
For every $1\leq i\leq k$, there is
$\mathbf{p}_i\in E(\overline{\mathcal{F}^{[d+k]}}(x^{[d+k]}),\mathcal{F}^{[d+k]})$ such that
\begin{enumerate}
  \item $\mathbf{p}_i(\epsilon)=\mathrm{id},\epsilon\in F_i^0$;
  \item $\mathbf{p}_i(\epsilon)x=x_i,\epsilon\in F_i^1$;
  \item $\mathbf{p}_i(\epsilon)x=x,\epsilon\in \{0,1\}^{d+k}\backslash F_i^1$.
\end{enumerate}

\begin{proof}[Proof of Claim 2]
Let $i\in \{1,\ldots,k\}$ and let
$\mathbf{a}_i\in X^{[d+1]}$ such that
\[
\mathbf{a}_i(\epsilon)=
\begin{cases}
x_i, &  \epsilon=(0,\ldots,0,1);\\
x, &   \hbox{otherwise,}
\end{cases}
\]
then
 $\mathbf{a}_i\in \overline{\mathcal{F}^{[d+1]}}(x^{[d+1]})$
 by Claim 1.
 Notice that $(\overline{\mathcal{F}^{[d+1]}}(x^{[d+1]}),\mathcal{F}^{[d+1]})$
is minimal,
then there is some sequence $\{\vec{n}^{(l)}=(n_1^{(l)},\ldots,n_{d+1}^{(l)})\}_{l\in \N}\subset \Z^{d+1}$ such that
\begin{equation}\label{convengence}
  (T^{\vec{n}^{(l)}\cdot \epsilon}x:\epsilon\in \{0,1\}^{d+1})\to \mathbf{a}_i,
\end{equation}
as $ l\to\infty$.
For $l\in \N$, let $\vec{m}^{(l)}=(m_1^{(l)},\ldots,m_{d+k}^{(l)})\in \Z^{d+k}$ such that
\[
m_j^{(l)}=
\left\{
  \begin{array}{ll}
   n_j^{(l)}, & \hbox{$j=1,\ldots,d$;} \\
    n_{d+1}^{(l)}, & \hbox{$j=d+i$;}\\
    0,& \hbox{otherwise.}
  \end{array}
\right.
\]
Then by (\ref{convengence}) we have that
\begin{enumerate}
  \item for $\epsilon\in F_i^0,T^{\vec{m}^{(l)}\cdot \epsilon}=T^0=\mathrm{id}$;
  \item for $\epsilon\in F_i^1,T^{\vec{m}^{(l)}\cdot \epsilon}x=T^{n_{d+1}^{(l)}}x\to x_i$, as $l\to \infty$;
  \item for $\epsilon\in \{0,1\}^{d+k}\backslash F_i^1,T^{\vec{m}^{(l)}\cdot \epsilon}x=
  T^{\vec{n}^{(l)}\cdot \tilde{\epsilon}}x\to x$, as $l\to \infty$, where $\tilde{\epsilon}\in \{0,1\}^{d+1}$
  with $\tilde{\epsilon}(i)=\epsilon(i),1\leq i\leq d$ and $\tilde{\epsilon}(d+1)=\epsilon(d+i)$.
\end{enumerate}

Now assume that
\[
(T^{\vec{m}^{(l)}\cdot \epsilon}:\epsilon\in \{0,1\}^{d+k})\to \mathbf{p}_i
\]
in $E(\overline{\mathcal{F}^{[d+k]}}(x^{[d+k]}),\mathcal{F}^{[d+k]})$ pointwise.

It is easy to check that $\mathbf{p}_i$ meets the requirement.
\end{proof}

\medskip

Now let $\mathbf{y}=\mathbf{p}_k\cdots\mathbf{p}_1 x^{[d+k]}$.
For $1\leq i\leq k$, let $\omega_i=\min F_i^1$ and let
\[
F=\{\epsilon\in \{0,1\}^{d+k}:\sum_{j=1}^{d}\epsilon(j)>0\}.
\]

\noindent {\bf Claim 3:}
 $\mathbf{y}\in \overline{\mathcal{F}^{[d+k]}}(x^{[d+k]})$ and
\begin{enumerate}
  \item $\mathbf{y}(\omega_i)=x_i,\; 1\leq i\leq k$;
  \item $\mathbf{y}(\epsilon)=x,\epsilon\in F$.
\end{enumerate}

\begin{proof}[Proof of Claim 3]
Notice that $\omega_i\in F_j^0$ for any $i\neq j$,
thus $\mathbf{p}_j(\omega_i)=\mathrm{id}$ by property (1) of Claim 2.
By property (2) of Claim 2, we have $\mathbf{p}_i(\omega_i)x=x_i$. This shows that
\[
\mathbf{y}(\omega_i)=
\mathbf{p}_k(\omega_i)\cdots \mathbf{p}_1(\omega_i)x=\mathbf{p}_i(\omega_i)x=x_i.
\]

Let $\epsilon\in F$, then $\epsilon\notin \cup_{i=1}^k F_i^1$.
By property (3) of Claim 2, $\mathbf{p}_i(\epsilon)x=x$ for every $i$ and thus
we get that
\[
\mathbf{y}(\epsilon)=\mathbf{p}_k(\epsilon)\cdots\mathbf{p}_1(\epsilon)x=x.
\]

This shows Claim 3.
\end{proof}
Fix $\delta>0$.
As $\mathbf{y}\in \overline{\mathcal{F}^{[d+k]}}(x^{[d+k]})$,
there is some $\vec{m}=(m_1,\ldots,m_{d+k})\in \Z^{d+k}$ such that
for all $\epsilon\in \{0,1\}^{d+k}$,
\begin{equation}\label{inequation}
\rho(T^{\vec{m}\cdot \epsilon}x,\mathbf{y}(\epsilon))<\delta.
\end{equation}

Let $x_i'=T^{\vec{m}\cdot \omega_i}x,1\leq i\leq k$ and $\vec{n}=(m_1,\ldots,m_d)$.
By (\ref{inequation}) and property (1) of Claim 3, for $1\leq i\leq k$, we have
\[
\rho(x_i',x_i)=\rho(T^{\vec{m}\cdot \omega_i}x,\mathbf{y}(\omega_i))<\delta.
\]

For $\epsilon\in\{0,1\}^d\backslash \{\vec{0}\}$,
put $\hat{\epsilon}\in \{0,1\}^{d+k}$ such that
\[
\hat{\epsilon}(j)=
\begin{cases}
\epsilon(j), &  1\leq j\leq d;\\
0, & d+1\leq j\leq d+k,
\end{cases}
\]
then $\hat{\epsilon}+\omega_i\in F$ for $1\leq i\leq k$.
Moreover we have that
\[
\rho(T^{\vec{n}\cdot \epsilon}x_i',x)=
\rho(T^{\vec{n}\cdot \epsilon+\vec{m}\cdot \omega_i}x,x)=\rho(T^{\vec{m}\cdot (\hat{\epsilon}+\omega_i)}x,
\mathbf{y}(\hat{\epsilon}+\omega_i))<\delta ,
\]
by (\ref{inequation}) and property (2) of Claim 3
which implies that $(x_1,\ldots,x_k)$
is $k$-regionally proximal of order $d$.

The proof is completed.
\end{proof}

\section{Independence and minimal nilsystems}
The main goal of this section is to study the $\mathrm{IN}^{[d]}$-pairs in minimal nilsystems.
It turns out that for a minimal nilsystem, any
regionally proximal of order $d$ pair is an $\mathrm{IN}^{[d]}$-pair.
We start by recalling some basic results in nilsystems.
For more details and proofs, see \cite{AGF63,PW}.

\medskip

If $G$ is a nilpotent Lie group,
let $G^0$ denote the connected component of its
unit element $1_G$.
In the sequel, $s\geq2$ is an integer and
$(X=G/\Gamma,T)$ is a minimal $s$-step nilsystem.
We let $\tau$ denote the element of $G$ defining the transformation $T$.
If $(X,T)$ is minimal, let $G'$ be the subgroup of $G$
spanned by $G^0$ and $\tau$ and let $\Gamma'=\Gamma\cap G'$,
then we have that $G=G'\Gamma$.
Thus the system $(X,T)$ is conjugate to the system
$(X',T')$
where $X'=G'/\Gamma'$ and $T'$ is the translation by $\tau$ on $X'$.
Therefore,
without loss of generality, we can restrict to the case $G$ is spanned by $G^0$
and $\tau$.

We fix an enumeration,
$F_1,F_2,\ldots,F_{2^d}$ of all upper faces of $\{0,1\}^d$,
ordered such that codim$(F_i)$ is nondecreasing with $i$.
Then $F_1=\{0,1\}^d$, the upper faces of codimension 1 are $F_2,\ldots,F_{d+1}$.

If $F$ is a face of $\{0,1\}^d$, for $g\in G$ we define
$g^{(F)}\in G^{[d]}$ by
\[
(g^{(F)})(\epsilon)=
\begin{cases}
  g, &  \epsilon\in F; \\
 1_G, &  \epsilon\not\in F.
\end{cases}
\]

Denote by $\mathcal{HK}^{[d]}$ the subgroup of $G^{[d]}$ spanned by
\[
\{g^{(F_i)}:g\in G,\;  1\leq i\leq d+1 \}.
\]

\begin{lemma}\label{rational}\cite[Chapter 12]{HK18}
   $\mathcal{HK}^{[d]}$ is a rational subgroup of $G^{[d]}$.
\end{lemma}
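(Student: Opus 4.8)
The plan is to realize $\mathcal{HK}^{[d]}$ as the subgroup of $G^{[d]}$ generated by finitely many closed \emph{rational} subgroups and then to invoke the stability of rationality under this operation. Throughout, $G^{[d]}=G^{2^{d}}$ is equipped with the cocompact lattice $\Gamma^{[d]}:=\Gamma^{2^{d}}$, and a closed subgroup $H\le G^{[d]}$ is rational precisely when $H\cap\Gamma^{[d]}$ is cocompact in $H$.

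First I would treat the individual generators. For $1\le i\le d+1$ put $H_{i}=\{g^{(F_{i})}:g\in G\}$. The map $g\mapsto g^{(F_{i})}$ is an injective continuous homomorphism of $G$ onto $H_{i}$ whose inverse (projection to any coordinate indexed by an $\epsilon\in F_{i}$) is continuous, so $H_{i}\cong G$ as topological groups; moreover $H_{i}$ is closed in $G^{2^{d}}$, being cut out by the closed conditions that the $\epsilon$-coordinate equals $1_{G}$ for $\epsilon\notin F_{i}$ and that all coordinates indexed by $\epsilon\in F_{i}$ coincide. Under the identification $H_{i}\cong G$, the subgroup $H_{i}\cap\Gamma^{[d]}$ corresponds exactly to $\Gamma$, since $g^{(F_{i})}\in\Gamma^{2^{d}}$ if and only if $g\in\Gamma$. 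As $\Gamma$ is cocompact in $G$, each $H_{i}$ is a rational subgroup of $G^{[d]}$. Since $\mathcal{HK}^{[d]}=\langle H_{1},\dots,H_{d+1}\rangle$ by definition, the lemma then follows from the fact, part of the machinery developed in \cite[Chapter 12]{HK18}, that a subgroup of a nilpotent Lie group generated by finitely many closed rational subgroups is again a closed rational subgroup.

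The step I expect to be the main obstacle is the failure of connectedness. Since $G$ is only assumed to be spanned by $G^{0}$ and the translation element $\tau$, the subgroups $H_{i}$ and hence $\mathcal{HK}^{[d]}$ need not be connected, so one cannot argue purely at the level of Lie algebras by exhibiting a basis of rational vectors for the Lie algebra of $\mathcal{HK}^{[d]}$. One must instead use the versions of the rationality statements valid for arbitrary, not merely connected, closed subgroups --- which is precisely the generality in which \cite{HK18} establish them --- so in practice one controls the finite component group generated by the images $\tau^{(F_{i})}$ in addition to the connected part.

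A more self-contained alternative, avoiding the general principle, would be to first record the commutator identity $[g^{(F)},h^{(F')}]=[g,h]^{(F\cap F')}$ for faces $F,F'$, observe that $\mathrm{codim}(F\cap F')=\mathrm{codim}(F)+\mathrm{codim}(F')$ when $F$ and $F'$ are transversal, and thereby show that $\mathcal{HK}^{[d]}$ is generated by $\{g^{(F)}:F\text{ an upper face of }\{0,1\}^{d},\ g\in G_{\mathrm{codim}(F)}\}$ (with the convention $G_{0}=G$). One would then construct a Mal'cev basis of $G^{[d]}$ adapted to $\Gamma^{[d]}$ whose initial segment spans the Lie algebra of $\mathcal{HK}^{[d]}$, starting from a Mal'cev basis of $G$ adapted simultaneously to $\Gamma$ and to the lower central series $G=G_{1}\supseteq G_{2}\supseteq\cdots$. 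The laborious part of that route is the combinatorial bookkeeping over upper faces and over the terms $G_{j}$; in any case the statement is \cite[Chapter 12]{HK18}.
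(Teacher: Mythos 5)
The paper offers no proof of this lemma at all: it is quoted verbatim from \cite[Chapter 12]{HK18}, so there is nothing internal to compare your argument against. That said, your outline is a faithful reconstruction of how the result is actually established there. The reduction to the generators $H_i=\{g^{(F_i)}:g\in G\}$, the observation that each $H_i$ is a closed subgroup isomorphic to $G$ with $H_i\cap\Gamma^{[d]}$ corresponding to $\Gamma$ (hence rational), and the appeal to the stability of rationality under taking the group spanned by finitely many rational subgroups are all correct and are precisely the ingredients in Host--Kra's treatment; the stability statement is part of their general theory of rational subgroups of (not necessarily connected) nilpotent Lie groups, so invoking it is legitimate in a lemma that is itself cited. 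Your ``self-contained alternative'' is in fact closer to what Chapter 12 of \cite{HK18} literally does: the commutator identity $[g^{(F)},h^{(F')}]=[g,h]^{(F\cap F')}$ together with the additivity of codimension for transversal upper faces leads to an explicit ordered product decomposition of $\mathcal{HK}^{[d]}$ over the upper faces, with the factor attached to a face $F$ ranging in $G_{\mathrm{codim}(F)}$; this simultaneously gives closedness, rationality, and the containment $g^{(F)}\in\mathcal{HK}^{[d]}$ for $g\in G_{\mathrm{codim}(F)}$ that the paper later quotes as Lemma \ref{commutator}. You are also right to flag non-connectedness as the point requiring care, since $G$ is only assumed to be spanned by $G^{0}$ and $\tau$; this is exactly the generality in which \cite{HK18} set up their rationality results, so no gap results from it. In short: the proposal is sound, and the only ``gap'' is the deliberate outsourcing of the rational-generation principle, which is no worse than the paper's own outsourcing of the entire lemma.
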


Lemma \ref{rational} means that $\Gamma^{[d]}\cap \mathcal{HK}^{[d]}$ is cocompact in $\mathcal{HK}^{[d]}$,
allowing us to define an $s$-step nilmanifold
\[
\widetilde{X}^{[d]}=
\frac{\mathcal{HK}^{[d]}}{\Gamma^{[d]}\cap \mathcal{HK}^{[d]}}.
\]

\begin{lemma}\cite[Chapter 12]{HK18}\label{nilmanifold}
The nilmanifold
$\widetilde{X}^{[d]}$
is equal to $\mathbf{Q}^{[d}(X)$.
\end{lemma}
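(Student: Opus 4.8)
The plan is to identify both $\widetilde{X}^{[d]}$ and $\mathbf{Q}^{[d]}(X)$ with the single closed subset $\mathcal{HK}^{[d]}\cdot x_0^{[d]}$ of $X^{[d]}=G^{[d]}/\Gamma^{[d]}$, where $x_0=1_G\Gamma$. By Lemma \ref{rational}, $\Gamma^{[d]}\cap\mathcal{HK}^{[d]}$ is cocompact in $\mathcal{HK}^{[d]}$ and $\mathcal{HK}^{[d]}\Gamma^{[d]}$ is closed in $G^{[d]}$, so the canonical map $\widetilde{X}^{[d]}=\mathcal{HK}^{[d]}/(\Gamma^{[d]}\cap\mathcal{HK}^{[d]})\to X^{[d]}$ is a homeomorphism onto the closed set $\mathcal{HK}^{[d]}\cdot x_0^{[d]}$, with which I henceforth identify $\widetilde{X}^{[d]}$. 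Recall that $\{F_i:1\leq i\leq d+1\}$ consists of $F_1=\{0,1\}^d$ and the $d$ upper hyperfaces; relabelling if necessary I write $F_{j+1}=\{\epsilon\in\{0,1\}^d:\epsilon(j)=1\}$ for $1\leq j\leq d$, which is harmless since $\mathcal{HK}^{[d]}$ depends only on the set $\{F_i\}$.

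For the inclusion $\mathbf{Q}^{[d]}(X)\subseteq\widetilde{X}^{[d]}$ I would establish an explicit formula for the generating cube points: for $\vec{n}=(n_1,\ldots,n_d)\in\Z^d$ and $x=gx_0$ with $g\in G$,
\[
\bigl(T^{\vec{n}\cdot\epsilon}x:\epsilon\in\{0,1\}^d\bigr)
=\Bigl(\prod_{j=1}^{d}(\tau^{n_j})^{(F_{j+1})}\Bigr)\,g^{(F_1)}\cdot x_0^{[d]},
\]
which follows by reading off the $\epsilon$-coordinate on the right, namely $\tau^{n_1\epsilon(1)}\cdots\tau^{n_d\epsilon(d)}gx_0=\tau^{\vec{n}\cdot\epsilon}gx_0$, using only that powers of $\tau$ commute. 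Since $(\tau^{n_j})^{(F_{j+1})}$ and $g^{(F_1)}$ lie in $\mathcal{HK}^{[d]}$ by definition, every generating cube point lies in $\widetilde{X}^{[d]}$; as $\widetilde{X}^{[d]}$ is closed, taking closures gives $\mathbf{Q}^{[d]}(X)\subseteq\widetilde{X}^{[d]}$.

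For the reverse inclusion I would use Theorem \ref{property}: since $(X,T)$ is minimal the system $(\mathbf{Q}^{[d]}(X),\mathcal{G}^{[d]})$ is minimal, and $x_0^{[d]}\in\mathbf{Q}^{[d]}(X)$, so $\mathbf{Q}^{[d]}(X)=\overline{\mathcal{G}^{[d]}\cdot x_0^{[d]}}$. Under the identifications above, the diagonal transformation and the face transformations $T_1^{[d]},\ldots,T_d^{[d]}$ are precisely the left translations by $\tau^{(F_1)},\ldots,\tau^{(F_{d+1})}$, all of which lie in $\mathcal{HK}^{[d]}$; hence $\mathbf{Q}^{[d]}(X)$ is exactly the $\mathcal{G}^{[d]}$-orbit closure of the base point inside the nilmanifold $\widetilde{X}^{[d]}$. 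It then remains to show this orbit closure is all of $\widetilde{X}^{[d]}$, i.e. that the $d+1$ translations $\tau^{(F_i)}$ act minimally on $\widetilde{X}^{[d]}$.

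This last step is the main obstacle and is the technical content of \cite[Chapter 12]{HK18}: one shows that $\mathcal{HK}^{[d]}$ is spanned by its identity component together with the elements $\tau^{(F_1)},\ldots,\tau^{(F_{d+1})}$ — the cube-group counterpart of the standing assumption that $G$ is spanned by $G^0$ and $\tau$ — and then that the images of these elements generate a dense subgroup of the maximal torus factor of $\widetilde{X}^{[d]}$, which reduces to the minimality of the rotation on the maximal torus factor of $(X,T)$. Granting this, $(\widetilde{X}^{[d]},\mathcal{G}^{[d]})$ is a minimal nilsystem, so $\overline{\mathcal{G}^{[d]}\cdot x_0^{[d]}}=\widetilde{X}^{[d]}$, and with the second paragraph we obtain $\mathbf{Q}^{[d]}(X)=\widetilde{X}^{[d]}$. (Alternatively, proving directly that $(\widetilde{X}^{[d]},\mathcal{G}^{[d]})$ is minimal would give $\widetilde{X}^{[d]}=\overline{\mathcal{G}^{[d]}\cdot x_0^{[d]}}\subseteq\mathbf{Q}^{[d]}(X)$, again yielding equality without appealing to Theorem \ref{property}.)
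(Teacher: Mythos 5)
The paper offers no proof of this lemma: it is quoted directly from \cite[Chapter 12]{HK18}, so there is no internal argument to compare yours against; your write-up should therefore be judged against the argument in that reference, with which it is consistent. The parts you actually prove are correct: the identification of $\widetilde{X}^{[d]}$ with the closed subset $\mathcal{HK}^{[d]}\cdot x_0^{[d]}$ of $X^{[d]}$ is legitimate given Lemma \ref{rational} (rationality gives that $\mathcal{HK}^{[d]}\Gamma^{[d]}$ is closed and the canonical map is an embedding), and the factorization $\bigl(T^{\vec{n}\cdot\epsilon}x:\epsilon\bigr)=\bigl(\prod_{j}(\tau^{n_j})^{(F_{j+1})}\bigr)g^{(F_1)}\cdot x_0^{[d]}$ is verified correctly coordinatewise, which yields the inclusion $\mathbf{Q}^{[d]}(X)\subseteq\widetilde{X}^{[d]}$ in full. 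The reverse inclusion, however, is not proved but reduced: whichever of your two routes one takes (via Theorem \ref{property} or directly), everything hinges on the minimality of the $\mathcal{G}^{[d]}$-action, i.e.\ of the translations $\tau^{(F_1)},\ldots,\tau^{(F_{d+1})}$, on the nilmanifold $\widetilde{X}^{[d]}$, and you explicitly grant this; that statement is precisely the technical heart of \cite[Chapter 12]{HK18}. To make the argument self-contained you would still need to carry it out: show $\mathcal{HK}^{[d]}$ is spanned by its identity component together with the $\tau^{(F_i)}$, identify the horizontal torus of $\widetilde{X}^{[d]}$ as the analogous construction over the maximal torus factor $Z$ of $X$, check that the images of the $\tau^{(F_i)}$ generate a dense subgroup there using minimality of the rotation on $Z$, and invoke the Auslander--Green--Hahn/Parry criterion that such a nilsystem is minimal if and only if its horizontal torus rotation is. So: correct skeleton, one inclusion fully established, but the decisive minimality input is cited rather than proved --- which, to be fair, is exactly the status the lemma has in the paper itself.
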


By Lemma \ref{nilmanifold}
we can view $\mathbf{Q}^{[d]}(X)$ as a nilsystem
and it is also $\mathcal{HK}^{[d]}$-invariant.

\begin{lemma}\label{commutator}\cite[Chapter 12]{HK18}
Let $F$ be a face of $\{0,1\}^d$ and let $g\in G_{\mathrm{codim}(F)}$,
then $g^{(F)}\in \mathcal{HK}^{[d]}$.
\end{lemma}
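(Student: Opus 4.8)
The plan is to prove Lemma \ref{commutator} by downward induction on $\mathrm{codim}(F)$, exploiting the fact that the generators $g^{(F_i)}$ with $g\in G$ and $1\le i\le d+1$ (the whole group $G$ on $\{0,1\}^d$ and on the hyperfaces) lie in $\mathcal{HK}^{[d]}$ by definition, and then pushing down to higher-codimension faces by taking commutators. The base case $\mathrm{codim}(F)=0$ means $F=\{0,1\}^d$, where $g^{(F)}=g^{[d]}$ is the diagonal element; since $g\in G_1=G$ this is a generator of $\mathcal{HK}^{[d]}$, so there is nothing to prove. For $\mathrm{codim}(F)=1$, $F$ is a hyperface; if $F$ is upper it is one of $F_2,\ldots,F_{d+1}$ and $g^{(F)}$ is a generator; if $F$ is a lower hyperface, say $F=\{\epsilon:\epsilon(i)=0\}$, one writes $g^{(F)}=g^{[d]}\cdot (g^{-1})^{(F_i')}$ where $F_i'=\{\epsilon:\epsilon(i)=1\}$ is the corresponding upper hyperface, and both factors are generators (here $g\in G_1=G$, so $(g^{-1})^{(F_i')}\in\mathcal{HK}^{[d]}$ as well). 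Hence the claim holds for codimension $\le 1$.

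For the inductive step, suppose the statement holds for all faces of codimension $<k$, and let $F$ have codimension $k\ge 2$ with $g\in G_k$. The key algebraic fact is that $F$ can be written as the intersection $F=F'\cap F''$ of two faces with $\mathrm{codim}(F')=k-1$ and $\mathrm{codim}(F'')=1$, where the defining coordinate of $F''$ is not among those of $F'$. Since $G_k=[G_{k-1},G_1]$, it suffices to treat $g=[a,b]$ with $a\in G_{k-1}$, $b\in G_1=G$ (and then use that $\mathcal{HK}^{[d]}$ is a group to handle products of such commutators). The computation I would carry out is the standard one: for faces $F',F''$ as above with $F=F'\cap F''$, one has the commutator identity
\[
[\,a^{(F')},\,b^{(F'')}\,]=[a,b]^{(F'\cap F'')}=g^{(F)}
\]
in $G^{[d]}$, because the two "face indicator" elements agree with $a$ resp. $b$ exactly on $F'$ resp. $F''$, commute coordinatewise off the intersection (where at least one of them is $1_G$), and on $F'\cap F''$ produce precisely $[a,b]$. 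By the induction hypothesis $a^{(F')}\in\mathcal{HK}^{[d]}$ (since $a\in G_{k-1}=G_{\mathrm{codim}(F')}$), and $b^{(F'')}\in\mathcal{HK}^{[d]}$ by the codimension-$1$ case (since $b\in G=G_{\mathrm{codim}(F'')}$ as $\mathrm{codim}(F'')=1$); since $\mathcal{HK}^{[d]}$ is a subgroup, their commutator $g^{(F)}$ lies in it. Writing a general $g\in G_k$ as a product of commutators $[a,b]$ with $a\in G_{k-1}$, $b\in G$, and using that $(g_1g_2)^{(F)}=g_1^{(F)}g_2^{(F)}$ (the map $g\mapsto g^{(F)}$ is a homomorphism $G\to G^{[d]}$), the general case follows.

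I expect the main obstacle to be the bookkeeping in the commutator identity $[a^{(F')},b^{(F'')}]=[a,b]^{(F'\cap F'')}$: one must verify coordinate by coordinate that, given $F=F'\cap F''$ with the defining coordinate of $F''$ disjoint from those of $F'$, the $\epsilon$-component of the left side is $1_G$ unless $\epsilon\in F'\cap F''$, and equals $[a,b]$ there. The case analysis splits according to whether $\epsilon\in F'$, $\epsilon\in F''$, both, or neither; in the "both" case one gets $[a,b]$, and in every other case at least one entry is $1_G$ so the commutator is trivial. A secondary point is to make sure the decomposition $F=F'\cap F''$ with the required codimensions and coordinate-disjointness always exists for $\mathrm{codim}(F)=k\ge 2$ — this is immediate: if $F=\{\epsilon:\epsilon(i_1)=a_1,\ldots,\epsilon(i_k)=a_k\}$, take $F'$ by dropping the last constraint and $F''=\{\epsilon:\epsilon(i_k)=a_k\}$. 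One should also record that the base generators of $\mathcal{HK}^{[d]}$ already include $g^{(F)}$ for every face $F$ of codimension $\le 1$ and every $g\in G$ (via the diagonal-times-upper-face trick for lower hyperfaces), so that the induction has a genuine foothold at $k=2$.
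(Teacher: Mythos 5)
Your proof is correct, and it is essentially the canonical argument: the paper does not prove this lemma at all but cites it from \cite[Chapter 12]{HK18}, where the proof runs by exactly the induction on codimension you describe, using the coordinatewise identity $[a^{(F')},b^{(F'')}]=[a,b]^{(F'\cap F'')}$ for faces with disjoint defining coordinates, the homomorphism property of $g\mapsto g^{(F)}$, and the diagonal-times-upper-hyperface trick to obtain lower hyperfaces from the listed generators. All the steps you flag as needing verification (the coordinate case analysis, the decomposition $F=F'\cap F''$, and writing a general element of $G_k=[G_{k-1},G]$ as a product of commutators and their inverses) go through as you indicate.
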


The following corollary follows from Lemmas \ref{nilmanifold} and \ref{commutator} immediately.
\begin{cor}\label{invariant}
Let $F$ be a face of $\{0,1\}^d$ and let $g\in G_{\mathrm{codim}(F)}$,
then $g^{(F)}\mathbf{x}\in \Q^{[d]}(X)$
for every $\mathbf{x}\in \Q^{[d]}(X)$.
\end{cor}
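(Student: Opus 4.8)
The final statement to prove is Corollary~\ref{invariant}: if $F$ is a face of $\{0,1\}^d$ and $g\in G_{\mathrm{codim}(F)}$, then $g^{(F)}\mathbf{x}\in\mathbf{Q}^{[d]}(X)$ for every $\mathbf{x}\in\mathbf{Q}^{[d]}(X)$.

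The plan is to deduce this directly from Lemmas~\ref{nilmanifold} and~\ref{commutator}, as the text announces. First I would invoke Lemma~\ref{commutator}: since $g\in G_{\mathrm{codim}(F)}$, the element $g^{(F)}\in G^{[d]}$ lies in the subgroup $\mathcal{HK}^{[d]}$. Next I would recall from Lemma~\ref{nilmanifold} that $\mathbf{Q}^{[d]}(X)$ coincides with the nilmanifold $\widetilde{X}^{[d]}=\mathcal{HK}^{[d]}/(\Gamma^{[d]}\cap\mathcal{HK}^{[d]})$, and from the remark following it that $\mathbf{Q}^{[d]}(X)$ is $\mathcal{HK}^{[d]}$-invariant: the group $\mathcal{HK}^{[d]}$ acts on $\widetilde{X}^{[d]}$ by left translations, and this action preserves $\mathbf{Q}^{[d]}(X)$ under the identification. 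Therefore, for any $\mathbf{x}\in\mathbf{Q}^{[d]}(X)$ we have $g^{(F)}\mathbf{x}\in\mathbf{Q}^{[d]}(X)$, which is exactly the claim.

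In more detail, I would write: fix a face $F$ and $g\in G_{\mathrm{codim}(F)}$, so by Lemma~\ref{commutator} we have $g^{(F)}\in\mathcal{HK}^{[d]}$. By Lemma~\ref{nilmanifold}, $\mathbf{Q}^{[d]}(X)=\widetilde{X}^{[d]}$ as a set, and since $\widetilde{X}^{[d]}$ is the quotient of $\mathcal{HK}^{[d]}$ by a subgroup, left multiplication by any element of $\mathcal{HK}^{[d]}$ maps $\widetilde{X}^{[d]}$ onto itself. Applying this to $g^{(F)}$ gives $g^{(F)}\mathbf{x}\in\mathbf{Q}^{[d]}(X)$ for every $\mathbf{x}\in\mathbf{Q}^{[d]}(X)$, completing the proof.

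There is essentially no obstacle here; the corollary is a formal consequence of the two cited lemmas, and the only point requiring a word of care is making explicit that the identification $\mathbf{Q}^{[d]}(X)=\widetilde{X}^{[d]}$ of Lemma~\ref{nilmanifold} is compatible with the left $\mathcal{HK}^{[d]}$-action, i.e. that $\mathbf{Q}^{[d]}(X)$ is genuinely $\mathcal{HK}^{[d]}$-invariant as stated in the text immediately after Lemma~\ref{nilmanifold}. Given that, the argument is a one-line chain of inclusions, so I would keep the proof to two or three sentences.
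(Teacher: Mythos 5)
Your argument is exactly the one the paper intends: it states that Corollary~\ref{invariant} follows immediately from Lemmas~\ref{nilmanifold} and~\ref{commutator}, and your chain (Lemma~\ref{commutator} gives $g^{(F)}\in\mathcal{HK}^{[d]}$; Lemma~\ref{nilmanifold} identifies $\mathbf{Q}^{[d]}(X)$ with the left-translation-invariant quotient $\mathcal{HK}^{[d]}/(\Gamma^{[d]}\cap\mathcal{HK}^{[d]})$) is precisely that deduction. The proof is correct and matches the paper's approach.
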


Now we are in position to show the main result of this section.
When proving,
we omit the nilpotency class as it is not important.

\begin{theorem}\label{ind-nilsystem}
Let $(X=G/\Gamma,T)$ be a minimal nilsystem.
For $x\in X$ and $g\in G_{d+1}$,
if $x\neq gx$,
then
$(x,gx)$ is an $\mathrm{IN}^{[d]}$-pair.
\end{theorem}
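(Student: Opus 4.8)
The plan is to verify the hypotheses of Lemma \ref{INN}, i.e.\ to produce, for each $k\in\N$, a point $\mathbf{x}\in\Q^{[k(2^d+d)]}(X)$ with $\mathbf{x}(\theta)=x_{t_a(b)}$ for every $\theta\in\Theta_{k,d}$. Write $N=k(2^d+d)$ and $x_0=x$, $x_1=gx$ with $g\in G_{d+1}$. The idea is to start from the constant cube $x^{[N]}\in\Q^{[N]}(X)$ and hit it with suitably chosen elements of the form $g^{(F_{ij})}$, where $F_{ij}\subset\{0,1\}^{N}$ are the faces defined just before Lemma \ref{INN}. First I would record the codimension of $F_{ij}$: it is cut out by one condition of the form $\epsilon(k(i-1)+j)=1$ together with the $d$ conditions $\epsilon(k2^d+d(i-1)+s)=\omega_j(s)$ for $1\le s\le d$; since $\omega_j\neq\vec 0$ at least one of those latter conditions is ``$=1$'', but it is harmless even if $\mathrm{codim}(F_{ij})$ is larger, because $g\in G_{d+1}\subseteq G_{\mathrm{codim}(F_{ij})}$ as soon as $\mathrm{codim}(F_{ij})\le d+1$; one checks directly that $\mathrm{codim}(F_{ij})=d+1$ exactly (the ``$\epsilon(\cdot)=1$'' coordinate is disjoint from the $\omega_j$-block, so the $d+1$ defining equations are independent). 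Hence by Corollary \ref{invariant}, $g^{(F_{ij})}$ maps $\Q^{[N]}(X)$ into itself.

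Next I would define
\[
\mathbf{x}=\Big(\prod_{i=1}^{k}\ \prod_{j=1}^{2^{d}-1} \big(g^{(F_{ij})}\big)^{?}\Big)\,x^{[N]},
\]
but more carefully: for a fixed point $\mathbf{z}\in X^{[N]}$ and a face $F$, the translate $g^{(F)}\mathbf{z}$ changes the $\epsilon$-component precisely when $\epsilon\in F$, replacing $\mathbf{z}(\epsilon)$ by $g\,\mathbf{z}(\epsilon)$. The crucial combinatorial observation, already isolated in the text, is that $\theta=\theta(t_1,\dots,t_k,a,b)\in F_{ij}$ if and only if $a=i$, $b=j$ and $t_a(b)=1$. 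So for a given $\theta$ with parameters $(t_1,\dots,t_k,a,b)$, the only factor among all the $g^{(F_{ij})}$ that acts nontrivially on the $\theta$-component is $g^{(F_{ab})}$, and it acts iff $t_a(b)=1$. Therefore $\mathbf{x}(\theta)=x$ when $t_a(b)=0$ and $\mathbf{x}(\theta)=gx$ when $t_a(b)=1$, i.e.\ $\mathbf{x}(\theta)=x_{t_a(b)}$, exactly as required. I must make sure the product makes sense as a single element of $\mathcal{HK}^{[N]}$ (it does, being a product of generators by Lemma \ref{commutator}), and that applying it to $x^{[N]}$ keeps us in $\Q^{[N]}(X)$ (Corollary \ref{invariant}); here the order of the factors does not affect the components indexed by $\Theta_{k,d}$, since on each such component at most one factor is nontrivial, though I would still fix an order for definiteness.

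Having produced this $\mathbf{x}$, Lemma \ref{INN} applies verbatim (its hypothesis $x_0\ne x_1$ is exactly our assumption $x\ne gx$), and concludes that $(x,gx)$ is an $\mathrm{IN}^{[d]}$-pair. The main obstacle, and the only genuinely non-formal point, is the codimension bookkeeping: one must confirm that every face $F_{ij}$ arising here has codimension exactly $d+1$ (so that $g\in G_{d+1}=G_{\mathrm{codim}(F_{ij})}$ and Corollary \ref{invariant} is applicable) and that the ``iff'' characterization of $\theta\in F_{ij}$ is used correctly to pin down all $2^{N}$ (really, all $\Theta_{k,d}$-indexed) components of $\mathbf{x}$ simultaneously. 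Everything else is a direct substitution into the already-established machinery, and since $k$ is arbitrary we obtain the $\mathrm{IN}^{[d]}$-pair property.
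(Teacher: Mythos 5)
Your proposal is correct and follows essentially the same route as the paper: both construct $\mathbf{x}=\bigl(\prod_{i=1}^{k}\prod_{j=1}^{2^d-1}g^{(F_{ij})}\bigr)x^{[k(2^d+d)]}$, use the codimension-$(d+1)$ count together with Lemma \ref{commutator} and Corollary \ref{invariant} to keep it in $\Q^{[k(2^d+d)]}(X)$, and then use the characterization of when $\theta\in F_{ij}$ to get $\mathbf{x}(\theta)=x_{t_a(b)}$ before invoking Lemma \ref{INN}.
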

\begin{proof}
Let $x\in X$ and
$g\in G_{d+1}$ with $x\neq gx$.
Put $x_0=x$ and $x_1=gx$.
For $i=0,1$,
let $U_i$ be a neighborhood of $x_i$ and
choose $\delta>0$ with $B(x_i,\delta)\subset U_i$.

We fix an enumeration,
$\omega_1,\ldots,\omega_{2^d-1}$ of all elements of $\{0,1\}^d\backslash\{\vec{0}\}$.

Let $k\in \N$. For $1\leq i\leq k,1\leq j\leq 2^d-1$, let
\[
F_{ij}= \left\{ \epsilon\in \{0,1\}^{k(2^d+d)}:
\begin{gathered}
\epsilon({k(i-1)+j})=1, \text{ and }
\\ \epsilon({k2^d+d(i-1)+s})=\omega_j(s),\;1\leq s\leq d
 \end{gathered}
\right\}.
\]


Notice that $F_{ij}$ is a face of $\{0,1\}^{k(2^d+d)}$ of codimension $d+1$ for any $i,j$.
It follows from Lemma \ref{commutator} that $g^{(F_{ij})} \in\mathcal{HK}^{[k(2^d+d)]}$.
Thus by Corollary \ref{invariant} we have that
\begin{equation}\label{eqd}
 \mathbf{x}= \left(\prod_{i=1}^{k}\prod_{j=1}^{2^d-1}g^{(F_{ij})}\right)x^{[k(2^d+d)]}\in \Q^{[k(2^d+d)]}(X).
\end{equation}

Recall that
for any $\theta=\theta(t_1,\ldots,t_k,a,b)\in \Theta_{k,d}$,
we have
$\theta\in F_{ij}$ if and only if
$a=i,b=j$ and $t_a(b)=1$ (see subsection \ref{criterion}),
thus
by (\ref{eqd}) we get that
\[
\mathbf{x}(\theta)=
\left(\prod_{\substack{ 1\leq i\leq k,1\leq j\leq 2^d -1 \\  \theta\in F_{ij} \;}}g\right)x=x_{t_a(b)}.
\]

By Lemma \ref{INN}, we deduce that
$(x,gx)=(x_0,x_1)$
is an $\mathrm{IN}^{[d]}$-pair.
\end{proof}


We refer to \cite{DDMSY13} for the following description of maximal factors of higher order
 of minimal nilsystems.

\begin{lemma}\label{nilfact}
 For $1\leq r\leq s$, if $X_r$ is the maximal factor of order $r$ of $X$,
  then $X_r$ has the form $G/(G_{r+1}\Gamma)$, endowed with the translation by the projection
of $\tau $ on $G/G_{r+1}$.
\end{lemma}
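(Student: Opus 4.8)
The plan is to show that the natural projection $\pi_r\colon X=G/\Gamma\to Y_r:=G/(G_{r+1}\Gamma)$ exhibits $Y_r$ as the quotient of $X$ by $\mathbf{RP}^{[r]}(X)$, after which the conclusion follows from Theorem~\ref{lift-property}. First I would check that $Y_r$ is a genuine minimal $r$-step nilsystem. By standard nilmanifold theory $G_{r+1}$ is a rational normal subgroup of $G$, so $G_{r+1}\Gamma$ is a closed subgroup containing $\Gamma$; thus $Y_r$ is a nilmanifold and $\pi_r$ a factor map. Writing $Y_r=(G/G_{r+1})/\overline{\Gamma}$ with $\overline{\Gamma}$ the image of $\Gamma$, the group $G/G_{r+1}$ is $r$-step nilpotent and the induced dynamics is the translation by the projection $\overline{\tau}$ of $\tau$, so $Y_r$ is an $r$-step nilsystem, and it is minimal as a factor of the minimal system $X$. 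By the remark following Theorem~\ref{description}, a minimal $r$-step nilsystem is a system of order $r$, so Theorem~\ref{lift-property}(2) gives $\mathbf{RP}^{[r]}(X)\subseteq R_{\pi_r}$.

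For the reverse inclusion I would first translate membership in $R_{\pi_r}$ into group language: since $G_{r+1}$ is normal in $G$, a pair $(a,b)\in X\times X$ lies in $R_{\pi_r}$ precisely when $a=g'b$ for some $g'\in G_{r+1}$. Hence it suffices to prove that $(g'b,b)\in\mathbf{RP}^{[r]}(X)$ for every $b\in X$ and every $g'\in G_{r+1}$. By Theorem~\ref{cube-minimal} this is equivalent to $(g'b,b,\ldots,b)=(g'b,b^{[r+1]}_*)\in\mathbf{Q}^{[r+1]}(X)$, and this is the one place where a computation is needed: the constant cube $b^{[r+1]}$ lies in $\mathbf{Q}^{[r+1]}(X)$; the singleton $F_0=\{\vec{0}\}$ is a face of $\{0,1\}^{r+1}$ of codimension $r+1$; and since $g'\in G_{r+1}=G_{\mathrm{codim}(F_0)}$, Corollary~\ref{invariant} gives $(g')^{(F_0)}b^{[r+1]}\in\mathbf{Q}^{[r+1]}(X)$. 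But $(g')^{(F_0)}b^{[r+1]}$ is exactly the point whose $\vec{0}$-coordinate is $g'b$ and whose remaining coordinates all equal $b$, which is what we wanted. Therefore $R_{\pi_r}\subseteq\mathbf{RP}^{[r]}(X)$.

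Combining the two inclusions yields $R_{\pi_r}=\mathbf{RP}^{[r]}(X)$, so by Theorem~\ref{lift-property} the system $Y_r=X/\mathbf{RP}^{[r]}(X)$ is the maximal factor of order $r$ of $X$; it has group $G/(G_{r+1}\Gamma)$ and dynamics the translation induced by $\tau$, as asserted. The argument is mostly formal given the apparatus of the earlier sections; the point requiring genuine care is the matching of the codimension of the face $\{\vec{0}\}\subset\{0,1\}^{r+1}$ with the lower-central-series index, namely that this codimension equals $r+1$, which is precisely what permits Corollary~\ref{invariant} to be applied with an element of $G_{r+1}$. I expect this bookkeeping, together with the standard but not entirely trivial fact that $G_{r+1}\Gamma$ is closed, to be the only delicate steps.
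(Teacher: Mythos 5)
Your argument is correct. Note that the paper gives no proof of this lemma at all --- it simply cites \cite{DDMSY13} --- so there is nothing internal to compare against; your write-up supplies the standard argument, which is essentially the one used in the cited reference. Both inclusions check out: the forward one because $G/(G_{r+1}\Gamma)$ is a minimal $r$-step nilsystem, hence a system of order $r$ by the remark after Theorem~\ref{description}, so Theorem~\ref{lift-property}(2) applies; and the reverse one because $\{\vec{0}\}$ is indeed a face of $\{0,1\}^{r+1}$ of codimension $r+1$, the paper's Lemma~\ref{commutator} (and hence Corollary~\ref{invariant}) is stated for arbitrary faces rather than only upper ones, and $(g')^{(\{\vec{0}\})}b^{[r+1]}=(g'b,b^{[r+1]}_*)$, which combined with Theorem~\ref{cube-minimal} gives $R_{\pi_r}\subseteq\mathbf{RP}^{[r]}(X)$. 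The two facts you flag as needing care --- normality and rationality of $G_{r+1}$, so that $G_{r+1}\Gamma$ is a closed subgroup and membership in $R_{\pi_r}$ translates to left translation by an element of $G_{r+1}$ --- are indeed the only nontrivial background inputs, and they are standard for lattices in nilpotent Lie groups.
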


The following corollary follows from Theorem \ref{ind-nilsystem} and Lemma \ref{nilfact} immediately.

\begin{cor}\label{nil-rp}
  Let $(X,T)$ be a minimal nilsystem and $d\in \N$.
    Then $(x_0,x_1)\in\mathrm{IN}^{[d]}(X)$
if and only if $(x_0,x_1)\in \RP^{[d]}(X)$.
\end{cor}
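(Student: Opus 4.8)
The final statement to prove is Corollary \ref{nil-rp}: for a minimal nilsystem $(X,T)$ and $d\in\N$, a pair $(x_0,x_1)$ lies in $\mathrm{IN}^{[d]}(X)$ if and only if it lies in $\RP^{[d]}(X)$.

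The plan is to prove the two inclusions separately, and to reduce everything to the already-established Theorem \ref{ind-nilsystem} and the description of maximal factors in Lemma \ref{nilfact}. The direction $\mathrm{IN}^{[d]}(X)\subset\RP^{[d]}(X)$ is free: it holds for every dynamical system by Remark \ref{include}, so nothing new is needed here. For the reverse inclusion, suppose $(x_0,x_1)\in\RP^{[d]}(X)$. Writing $X=G/\Gamma$ with $G$ spanned by $G^0$ and $\tau$ as arranged in Section 4, I would first produce a group element realizing the pair: by Lemma \ref{nilfact}, the maximal factor of order $d$ is $X_d=G/(G_{d+1}\Gamma)$, so two points of $X$ have the same image in $X_d$ exactly when they lie in the same $G_{d+1}$-orbit. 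By Theorem \ref{lift-property} (the lifting property), $\RP^{[d]}(X)=R_{\pi}$ where $\pi:X\to X_d$ is the factor map onto a system of order $d$ — indeed, since $X_d$ is the maximal factor of order $d$, $\RP^{[d]}(X_d)$ is trivial and $\RP^{[d]}(X)\subset R_\pi$, while conversely $R_\pi\subset\RP^{[d]}(X)$ because $\pi$ collapses $\RP^{[d]}$. Hence $(x_0,x_1)\in\RP^{[d]}(X)$ forces $x_0$ and $x_1$ to have the same projection in $G/G_{d+1}$, i.e. there is $g\in G_{d+1}$ with $x_1=gx_0$.

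Now apply Theorem \ref{ind-nilsystem} directly to the element $g\in G_{d+1}$ and the point $x=x_0$: since $x_0\neq x_1=gx_0$ (we may assume the pair is nontrivial; the diagonal lies in both relations trivially), the conclusion of that theorem is precisely that $(x_0,gx_0)=(x_0,x_1)$ is an $\mathrm{IN}^{[d]}$-pair. This closes the loop, giving $\RP^{[d]}(X)\subset\mathrm{IN}^{[d]}(X)$ and hence equality.

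The only real content beyond quoting earlier results is the identification $\RP^{[d]}(X)=\{(x,gx):g\in G_{d+1},x\in X\}$, and I expect that to be the main (mild) obstacle: one must be careful that the maximal factor of order $d$ computed group-theoretically in Lemma \ref{nilfact} genuinely coincides with $X/\RP^{[d]}(X)$ from Theorem \ref{lift-property}, and that the reduction to $G=\langle G^0,\tau\rangle$ does not lose any $\RP^{[d]}$-pairs (it does not, since that reduction is via a conjugacy, which preserves $\RP^{[d]}$). Everything else is a direct citation: Remark \ref{include} for one inclusion, and the chain Lemma \ref{nilfact} $\Rightarrow$ existence of $g\in G_{d+1}$ $\Rightarrow$ Theorem \ref{ind-nilsystem} for the other. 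I would also remark that the statement extends to inverse limits of minimal nilsystems by passing to coordinates, though that is not needed for the corollary as stated.
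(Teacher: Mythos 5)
Your proof is correct and follows exactly the route the paper intends: the paper gives no written proof but states that the corollary ``follows from Theorem \ref{ind-nilsystem} and Lemma \ref{nilfact} immediately,'' and your argument (Remark \ref{include} for one inclusion; Lemma \ref{nilfact} plus Theorem \ref{lift-property} to realize any nontrivial $\RP^{[d]}$-pair as $(x_0,gx_0)$ with $g\in G_{d+1}$, then Theorem \ref{ind-nilsystem}) is precisely the filled-in version of that. The only detail worth flagging is that identifying ``same image in $G/(G_{d+1}\Gamma)$'' with ``same $G_{d+1}$-orbit under left translation'' uses the normality of $G_{d+1}$ in $G$, which is automatic for the lower central series.
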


\begin{cor}\label{inverse-limit}
 Let $(X,T)$ be an inverse limit of minimal nilsystem and $d\in \N$.
     Then $(x_0,x_1)\in\mathrm{IN}^{[d]}(X)$
if and only if $(x_0,x_1)\in \RP^{[d]}(X)$.
\end{cor}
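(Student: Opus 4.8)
The plan is to reduce the statement for an inverse limit $X = \varprojlim X_i$ of minimal nilsystems to the already-established nilsystem case (Corollary \ref{nil-rp}) by analysing how both the relation $\mathrm{IN}^{[d]}$ and the relation $\mathbf{RP}^{[d]}$ behave under factor maps and inverse limits. For the "only if" direction there is nothing new: by Remark \ref{include} any $\mathrm{IN}^{[d]}$-pair is regionally proximal of order $d$, so $\mathrm{IN}^{[d]}(X)\subset \mathbf{RP}^{[d]}(X)$ holds for any system. The work is all in the "if" direction: given $(x_0,x_1)\in\mathbf{RP}^{[d]}(X)$ we must produce, for each $k\in\N$ and each pair of neighborhoods $U_0,U_1$, the required union of $k$ many $\mathrm{IP}_d$-sets inside $\I(U_0,U_1)$.

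The key tool is the cube-criterion of Lemma \ref{INN}: it suffices to find, for every $k$, a point $\mathbf{x}\in\mathbf{Q}^{[k(2^d+d)]}(X)$ with $\mathbf{x}(\theta)=x_{t_a(b)}$ for all $\theta\in\Theta_{k,d}$. So I would first fix $k$ and write $N=k(2^d+d)$. Let $\pi_i:X\to X_i$ be the projection to the $i$-th nilfactor; since $(x_0,x_1)\in\mathbf{RP}^{[d]}(X)$, Theorem \ref{lift-property}(1) gives $(\pi_i x_0,\pi_i x_1)\in\mathbf{RP}^{[d]}(X_i)$ for every $i$. By Corollary \ref{nil-rp} applied to the minimal nilsystem $X_i$, the pair $(\pi_i x_0,\pi_i x_1)$ is an $\mathrm{IN}^{[d]}$-pair of $X_i$; moreover, tracing the proof of Theorem \ref{ind-nilsystem}, in each $X_i$ one actually obtains a cube point $\mathbf{x}^{(i)}\in\mathbf{Q}^{[N]}(X_i)$ satisfying $\mathbf{x}^{(i)}(\theta)=(\pi_i x)_{t_a(b)}$ for all $\theta\in\Theta_{k,d}$ — indeed $\mathbf{x}^{(i)}=\big(\prod_{a,b}g_i^{(F_{ab})}\big)(\pi_i x)^{[N]}$ for a suitable $g_i\in (G^{(i)})_{d+1}$. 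The functoriality of the cubespace construction, $\mathbf{Q}^{[N]}(X)=\varprojlim \mathbf{Q}^{[N]}(X_i)$ and $(\phi_i)^{[N]}\mathbf{Q}^{[N]}(X_{i+1})=\mathbf{Q}^{[N]}(X_i)$, then lets me assemble the $\mathbf{x}^{(i)}$ into a single $\mathbf{x}\in\mathbf{Q}^{[N]}(X)$ with $\mathbf{x}(\theta)=x_{t_a(b)}$ for all $\theta\in\Theta_{k,d}$, provided the coordinate-wise choices are made compatibly along the inverse system. Feeding this $\mathbf{x}$ into Lemma \ref{INN} yields that $(x_0,x_1)$ is an $\mathrm{IN}^{[d]}$-pair, and since $k$ was arbitrary we are done.

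The main obstacle is the compatibility of the cube points $\mathbf{x}^{(i)}$ along the bonding maps $\phi_i:X_{i+1}\to X_i$: the point $\mathbf{x}^{(i)}$ depends on the choice of group element $g_i\in(G^{(i)})_{d+1}$ with $g_i\cdot\pi_i x=\pi_i x_1$, and a priori the $\phi_i$-image of a valid choice in $X_{i+1}$ need not be the chosen one in $X_i$. I would handle this either by using that $\phi_i$ is induced by a surjective Lie-group homomorphism $G^{(i+1)}\to G^{(i)}$ carrying $(G^{(i+1)})_{d+1}$ onto $(G^{(i)})_{d+1}$, so that one can choose the $g_i$ coherently by inverse-limit/compactness argument on the (nonempty, closed) sets of admissible lifts; or, more cleanly, by the abstract route: $\mathbf{Q}^{[N]}(X)$ maps onto each $\mathbf{Q}^{[N]}(X_i)$ with the fibre over a point being exactly the inverse limit of the fibres, so the closed nonempty conditions "$\mathbf{x}(\theta)=x_{t_a(b)}$ for all $\theta\in\Theta_{k,d}$" cut out a nonempty closed subset of $\mathbf{Q}^{[N]}(X)$ because they do so in every $X_i$. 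This second formulation avoids group bookkeeping entirely and is the version I would write up. A minor point to check along the way is that $x_0\neq x_1$ forces $\pi_i x_0\neq\pi_i x_1$ for all large $i$, which is immediate from $x=(\pi_i x)_i$.
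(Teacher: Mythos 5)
Your proposal is correct and follows essentially the same route as the paper: project to each nilfactor $X_i$, invoke Theorem \ref{ind-nilsystem} (via Lemma \ref{nilfact}) to obtain cube points $\mathbf{x}_i\in\mathbf{Q}^{[k(2^d+d)]}(X_i)$ with the prescribed $\Theta_{k,d}$-coordinates, and assemble them through the inverse-limit structure of the cubespaces into a single cube point of $X$ feeding Lemma \ref{INN}. The compatibility issue you flag is resolved in the paper exactly by your ``second formulation'': one lifts each $\mathbf{x}_i$ arbitrarily to $\widetilde{\mathbf{x}}_i\in\mathbf{Q}^{[k(2^d+d)]}(X)$ and passes to a limit point, the nested closed conditions guaranteeing the limit has the right coordinates.
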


\begin{proof}
Let $(x_0,x_1)\in \mathbf{RP}^{[d]}(X)\backslash\Delta_X $.
By the definition of $\mathrm{IN}^{[d]}$-pairs,
it suffices to show that $(x_0,x_1)\in \mathrm{IN}^{[d]}(X)$.

Assume that $X_i$ is a minimal nilsystem for every $i\in \N$,
set $X=\lim\limits_{\longleftarrow}\{X_i\}_{i\in \N}$
and assume that $\pi_i :X\to X_i$ and $\pi_{i,j}:X_j\to X_i$ are the factor maps.

Set $x_s^i=\pi_i(x_s),i\in \N,s=0,1$,
then $\pi_{i,j}(x_s^j)=x_s^i$
and
there is some $n\in \N$ such that $x_0^n\neq x_1^n$.
For $j\geq n$ and $s=0,1$, we have
$x_s^n=\pi_{n,j}(x_s^j)$ which implies $x_0^j\neq x_1^j$.
Without loss of generality, we may assume $x_0^i\neq x_1^i$ for all $i\in \N$.

Let $k\in \N$.
It follows from Theorem \ref{lift-property} that $(x_0^i,x_1^i)\in \RP^{[d]}(X_i) $
for all $i\in \N$.
By Theorem \ref{ind-nilsystem} and Lemma \ref{nilfact},
for every $i\in \N$ there exists some $\mathbf{x}_i\in\Q^{[k(2^d+d)]}(X_i)$ such that
 $\mathbf{x}_i(\theta)=x_{t_a(b)}^i$
for all $\theta \in \Theta_{k,d}$. 
Notice that $\Q^{[k(2^d+d)]}(X)$ is an inverse limit of the sequence $\{\Q^{[k(2^d+d)]}(X_i)\}_{i\in \N}$,
thus for every $i\in \N$ there exists some $\widetilde{\mathbf{x}}_i\in\Q^{[k(2^d+d)]}(X)$ such that
\[
\pi_i^{[k(2^d+d)]}(\widetilde{\mathbf{x}}_i)=\mathbf{x}_i.
\]
Without loss of generality, assume that $\widetilde{\mathbf{x}}_i \to \mathbf{x}$ as $i\to \infty$
for some $\mathbf{x}\in \Q^{[k(2^d+d)]}(X)$.

We claim that $\mathbf{x}(\theta)=x_{t_a(b)}$ for all $\theta\in \Theta_{k,d}$.

Actually, for any $\theta \in \Theta_{k,d}$ and $i\leq j$ we have
\[
\pi_i(\widetilde{\mathbf{x}}_j(\theta))=\pi_{i,j}\circ \pi_j(\widetilde{\mathbf{x}}_j(\theta))
=\pi_{i,j}( \mathbf{x}_j(\theta))=\pi_{i,j}( x_{t_a(b)}^j)=x_{t_a(b)}^i.
\]
By letting $j$ go to infinity and the continuity of $\pi_i$,
we get $\pi_i(\mathbf{x}(\theta))=x_{t_a(b)}^i$ for all $i\in\N$.
This shows the claim
and thus
$(x_0,x_1)\in \mathrm{IN}^{[d]}(X)$ by Lemma \ref{INN}.

This completes the proof.
\end{proof}

\section{The structure of minimal systems without nontrivial $\mathrm{IN}^{[d]}$-pairs}

In this section we discuss the structure of minimal systems without nontrivial
$\mathrm{IN}^{[d]}$-pairs. We will show that such systems are almost one-to-one extensions of
their maximal factors of order $d$.

We start from the following useful lemma which can be found in the proof of \cite[Theorem 3.1]{SY12}.

\begin{lemma}\label{minimal}
  Let $(X,T)$ be a dynamical system and $d\in \N$.
  If $\mathbf{x}\in X^{[d]}$ is an $\mathrm{id }\times T^{[d]}_*$-minimal point,
  then it is an $\mathcal{F}^{[d]}$-minimal point.
\end{lemma}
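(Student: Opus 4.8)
I would first reduce the statement to: $\mathbf{x}$ lies in an $\mathcal{F}^{[d]}$-minimal subset of $N:=\overline{\mathcal{F}^{[d]}}(\mathbf{x})$. Such a subset $M$ exists by Zorn's lemma, and if $\mathbf{x}\in M$ then $M$, being closed, $\mathcal{F}^{[d]}$-invariant and containing $\mathbf{x}$, contains $\overline{\mathcal{F}^{[d]}}(\mathbf{x})=N$; hence $M=N$ is minimal, which is the assertion. So from now on I fix such an $M$ and a point $\mathbf{y}\in M$ and aim to show $\mathbf{x}\in M$.

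The one structural input is that $S:=\mathrm{id}\times T^{[d]}_{*}$ commutes with every face transformation $T^{[d]}_{j}$. This is immediate from the coordinatewise description of the $T^{[d]}_{j}$: both $S$ and each $T^{[d]}_{j}$ act on every coordinate of $X^{[d]}$ by either $T$ or $\mathrm{id}$, so they commute. Since $S$ is a homeomorphism, passing to limits shows that $S$ commutes with every element of the enveloping semigroup $E:=E(X^{[d]},\mathcal{F}^{[d]})$, and in fact the identity $S^{n}q=qS^{n}$ holds for all $q\in E$ and $n\in\Z$ (not merely in the limit). Together with the hypothesis that $\mathbf{x}$ is $S$-minimal --- equivalently, that $\mathbf{x}$ is fixed by some minimal idempotent of the enveloping semigroup of the $\Z$-action generated by $S$ --- this is the mechanism that should let one ``undo'' any element of $E$ that carries $\mathbf{x}$ towards $M$.

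Concretely, I would argue as follows. Since $\mathbf{y}$ is an $\mathcal{F}^{[d]}$-minimal point there is a minimal left ideal $L$ of $E$ with $L\mathbf{y}=\overline{\mathcal{F}^{[d]}}(\mathbf{y})=M$, and a minimal idempotent $u\in L$ with $u\mathbf{y}=\mathbf{y}$; moreover $M$, being a minimal subset of $E\mathbf{x}=N$, has the form $L\mathbf{x}$, so we may write $\mathbf{y}=p\mathbf{x}$ with $p\in L$. It is then enough to produce $q\in E$ with $q\mathbf{y}=\mathbf{x}$, since this gives $\mathbf{x}\in E\mathbf{y}=M$. Using the $S$-minimality of $\mathbf{x}$, choose a net $(n_{i})$ with $S^{n_{i}}\mathbf{x}\to\mathbf{x}$; pass to a subnet along which the operators $S^{n_{i}}up$ converge, and use the identities $S^{n_{i}}up=upS^{n_{i}}=uS^{n_{i}}p$ together with $u\mathbf{y}=\mathbf{y}$, $p\mathbf{x}=\mathbf{y}$ and $u^{2}=u$ to extract from the limit an element $q\in E$ with $q\mathbf{y}=\mathbf{x}$.

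The step I expect to be the main obstacle is precisely this last extraction. Elements of $E$ are discontinuous, so one cannot naively pass the limit $S^{n_{i}}\mathbf{x}\to\mathbf{x}$ through $p$ or $u$; and the limit of $S^{n_{i}}up$ a priori lives in the enveloping semigroup of the larger abelian group $\langle\mathcal{F}^{[d]},S\rangle$ rather than in $E$, so one must argue that composing appropriately with the idempotent $u$ --- which acts as the identity on the minimal set $M\ni\mathbf{y}$ --- both returns the result to $E\mathbf{y}=M$ and makes it equal to $\mathbf{x}$. This bookkeeping should work precisely because $S$ is a homeomorphism commuting with $\mathcal{F}^{[d]}$, so its powers converge genuinely and slide past the idempotent on either side. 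An alternative route is induction on $d$: splitting $X^{[d]}=X^{[d-1]}\times X^{[d-1]}$ along the last coordinate turns $S$ into $(\mathrm{id}\times T^{[d-1]}_{*})\times T^{[d-1]}$ and $\mathcal{F}^{[d]}$ into the group generated by $\{g\times g:g\in\mathcal{F}^{[d-1]}\}$ and $\mathrm{id}\times T^{[d-1]}$, so that by the induction hypothesis the first $X^{[d-1]}$-coordinate of $\mathbf{x}$ is already $\mathcal{F}^{[d-1]}$-minimal and one is reduced to minimality of the extension $N\to\overline{\mathcal{F}^{[d-1]}}(\mathbf{x}_{0})$; but verifying that reduction seems to call for the same enveloping-semigroup argument, so I would pursue the first approach.
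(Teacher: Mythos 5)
Your reduction to ``$\mathbf{x}$ lies in an $\mathcal{F}^{[d]}$-minimal subset of $N=\overline{\mathcal{F}^{[d]}}(\mathbf{x})$'' is fine, as are the setup $M=L\mathbf{x}$, $\mathbf{y}=p\mathbf{x}$, $u\mathbf{y}=\mathbf{y}$ and the observation that $S=\mathrm{id}\times T^{[d]}_*$ commutes with every element of $E=E(X^{[d]},\mathcal{F}^{[d]})$. But the extraction step that you yourself flag as the main obstacle is a genuine gap, and the mechanism you propose does not close it. From $S^{n_i}\mathbf{x}\to\mathbf{x}$ and $S^{n_i}up=upS^{n_i}$ you can only compute $(S^{n_i}up)\mathbf{x}=S^{n_i}\mathbf{y}$, about which you know nothing, or $(upS^{n_i})\mathbf{x}=up(S^{n_i}\mathbf{x})$, through which you cannot pass the limit because $up$ is discontinuous; commutativity gives no further leverage. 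Worse, as written your argument consumes only the existence of a net with $S^{n_i}\mathbf{x}\to\mathbf{x}$, i.e.\ $S$-recurrence of $\mathbf{x}$. Already for $d=1$, where $S=T^{[1]}_1$ generates $\mathcal{F}^{[1]}$, a recurrent point need not be minimal, so no argument using only that much of the hypothesis can succeed: the minimal idempotent for the $S$-action, which you mention but never actually use, must enter essentially.

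For the record, the paper gives no proof but quotes the lemma from the proof of \cite[Theorem 3.1]{SY12}, and the argument there supplies exactly what your sketch is missing. Since $S$ fixes the $\vec 0$ coordinate and acts diagonally on the remaining $2^d-1$ coordinates, $E(X^{[d]},S)$ is canonically identified with $E(X,T)$, so $S$-minimality of $\mathbf{x}$ produces a \emph{minimal} idempotent $u\in E(X,T)$ with $u\mathbf{x}(\epsilon)=\mathbf{x}(\epsilon)$ for all $\epsilon\neq\vec 0$. Writing $u=\lim_\alpha T^{n_\alpha}$ and taking the iterated limit of $\prod_{j}(T^{[d]}_j)^{n_{\alpha_j}}$ in the variables $\alpha_d,\dots,\alpha_1$ (right multiplication in an enveloping semigroup is continuous), one obtains an element $\mathbf{u}\in E(X^{[d]},\mathcal{F}^{[d]})$ whose $\epsilon$-coordinate is $u^{\epsilon(1)}\cdots u^{\epsilon(d)}$, i.e.\ $u$ for $\epsilon\neq\vec 0$ and $\mathrm{id}$ for $\epsilon=\vec 0$. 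This $\mathbf{u}$ is an idempotent fixing $\mathbf{x}$, and it is a \emph{minimal} idempotent of $E(X^{[d]},\mathcal{F}^{[d]})$ because every element of that semigroup acts coordinatewise by elements of $E(X,T)$ and $u$ is minimal there: if $\mathbf{v}=(v_\epsilon)$ is an idempotent with $\mathbf{v}\mathbf{u}=\mathbf{v}$, then $v_\epsilon u=v_\epsilon$ forces $uv_\epsilon=u$, so $\mathbf{u}\mathbf{v}=\mathbf{u}$. A point fixed by a minimal idempotent is a minimal point, which is the assertion. The heart of the proof is thus the construction of a concrete minimal idempotent of $E(X^{[d]},\mathcal{F}^{[d]})$ out of the $S$-minimality; your proposal never produces any element of $E(X^{[d]},\mathcal{F}^{[d]})$ from that hypothesis.
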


\begin{lemma}\label{replace}
Let $(X,T)$ be a minimal system and $d\in \N$.
For $\omega\in\{0,1\}^d$ and $\mathbf{x}\in \Q^{[d]}(X)$,
let $\mathbf{y}\in X^{[d]}$ such that $\mathbf{y}(\epsilon)=\mathbf{x}(\epsilon)$
if $\epsilon\in \{0,1\}^d\backslash \{\omega\}$
and $(\mathbf{x}(\omega),\mathbf{y}(\omega))\in \RP^{[\infty]}(X)$.
If $\mathbf{y}$ is a $T^{[d]}$-minimal point,
then $\mathbf{y}\in \Q^{[d]}(X)$.
\end{lemma}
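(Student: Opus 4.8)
The plan is to realize $\mathbf{y}$ as a limit of points obtained from $\mathbf{x}$ by applying a carefully chosen element of the enveloping semigroup of a cubespace, using the structural fact that $(\mathbf{x}(\omega),\mathbf{y}(\omega))\in\mathbf{RP}^{[\infty]}(X)$. First I would reduce to the case $\omega=\vec 1$: since $\mathbf{Q}^{[d]}(X)$ is invariant under Euclidean permutations of $X^{[d]}$, and the $T^{[d]}$-minimality hypothesis and the relation $\mathbf{RP}^{[\infty]}$ are both permutation-insensitive in the appropriate sense, an isometry of the cube carrying $\omega$ to $\vec 1$ turns the general statement into the statement for $\omega=\vec 1$. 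So from now on $\mathbf{y}$ agrees with $\mathbf{x}$ off the top vertex $\vec 1$, and $(\mathbf{x}(\vec1),\mathbf{y}(\vec1))\in\mathbf{RP}^{[\infty]}(X)\subset\mathbf{RP}^{[N]}(X)$ for every $N$.

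The key step is to find a single transformation that acts only on the $\vec1$-coordinate and moves $\mathbf{x}(\vec1)$ toward $\mathbf{y}(\vec1)$ while fixing all other coordinates. The natural device is the cubespace $\mathbf{Q}^{[d+1]}(X)$: consider the point $\mathbf{z}\in X^{[d+1]}$ built from $\mathbf{x}$ by duplicating, i.e. $\mathbf{z}(\epsilon,0)=\mathbf{z}(\epsilon,1)=\mathbf{x}(\epsilon)$ for $\epsilon\in\{0,1\}^d$; this lies in $\mathbf{Q}^{[d+1]}(X)$ because $\mathbf{Q}^{[d+1]}(X)$ is the image of $\mathbf{Q}^{[d]}(X)$ under the diagonal-type embedding coming from the face structure. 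Because $(\mathbf{x}(\vec1),\mathbf{y}(\vec1))\in\mathbf{RP}^{[d]}(X)$, by Theorem \ref{cube-minimal} applied at the vertex $\vec1$ there is a sequence in $\mathcal{F}^{[d+1]}$ fixing the lower face $\{\epsilon(d+1)=0\}$ and moving only the upper face, which pushes the upper-face coordinates from $\mathbf{x}$-values toward a configuration that is $\mathbf{y}$ on the top vertex and $\mathbf{x}$ elsewhere; passing to a limit in the enveloping semigroup $E(\mathbf{Q}^{[d+1]}(X),\mathcal{F}^{[d+1]})$ and restricting to the upper face $\{\epsilon(d+1)=1\}\cong\{0,1\}^d$ produces a point of $\mathbf{Q}^{[d]}(X)$ that agrees with $\mathbf{x}$ off $\vec1$ and equals $\mathbf{y}(\vec1)$ at $\vec1$ — but the values off $\vec1$ may have drifted, so one must use that this whole construction can be iterated with $\mathbf{RP}^{[\infty]}$ controlling each vertex in turn, or better, that the limit can be arranged inside the orbit closure so that the non-$\vec1$ coordinates return exactly to the $\mathbf{x}$-values by minimality of $\overline{\mathcal{F}^{[d+1]}}$ on the relevant fiber (Theorem \ref{property}).

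Having produced a point $\mathbf{y}'\in\mathbf{Q}^{[d]}(X)$ that agrees with $\mathbf{y}$ at every coordinate up to the ambiguity at $\vec1$, I would then invoke the hypothesis that $\mathbf{y}$ is a $T^{[d]}$-minimal point together with Lemma \ref{minimal}: $\mathbf{y}$ is then $\mathcal{F}^{[d]}$-minimal, hence lies in the unique $\mathcal{F}^{[d]}$-minimal subset of $\mathbf{Q}^{[d]}_{\mathbf{y}(\vec0)}(X)$. Since $\mathbf{y}'$ lies in $\mathbf{Q}^{[d]}(X)$ with the same base point $\mathbf{y}(\vec0)=\mathbf{x}(\vec0)$ and the same non-$\vec1$ coordinates, and $\overline{\mathcal{F}^{[d]}}(\mathbf{y}')$ meets the minimal set, the points $\mathbf{y}$ and $\mathbf{y}'$ must coincide by the $2^d-1$-coordinates-in-common rigidity available on the minimal cubespace fiber — giving $\mathbf{y}=\mathbf{y}'\in\mathbf{Q}^{[d]}(X)$. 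The main obstacle I anticipate is controlling the drift of the non-$\vec1$ coordinates during the enveloping-semigroup limit: one must arrange the approximating sequence so that in the limit every coordinate other than $\vec1$ is fixed exactly, which is where the full strength of $\mathbf{RP}^{[\infty]}$ (as opposed to a single $\mathbf{RP}^{[d]}$) and the minimality statements of Theorem \ref{property} have to be combined, rather than merely quoted.
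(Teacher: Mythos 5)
Your proposal has a genuine gap, and its final step rests on a false rigidity claim. The closing argument identifies $\mathbf{y}$ with the constructed point $\mathbf{y}'$ via ``$2^d-1$-coordinates-in-common rigidity'' on the cubespace fiber; but that rigidity is precisely the Host--Kra--Maass characterization of systems of order $d-1$ (Theorem \ref{description}(1)) and fails for a general minimal system whenever $\RP^{[d-1]}(X)$ is nontrivial --- two points of $\Q^{[d]}(X)$ (even of the minimal fiber $\overline{\mathcal{F}^{[d]}}(x^{[d]})$) can agree on $2^d-1$ coordinates and differ at the last one. The other unresolved point is the one you flag yourself: the ``drift'' of the non-$\omega$ coordinates under the enveloping-semigroup limit. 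Saying that minimality of $\overline{\mathcal{F}^{[d+1]}}$ lets the other coordinates ``return exactly'' to their $\mathbf{x}$-values is not an argument --- minimality only returns you approximately to a point of the orbit closure, and in any case $\mathcal{F}^{[d+1]}$ does not act coordinate-by-coordinate, so there is no device in the cubespace machinery that moves the single vertex $\omega$ while literally freezing the others. Reducing to $\omega=\vec 1$ also discards the one structural fact that makes a clean argument possible, namely that the face group fixes a distinguished vertex: $\mathcal{F}^{[d]}$ acts as the identity on the $\vec 0$-coordinate, not on the $\vec 1$-coordinate.

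The paper's proof avoids both problems by never attempting to modify one coordinate of $\mathbf{x}$. It reduces to $\omega=\vec 0$ by a Euclidean permutation, then picks a sequence $S_i\in\mathcal{F}^{[d]}$ with $S_i\mathbf{x}\to x^{[d]}$ where $x=\mathbf{x}(\vec 0)$ (possible because $\overline{\mathcal{F}^{[d]}}(x^{[d]})$ is the unique $\mathcal{F}^{[d]}$-minimal set in $\Q^{[d]}_x(X)$, Theorem \ref{property}(3)). Since $S_i$ fixes the $\vec 0$-coordinate and $\mathbf{y}$ agrees with $\mathbf{x}$ elsewhere, $S_i\mathbf{y}\to(y,x^{[d]}_*)$ with $y=\mathbf{y}(\vec 0)$, and this limit lies in $\Q^{[d]}(X)$ by Theorem \ref{cube-minimal} because $(x,y)\in\RP^{[\infty]}(X)\subset\RP^{[d-1]}(X)$. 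Finally, the $T^{[d]}$-minimality hypothesis plus Lemma \ref{minimal} make $\mathbf{y}$ an $\mathcal{F}^{[d]}$-minimal point, so $\mathbf{y}$ lies in the $\mathcal{F}^{[d]}$-orbit closure of $(y,x^{[d]}_*)$, which is contained in the closed invariant set $\Q^{[d]}(X)$. If you want to salvage your write-up, replace the ``modify one vertex'' strategy with this ``collapse to the diagonal and ride minimality back'' strategy; the $\RP^{[\infty]}$ hypothesis is used only to certify that the collapsed point $(y,x^{[d]}_*)$ is a cube.
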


\begin{proof}
Let $\mathbf{x}(\omega)=x,\mathbf{y}(\omega)=y$ and $(x,y)\in \RP^{[\infty]}(X)$.

\medskip

\noindent {\bf Case 1: $\omega=\vec{0}$.}

As $\mathbf{x}\in \Q^{[d]}(X)$,
there exists some sequence $\{S_i\}_{i\in \N}\subset \mathcal{F}^{[d]}$ such that
$S_i \mathbf{x}\to x^{[d]}$ as $i\to \infty$ by Theorem \ref{property}.
At the same time, we have that $S_i\mathbf{y}\to (y,x^{[d]}_*)\in \Q^{[d]}(X)$ as $i\to \infty$.
By our hypothesis, $\mathbf{y}$ is a $T^{[d]}$-minimal point, so $\mathbf{y}$ is also an $\mathrm{id }\times T^{[d]}_*$-minimal point.
 Thus by Lemma \ref{minimal},
 $\mathbf{y}$ is an $\mathcal{F}^{[d]}$-minimal point which implies that
$\mathbf{y}$ also belongs to the orbit closure of $(y,x^{[d]}_*)$ under the $\mathcal{F}^{[d]}$-action,
and thus $\mathbf{y}\in \Q^{[d]}(X)$.

\medskip

\noindent {\bf Case 2: $\omega\neq\vec{0}$.}

Recall that
$\mathbf{Q}^{[d]}(X)$ is invariant under the Euclidean permutations of $X^{[d]}$.
We can choose some Euclidean permutation $f$ such that $f(\mathbf{y})(\vec{0})=\mathbf{y}(\omega)$.

Now we have $f(\mathbf{x})(\epsilon)=f(\mathbf{y})(\epsilon)$ for any $\epsilon\in \{0,1\}^d\backslash\{\vec{0}\}$
and $(f(\mathbf{x})(\vec{0}),f(\mathbf{y})(\vec{0}))\in \RP^{[\infty]}(X)$.
Moreover, $f(\mathbf{y})$ is also a $T^{[d]}$-minimal point.
By Case 1, we get that $f(\mathbf{y})\in \Q^{[d]}(X)$
and thus $\mathbf{y}\in \Q^{[d]}(X)$.
\end{proof}

Recall a characterization of $\I_{fip}$-pairs in \cite[Corollary 4.4]{DDMSY13}.

\begin{lemma}\label{mini-infi}
  Let $(X,T)$ be a minimal system and $(x_0,x_1)\in \RP^{[\infty]}(X)\backslash \Delta$.
  If $(x_0,x_1) $ is a $T\times T$-minimal point,
  then $(x_0,x_1)$ is an $\I_{fip}$-pair.
\end{lemma}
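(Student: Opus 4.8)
The plan is to reduce the statement to a problem about realizing configurations inside dynamical cubespaces, and then to build the required configurations by iterating Lemma~\ref{replace}.

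Fix arbitrary neighborhoods $U_0,U_1$ of $x_0,x_1$. By the characterization of $\I_{fip}$-pairs in \cite[Corollary 4.4]{DDMSY13} --- or, alternatively, by running the computation in the proof of Lemma~\ref{INN} with $k=1$ for each $d$ --- it suffices to establish the following claim: \emph{for every $D\in\N$ and every map $p\colon\{0,1\}^{D}\to\{x_0,x_1\}$ there is $\mathbf{x}\in\mathbf{Q}^{[D]}(X)$ with $\mathbf{x}(\epsilon)=p(\epsilon)$ for all $\epsilon\in\{0,1\}^D$}. Granting the claim, fix $d\in\N$ and apply it in dimension $D=2^{d}+d$ to the configuration $p$ taking the value $x_{t(b)}$ at each vertex of the form $\theta=\theta(t,1,b)\in\Theta_{1,d}$ and the value $x_0$ at all other vertices (this is well defined, since $\theta(t,1,b)$ determines $t$ and $b$). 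Reading off the conclusion of the proof of Lemma~\ref{INN} for $k=1$, this produces integers $p_1,\dots,p_d$ with $\{p_1\epsilon(1)+\dots+p_d\epsilon(d):\epsilon\in\{0,1\}^d\}\backslash\{0\}\subset\I(U_0,U_1)$, i.e. an $\mathrm{IP}_d$-set inside $\I(U_0,U_1)$; since $d$ and $U_0,U_1$ are arbitrary, $(x_0,x_1)$ is an $\I_{fip}$-pair.

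To prove the claim I would induct on the number of vertices $\epsilon$ with $p(\epsilon)=x_1$. If there are none, $p$ is realized by $x_0^{[D]}\in\mathbf{Q}^{[D]}(X)$. For the inductive step, obtain $p$ from some $p'$ by switching the value at a single vertex $\omega$ from $x_0$ to $x_1$; by the inductive hypothesis pick $\mathbf{x}'\in\mathbf{Q}^{[D]}(X)$ realizing $p'$ and let $\mathbf{y}$ be the configuration realizing $p$, so $\mathbf{y}$ agrees with $\mathbf{x}'$ off $\omega$ and $(\mathbf{x}'(\omega),\mathbf{y}(\omega))=(x_0,x_1)\in\mathbf{RP}^{[\infty]}(X)$. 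The key point is that $\mathbf{y}$ is a $T^{[D]}$-minimal point: the map $\phi\colon X\times X\to X^{[D]}$ sending $(a,b)$ to the configuration equal to $a$ on $\{\epsilon:p(\epsilon)=x_1\}$ and to $b$ elsewhere is continuous and intertwines $T\times T$ with $T^{[D]}$, and it carries the minimal system $\overline{\mathcal{O}((x_1,x_0),T\times T)}$ (minimal because $(x_0,x_1)$, hence $(x_1,x_0)$, is $T\times T$-minimal) onto $\overline{\mathcal{O}(\mathbf{y},T^{[D]})}$, which is therefore minimal. Lemma~\ref{replace} then gives $\mathbf{y}\in\mathbf{Q}^{[D]}(X)$, completing the induction.

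The main obstacle is exactly this verification that all the intermediate configurations are $T^{[D]}$-minimal: it is the only place where the hypothesis that $(x_0,x_1)$ is a $T\times T$-minimal point is used, and once it is available Lemma~\ref{replace} does the remaining work. A secondary point to be careful about is checking that the configuration demanded by the cube criterion for $\I_{fip}$-pairs (here the one supported on $\Theta_{1,d}$) really takes only the two values $x_0$ and $x_1$, so that the claim is applicable to it.
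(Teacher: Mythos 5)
Your argument is correct, but note that the paper itself offers no proof of Lemma~\ref{mini-infi}: it is stated as a recalled characterization from \cite[Corollary 4.4]{DDMSY13}, so there is no internal proof to compare against. What you give is a legitimate self-contained derivation from the paper's own toolkit, and the steps check out. The reduction is sound: realizing the $\Theta_{1,d}$-configuration in $\mathbf{Q}^{[2^d+d]}(X)$ and running the proof of Lemma~\ref{INN} with $k=1$ produces, for each $d$ and each pair of neighborhoods, an $\mathrm{IP}_d$-set which is an independence set for $(U_0,U_1)$, and that is the definition of an $\I_{fip}$-pair used in the paper. The induction is also sound: each intermediate configuration is the image of the $T\times T$-minimal point $(x_1,x_0)$ under a continuous map intertwining $T\times T$ with the diagonal action $T^{[D]}$, hence is a $T^{[D]}$-minimal point, which is exactly the hypothesis Lemma~\ref{replace} needs, and the assumption $(x_0,x_1)\in\mathbf{RP}^{[\infty]}(X)$ enters precisely where that lemma requires it. This coordinate-by-coordinate application of Lemma~\ref{replace} to a minimal cube is the same mechanism the paper deploys in Step~3 of the proof of Lemma~\ref{main-thm3}, so your proof is consistent in spirit with the rest of the paper. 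The only caveat, inherited from Lemma~\ref{INN} rather than introduced by you, is that one should take $U_0\cap U_1=\emptyset$ (possible since $x_0\neq x_1$) so that the resulting integers $p_1,\dots,p_d$ yield a nondegenerate $\mathrm{IP}_d$-set.
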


Analogously to Lemma \ref{mini-infi},
we provide a characterization of $\mathrm{IN}^{[d]}$-pairs.

\begin{lemma}\label{main-thm3}
Let $(X,T)$ be a minimal system and $d\in \N,(x_0,x_1)\in \mathbf{RP}^{[d]}(X)\backslash \Delta$.
 If $(x_0,x_1)$ is a $T\times T$-minimal point, then
$(x_0,x_1)$ is an $\mathrm{IN}^{[d]}$-pair.
\end{lemma}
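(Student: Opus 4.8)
The plan is to reduce the general statement to the case of an inverse limit of minimal nilsystems, where Corollary \ref{inverse-limit} applies, by passing to the maximal factor of order $\infty$. Write $\pi\colon (X,T)\to (X_\infty,T)$ for the factor map onto the maximal factor of order $\infty$, which by Theorem \ref{system-of-order} is an inverse limit of minimal nilsystems. Since $(x_0,x_1)\in\mathbf{RP}^{[d]}(X)$, Theorem \ref{lift-property} gives $(\pi(x_0),\pi(x_1))\in\mathbf{RP}^{[d]}(X_\infty)$. If $\pi(x_0)=\pi(x_1)$, then $(x_0,x_1)\in\mathbf{RP}^{[\infty]}(X)\setminus\Delta$ and is $T\times T$-minimal, so Lemma \ref{mini-infi} already makes it an $\I_{fip}$-pair, hence an $\mathrm{IN}^{[d]}$-pair by Remark \ref{include}; so we may assume $\pi(x_0)\neq\pi(x_1)$.

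In that case Corollary \ref{inverse-limit} tells us $(\pi(x_0),\pi(x_1))$ is an $\mathrm{IN}^{[d]}$-pair in $X_\infty$; concretely, for each $k$ the cube-witness criterion of Lemma \ref{INN} is met downstairs, i.e.\ there is $\mathbf{z}\in\Q^{[k(2^d+d)]}(X_\infty)$ with $\mathbf{z}(\theta)=\pi(x_{t_a(b)})$ for all $\theta\in\Theta_{k,d}$. The goal is to lift this to a point $\mathbf{x}\in\Q^{[k(2^d+d)]}(X)$ with $\mathbf{x}(\theta)=x_{t_a(b)}$ for all $\theta\in\Theta_{k,d}$, after which Lemma \ref{INN} finishes the proof. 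Since $\pi^{[N]}$ maps $\Q^{[N]}(X)$ onto $\Q^{[N]}(X_\infty)$ (where $N=k(2^d+d)$), we can first pick some $\mathbf{x}'\in\Q^{[N]}(X)$ with $\pi^{[N]}(\mathbf{x}')=\mathbf{z}$; this forces $(\mathbf{x}'(\theta),x_{t_a(b)})\in R_\pi\subset\mathbf{RP}^{[\infty]}(X)$ for every $\theta\in\Theta_{k,d}$, so at each coordinate indexed by some $\theta$ the value is correct modulo $\mathbf{RP}^{[\infty]}$.

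The key device is Lemma \ref{replace}: replacing a single coordinate $\mathbf{x}(\omega)$ of a cube in $\Q^{[N]}(X)$ by an $\mathbf{RP}^{[\infty]}$-related point keeps it in $\Q^{[N]}(X)$, provided the resulting tuple is $T^{[N]}$-minimal. So I would repair the coordinates indexed by $\Theta_{k,d}$ one at a time. The delicate point — and the main obstacle — is arranging $T^{[N]}$-minimality of the corrected tuple at each step. The standard way around this (used in \cite{SY12,DDMSY13}) is to start from an $\mathcal{F}^{[N]}$-minimal point of $\Q^{[N]}(X)$ in the fiber over an $\mathcal{F}^{[N]}$-minimal point of $\Q^{[N]}(X_\infty)$, using that $(\overline{\mathcal{F}^{[N]}}(x^{[N]}),\mathcal{F}^{[N]})$ is minimal (Theorem \ref{property}) and the minimality of $(x_0,x_1)$ under $T\times T$, together with standard enveloping-semigroup/idempotent arguments, to choose the lift $\mathbf{x}'$ so that the whole correction procedure stays within minimal points. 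One then applies Lemma \ref{replace} repeatedly (finitely many times, once per $\theta\in\Theta_{k,d}$, noting distinct $\theta$'s occupy distinct coordinates so the replacements do not interfere) to reach the desired $\mathbf{x}\in\Q^{[N]}(X)$ with $\mathbf{x}(\theta)=x_{t_a(b)}$ for all $\theta\in\Theta_{k,d}$. Since $k$ was arbitrary, Lemma \ref{INN} yields that $(x_0,x_1)$ is an $\mathrm{IN}^{[d]}$-pair.
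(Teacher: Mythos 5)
Your overall route is the paper's: reduce to the maximal factor of order $\infty$, handle the degenerate case $\pi(x_0)=\pi(x_1)$ via Lemma \ref{mini-infi}, produce the cube witness downstairs by the argument of Corollary \ref{inverse-limit}, lift it to some $\mathbf{w}\in\Q^{[N]}(X)$ with $\mathbf{w}(\theta)\in\RP^{[\infty]}[x_{t_a(b)}]$, and repair the coordinates with Lemma \ref{replace}. The problem is that the one genuinely delicate step --- securing the minimality hypothesis of Lemma \ref{replace} while forcing the repaired coordinates to equal the \emph{prescribed} points $x_0,x_1$ --- is exactly the step you leave as a gesture toward ``standard enveloping-semigroup/idempotent arguments,'' and the gesture as written does not close the gap. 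Choosing the lift $\mathbf{x}'$ to be $\mathcal{F}^{[N]}$-minimal over an $\mathcal{F}^{[N]}$-minimal point of $\Q^{[N]}(X_\infty)$ is not what Lemma \ref{replace} asks for (it asks that the \emph{corrected} tuple be $T^{[N]}$-minimal, i.e.\ minimal under the diagonal action), and there is no reason an arbitrary such lift, after its $\theta$-coordinates are overwritten by the fixed points $x_{t_a(b)}$, is $T^{[N]}$-minimal. This is also the only place where the hypothesis that $(x_0,x_1)$ is a $T\times T$-minimal point must enter, and your sketch never visibly uses it.

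The paper's mechanism is worth recording because it is not the one you describe. One adjoins the target points to the cube and considers the tuple $(x_0,x_1,\mathbf{w})$ under $T^{[N]+2}$. If this enlarged tuple happens to be minimal, Lemma \ref{replace} applies directly (the relevant coordinates of the corrected tuple are coordinates of a minimal tuple). In general, Proposition \ref{ELLIS}(3) provides a minimal point $(x_0',x_1',\mathbf{w}')$ in its orbit closure that is \emph{proximal} to it; since $\mathbf{P}(X)\subset\RP^{[\infty]}(X)$ and $\RP^{[\infty]}(X)$ is an equivalence relation, the relations $\mathbf{w}'(\theta)\in\RP^{[\infty]}[x'_{t_a(b)}]$ survive, so the replacement can be carried out at this minimal companion, yielding a cube whose $\theta$-coordinates are $x'_{t_a(b)}$ rather than $x_{t_a(b)}$. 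Only now does the $T\times T$-minimality of $(x_0,x_1)$ get used: since $(x_0',x_1')$ lies in the orbit closure of the minimal point $(x_0,x_1)$, one can flow back along a sequence $(T\times T)^{n_i}(x_0',x_1')\to(x_0,x_1)$ and take a limit point of the corresponding $(T^{n_i})^{[N]}$-images of the corrected cube, landing on $\mathbf{x}\in\Q^{[N]}(X)$ with $\mathbf{x}(\theta)=x_{t_a(b)}$ for all $\theta\in\Theta_{k,d}$. Without this detour through a proximal minimal companion and the return trip, the repair step of your argument is not justified.
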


\begin{proof}
Let $\pi:X\to X_\infty=X/\mathbf{RP}^{[\infty]}(X)$ be the factor map
and let $u_j=\pi(x_j),j=0,1$.
If $u_0=u_1$, then $(x_0,x_1)\in \RP^{[\infty]}(X)$ and thus $(x_0,x_1)\in \I_{fip}(X)$
by Lemma \ref{mini-infi}.
In particular, we have $(x_0,x_1)\in \mathrm{IN}^{[d]}(X)$.

Now assume that $u_0\neq u_1$,
then $(u_0,u_1)\in \RP^{[d]}(X_\infty)\backslash\Delta_{X_\infty}$ by Theorem \ref{lift-property}
and $(u_0,u_1)$ is a $T\times T$-minimal point as
$(x_0,x_1)$ is a $T\times T$-minimal point.
\medskip


Fix $k\in \N$.
By Lemma \ref{INN}, it suffices to show that
there is some $\mathbf{x}\in\Q^{[k(2^d+d)]}(X)$ such that
 $\mathbf{x}(\theta)=x_{t_a(b)}$
for all $\theta\in \Theta_{k,d}$.

\medskip

\noindent {\bf Step 1: Reduction on the maximal factor of order $\infty$.}

It follows from Theorem \ref{system-of-order} that
$X_\infty$ is an inverse limit of minimal nilsystems.
By the argument in Corollary \ref{inverse-limit},
there exists some $\mathbf{u}\in\Q^{[k(2^d+d)]}(X_\infty)$ such that
 $\mathbf{u}(\theta)=u_{t_a(b)}$
for all $\theta\in \Theta_{k,d}$.

\medskip

\noindent {\bf Step 2: Lifting to $X$.}

Notice that $\pi^{[l]}:(\Q^{[l]}(X),\mathcal{G}^{[l]})\to (\Q^{[l]}(X_\infty),\mathcal{G}^{[l]})$ is a factor map for every $l\in \N$,
where $\pi^{[l]}: X^{[l]}\rightarrow X_\infty^{[l]}$
is defined from $\pi $ coordinatewise.

We note that Theorem \ref{lift-property} also holds for general abelian groups action,
thus there is some $\mathbf{w}\in \mathbf{Q}^{[k(2^d+d)]}(X)$
such that
\[
\pi^{[k(2^d+d)]}(\mathbf{w})=\mathbf{u},
\]
which implies that $\mathbf{w}(\theta)\in \RP^{[\infty]}[x_{t_a(b)}] $
for all $\theta\in \Theta_{k,d}$.

\medskip

\noindent {\bf Step 3: Transformations.}

\medskip

\noindent {\bf Case 1:} $(x_0,x_1,\mathbf{w})$ is a $T^{[k(2^d+d)]+2}$-minimal point.

By Lemma \ref{replace},
we can replace $\mathbf{w}(\theta)$ by $x_{t_a(b)}$ for all $\theta\in \Theta_{k,d}$
which implies that there is some $\mathbf{x}\in \mathbf{Q}^{[k(2^d+d)]}(X)$ such that
 $\mathbf{x}(\theta)=x_{t_a(b)}$
for all $\theta\in \Theta_{k,d}$.

\medskip

\noindent {\bf Case 2: General cases.}

By property (3) of Proposition \ref{ELLIS}, there is a minimal point
\begin{equation}\label{proximal}
(x_0',x_1',\mathbf{w}')
\in \overline{\mathcal{O}((x_0,x_1,\mathbf{w}),T^{[k(2^d+d)]+2})}
\end{equation}
such that they are also proximal.
Note that $\mathbf{Q}^{[k(2^d+d)]}(X)$ is $T^{[k(2^d+d)]}$-invariant,
we get $\mathbf{w}'\in \mathbf{Q}^{[k(2^d+d)]}(X)$.

Now by (\ref{proximal}),
$(x_i,x_i'),(\mathbf{w}(\epsilon),\mathbf{w}'(\epsilon))\in\mathbf{P}(X) $
for all $i=0,1$ and $\epsilon\in \{0,1\}^{[k(2^d+d)]}$.
As $\mathbf{P}(X)\subset \mathbf{RP}^{[\infty]}(X)$
and $\mathbf{w}(\theta)\in \RP^{[\infty]}[x_{t_a(b)}]$ for all $\theta\in \Theta_{k,d}$,
 by equivalence of $\RP^{[\infty]}(X)$
we get that
\[
\mathbf{w}'(\theta)\in \RP^{[\infty]}[x'_{t_a(b)}],
\]
which implies $\mathbf{w}'\in \Q^{[k(2^d+d)]}(X)$  by Case 1.

Recall that
 $(x_0,x_1)$ is a $T\times T$-minimal point and
  $ (x_0',x_1')\in \overline{\mathcal{O}((x_0,x_1),T\times T)}$,
 there exists some sequence $\{n_i\}_{i\in \N}\subset \Z$ such that
 \[
 (T\times T)^{n_i} (x_0',x_1')\to (x_0,x_1)
 \]
 as $i\to \infty$.
 Let $\mathbf{x}$ be some limit point of the sequence $\{ (T^{n_i})^{[k(2^d+d)]} \mathbf{w}'\}_{i\in \N}$,
 then we have $\mathbf{x}\in \Q^{[k(2^d+d)]}(X)$ and $\mathbf{x}(\theta)=x_{t_a(b)}$ for all $\theta\in \Theta_{k,d}$.
\end{proof}

As a consequence,
we get the following corollary.

\begin{cor}\label{distal-case}
 Let $(X,T)$ be a minimal distal system and $d\in \N$.
  Then $(x_0,x_1)\in\mathrm{IN}^{[d]}(X)$
if and only if $(x_0,x_1)\in \RP^{[d]}(X)$.
\end{cor}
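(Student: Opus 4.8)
\textbf{Proof proposal for Corollary \ref{distal-case}.}
The plan is to derive this immediately from Lemma \ref{main-thm3} together with the standard fact that in a minimal distal system the product system is also distal, hence every point is minimal. First I would recall that any $\mathrm{IN}^{[d]}$-pair is regionally proximal of order $d$ (Remark \ref{include}), so one inclusion $\mathrm{IN}^{[d]}(X)\subset \RP^{[d]}(X)$ holds in complete generality and requires no distality. The content is therefore the reverse inclusion: given $(x_0,x_1)\in \RP^{[d]}(X)\backslash\Delta$, show $(x_0,x_1)\in \mathrm{IN}^{[d]}(X)$.

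For the reverse inclusion I would argue as follows. Since $(X,T)$ is minimal and distal, the product system $(X\times X,T\times T)$ is distal, and a distal system decomposes into minimal subsystems; equivalently, by the Ellis--Namakura/Auslander theory every point of a distal system is almost periodic. Hence the pair $(x_0,x_1)$ is a $T\times T$-minimal point. Now apply Lemma \ref{main-thm3} directly: it yields that $(x_0,x_1)$ is an $\mathrm{IN}^{[d]}$-pair. Combining the two inclusions gives $\mathrm{IN}^{[d]}(X)=\RP^{[d]}(X)$, which (after noting both sides contain $\Delta$ trivially, since constant sequences give independence sets and the diagonal is always in $\RP^{[d]}$) is the claimed equivalence.

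There is essentially no obstacle here — the corollary is a clean specialization of Lemma \ref{main-thm3}, with distality invoked solely to guarantee that the hypothesis "$(x_0,x_1)$ is a $T\times T$-minimal point" is automatically satisfied. The only point worth stating carefully is why distality of $(X,T)$ forces every point of $X\times X$ to be minimal: one uses that $(X\times X, T\times T)$ is distal (the product of two distal systems is distal, since proximality in the product forces proximality in each coordinate) and that in a distal system every point is almost periodic, a classical result of Ellis. No new dynamical cubespace computation is needed beyond what Lemma \ref{main-thm3} already provides.
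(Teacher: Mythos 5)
Your proposal is correct and is exactly the argument the paper intends: the inclusion $\mathrm{IN}^{[d]}(X)\subset \RP^{[d]}(X)$ is Remark \ref{include}, and the converse follows from Lemma \ref{main-thm3} because distality of $(X,T)$ makes $(X\times X,T\times T)$ distal, hence every pair is a $T\times T$-minimal point. The paper gives no further detail (it states the corollary ``as a consequence'' of Lemma \ref{main-thm3}), so there is nothing to compare beyond this.
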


Now we are able to show the main result of this section.
We need the following theorem.

\begin{theorem}\cite[Theorem 4.5]{DDMSY13}\label{infi-strp}
Let $(X,T)$ be a minimal system. If $X$ does not contain any nontrivial
$\mathrm{Ind}_{fip}$-pair, then it is an almost one-to-one extension of
its maximal factor of order $\infty$.
\end{theorem}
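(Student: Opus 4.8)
The plan is to produce a single point $x\in X$ with $\mathbf{RP}^{[\infty]}[x]=\{x\}$, i.e. an $\infty$-step almost automorphic point; by \cite[Theorem 8.13]{HSY16} the existence of such a point is exactly equivalent to $(X,T)$ being an almost one-to-one extension of its maximal factor of order $\infty$, so this suffices. Write $\pi:X\to X_\infty:=X/\mathbf{RP}^{[\infty]}(X)$ for the factor map, so that $R_\pi=\mathbf{RP}^{[\infty]}(X)$ and $\mathbf{RP}^{[\infty]}[x]=\pi^{-1}(\pi(x))$. By Theorem \ref{system-of-order}, $X_\infty$ is an inverse limit of minimal nilsystems; as minimal nilsystems are distal and an inverse limit of distal systems is distal, $X_\infty$ is distal. (Once one suitable $x$ is found, the set $X_0=\{x:\pi^{-1}(\pi(x))=\{x\}\}$ is a $T$-invariant $G_\delta$, hence dense by minimality, but the cited characterization already packages this.)

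First I would show that $\pi$ is a \emph{proximal extension}, i.e. $R_\pi\subset\mathbf{P}(X)$. Let $(x_0,x_1)\in\mathbf{RP}^{[\infty]}(X)\setminus\Delta$ and choose a $T\times T$-minimal point $(x_0',x_1')$ in the orbit closure $\overline{\mathcal{O}((x_0,x_1),T\times T)}$. Since $\mathbf{RP}^{[\infty]}(X)$ is a closed invariant relation, $(x_0',x_1')\in\mathbf{RP}^{[\infty]}(X)$. If $(x_0,x_1)$ were not proximal, then $\inf_{n}\rho(T^nx_0,T^nx_1)=c>0$, so $\rho(a,b)\ge c$ for every $(a,b)$ in the orbit closure and in particular $x_0'\neq x_1'$. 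Thus $(x_0',x_1')$ would be a $T\times T$-minimal point of $\mathbf{RP}^{[\infty]}(X)\setminus\Delta$, which by Lemma \ref{mini-infi} is a nontrivial $\mathrm{Ind}_{fip}$-pair, contradicting the hypothesis. Hence every nontrivial pair of $R_\pi$ is proximal; equivalently, the only $T\times T$-minimal points lying in $\mathbf{RP}^{[\infty]}(X)$ are on the diagonal.

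Next I would transfer the proximal extension over the distal base $X_\infty$ towards almost one-to-oneness through the enveloping semigroup $E(X)$. Fix a minimal left ideal $I\subset E(X)$ and an idempotent $u\in I$. Because $X_\infty$ is distal, $E(X_\infty)$ is a group, so the image of $u$ under the canonical homomorphism $E(X)\to E(X_\infty)$ is the identity; hence $u$ preserves every fiber of $\pi$. Picking a $u$-fixed point $x^*=ux_0$ and any $x'\in\pi^{-1}(\pi(x^*))$, the pair $(x^*,x')$ lies in $R_\pi$ and is therefore proximal by the previous step, so $px^*=px'$ for some $p\in I$ (by the standard properties of the enveloping semigroup recorded in Proposition \ref{ELLIS}). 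Using $Ip=I$ to select $q\in I$ with $qp=u$, one gets $ux^*=ux'$, that is $x^*=ux'$. Thus $u$ collapses the whole fiber $\pi^{-1}(\pi(x^*))$ onto the single point $x^*$.

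The main obstacle is the final upgrade: to conclude $\pi^{-1}(\pi(x^*))=\{x^*\}$ I need $x^*$ to be a genuine \emph{distal} point (fixed by every minimal idempotent), not merely $u$-fixed, for then proximality of $\pi$ forces its fiber to be a singleton. This is the one step that is not formal, and it is precisely where the acting group being $\Z$ enters: $\Z$ is strongly amenable, so it admits no nontrivial minimal proximal flow, and this collapses the relatively proximal behaviour of the fibers of the proximal extension $\pi$ over the distal base $X_\infty$, producing a distal point $x^*$. This distal point is the desired $\infty$-step almost automorphic point. I would isolate this amenability input as the crux (it is classical in the theory of proximal extensions, cf. the work of Auslander and Glasner); by contrast, the reduction of the first two paragraphs is driven entirely by Lemma \ref{mini-infi} together with the structure theorem for systems of order $\infty$.
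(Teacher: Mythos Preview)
The paper does not prove this theorem: it is quoted verbatim from \cite[Theorem~4.5]{DDMSY13} and used as a black box in the proof of Theorem~\ref{main-thm2}. There is therefore no proof in the present paper to compare against. Your first step---showing that $\pi:X\to X_\infty$ is a proximal extension by observing that a nondiagonal $T\times T$-minimal point of $\mathbf{RP}^{[\infty]}(X)$ would be a nontrivial $\mathrm{Ind}_{fip}$-pair via Lemma~\ref{mini-infi}---is correct, and it is precisely the argument the paper runs (with Lemma~\ref{main-thm3} replacing Lemma~\ref{mini-infi}) in the proof of Theorem~\ref{main-thm2} to show that $X\to X/\mathbf{RP}^{[d]}(X)$ is proximal. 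At that point the paper simply \emph{invokes} Theorem~\ref{infi-strp}; you are attempting to supply what the paper cites.

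Your final step, however, is a genuine gap that you yourself flag. You correctly show that a fixed minimal idempotent $u\in I$ collapses each fiber $\pi^{-1}(\pi(x^*))$ onto the single point $x^*=ux^*$, but this does \emph{not} give $\pi^{-1}(\pi(x^*))=\{x^*\}$: the map $u$ is not injective, and a different minimal idempotent $v$ may collapse the same fiber onto a different point $vx^*\neq x^*$. Passing from ``every minimal idempotent collapses every fiber'' to ``some fiber is a singleton'' requires producing a \emph{distal} point (one fixed by all minimal idempotents), and your appeal to strong amenability of $\Z$ does not do this. Strong amenability says there is no nontrivial minimal proximal $\Z$-flow; the fibers here are not $T$-invariant, and you give no mechanism linking triviality of minimal proximal flows to the existence of a singleton fiber for the extension $\pi$. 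This is exactly the nontrivial part of the theorem in \cite{DDMSY13}, and your sketch does not reproduce it. The reduction in your first paragraph is the easy half; what remains is the substance of the cited result.
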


\begin{theorem}\label{main-thm2}
Let $(X,T)$ be a minimal system and $d\in \N$.
If $X$ does not contain any nontrivial $\mathrm{IN}^{[d]}$-pair,
then it is an almost one-to-one extension of
its maximal factor of order $d$.
\end{theorem}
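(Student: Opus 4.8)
The plan is to reduce to the maximal factor of order $\infty$ of $X$ and show that it is already a system of order $d$. By Remark \ref{include}, every $\mathrm{Ind}_{fip}$-pair is an $\mathrm{IN}^{[d]}$-pair, so the hypothesis forces $X$ to have no nontrivial $\mathrm{Ind}_{fip}$-pair; Theorem \ref{infi-strp} then gives that the factor map $\pi_\infty\colon(X,T)\to(X_\infty,T)$ onto $X_\infty=X/\RP^{[\infty]}(X)$ is almost one-to-one. Recall that $X_\infty$ is a system of order $\infty$, hence by Theorem \ref{system-of-order} an inverse limit of minimal nilsystems, so in particular $(X_\infty,T)$ is distal. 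The goal is then to prove $\RP^{[d]}(X_\infty)=\Delta_{X_\infty}$: granting this, Theorem \ref{lift-property}(1) gives $(\pi_\infty\times\pi_\infty)\RP^{[d]}(X)=\RP^{[d]}(X_\infty)=\Delta_{X_\infty}$, i.e. $\RP^{[d]}(X)\subset R_{\pi_\infty}=\RP^{[\infty]}(X)$; since the reverse inclusion always holds we get $\RP^{[d]}(X)=\RP^{[\infty]}(X)$, so $X_\infty$ is precisely the maximal factor of order $d$ of $X$, and the almost one-to-oneness of $\pi_\infty$ completes the proof.

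To prove $\RP^{[d]}(X_\infty)=\Delta_{X_\infty}$, I would argue by contradiction and suppose there is a pair $(u_0,u_1)\in\RP^{[d]}(X_\infty)\setminus\Delta_{X_\infty}$. Using the lifting property of $\RP^{[d]}$ (Theorem \ref{lift-property}(1)), choose $(x_0,x_1)\in\RP^{[d]}(X)$ with $\pi_\infty(x_i)=u_i$ for $i=0,1$; since $u_0\neq u_1$, also $x_0\neq x_1$. Now pick a $T\times T$-minimal point $(x_0',x_1')\in\overline{\mathcal{O}((x_0,x_1),T\times T)}$, which exists because every nonempty closed invariant subset of $X\times X$ contains a minimal set; as $\RP^{[d]}(X)$ is closed and $T\times T$-invariant, $(x_0',x_1')\in\RP^{[d]}(X)$. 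The pair $(\pi_\infty(x_0'),\pi_\infty(x_1'))$ belongs to $\overline{\mathcal{O}((u_0,u_1),T\times T)}$, and this orbit closure is disjoint from $\Delta_{X_\infty}$ because $(X_\infty,T)$ is distal and $u_0\neq u_1$; hence $\pi_\infty(x_0')\neq\pi_\infty(x_1')$ and therefore $x_0'\neq x_1'$.

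Thus $(x_0',x_1')$ is a $T\times T$-minimal point lying in $\RP^{[d]}(X)\setminus\Delta$, so by Lemma \ref{main-thm3} it is an $\mathrm{IN}^{[d]}$-pair, and it is nontrivial since $x_0'\neq x_1'$. This contradicts the hypothesis that $X$ has no nontrivial $\mathrm{IN}^{[d]}$-pair, and so $\RP^{[d]}(X_\infty)=\Delta_{X_\infty}$ as required. (Alternatively, one could deduce $\RP^{[d]}(X_\infty)=\Delta_{X_\infty}$ from Corollary \ref{distal-case} applied to the distal system $X_\infty$, once one observes that a factor of a system without nontrivial $\mathrm{IN}^{[d]}$-pairs again has none; the route above keeps everything inside the results stated so far.)

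The step I expect to be the main obstacle is producing, out of a nontrivial regionally proximal pair of order $d$ on $X_\infty$, a pair on $X$ that is both nontrivial and minimal in $(X\times X,T\times T)$. This is exactly where the distality of $X_\infty$ is used --- to ensure that passing from the product orbit closure of the lifted pair to a minimal subset does not collapse the pair onto the diagonal --- together with the lifting property of $\RP^{[d]}$. The remaining steps are a routine assembly of the cited results.
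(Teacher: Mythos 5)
Your proof is correct, and its overall architecture coincides with the paper's: both use Remark \ref{include} together with Theorem \ref{infi-strp} to get the almost one-to-one extension onto the maximal factor of order $\infty$, and both use Lemma \ref{main-thm3} applied to a $T\times T$-minimal point of $\RP^{[d]}(X)$ to identify that factor with the maximal factor of order $d$. The one genuine divergence is in the mechanics of this last identification. The paper works entirely upstairs: for an arbitrary $(x,y)\in\RP^{[d]}(X)$ it applies a minimal idempotent $u\in E(X,T)$, notes that $(ux,uy)$ is a $T\times T$-minimal point of the closed invariant set $\RP^{[d]}(X)$, concludes $ux=uy$ from Lemma \ref{main-thm3}, hence $(x,y)\in\mathbf{P}(X)$; since always $\mathbf{P}(X)\subset\RP^{[\infty]}(X)\subset\RP^{[d]}(X)$, all three relations coincide. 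You instead argue by contradiction downstairs, lifting a putative nontrivial pair of $\RP^{[d]}(X_\infty)$ via Theorem \ref{lift-property}, passing to a minimal point, and invoking distality of $X_\infty$ (as an inverse limit of minimal nilsystems, via Theorem \ref{system-of-order}) to keep that minimal point off the diagonal --- exactly the obstacle you flagged, and your use of distality does resolve it. Both routes are valid; the paper's is marginally more economical, needing neither the distality of $X_\infty$ nor the lifting property at this stage, and it yields the slightly stronger byproduct that $X\to X/\RP^{[d]}(X)$ is a proximal extension.
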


\begin{proof}
Let $(X,T)$ be a minimal system without
$\mathrm{IN}^{[d]}$-pairs, where $d\in \N$.
Let $\pi:X\to X/\mathbf{RP}^{[d]}(X)$ be the factor map.

We first show that $\pi$ is a proximal extension.

Remark that if $(x,y)\in R_{\pi}=\mathbf{RP}^{[d]}(X)$ is a $T\times T$-minimal point,
then by Lemma \ref{main-thm3} we have $(x,y)$ is an $\mathrm{IN}^{[d]}$-pair and thus we get that $x=y$.
Now consider any $(x,y)\in R_{\pi}$ and $u\in E(X,T)$ a minimal idempotent.
As $(ux,uy)$ is a $T\times T$-minimal point,
we have from previous observation that $ux=uy$, which implies that $(x,y)$ is a proximal pair.

This shows
 that $\mathbf{P}(X)=\mathbf{RP}^{[\infty]}(X)=\mathbf{RP}^{[d]}(X)$,
 which implies that the maximal factor of order $\infty$ of $X$
 is $X/\mathbf{RP}^{[d]}(X)$.

As $X$ dose not contain any nontrivial $\mathrm{IN}^{[d]}$-pair,
we get that $\mathrm{Ind}_{fip}(X)$ is trivial.
By Theorem \ref{infi-strp} $X$ is an almost one to one extension of
its maximal factor of order $\infty$.
From this, we deduce that
$X$ is an almost one to one extension of
its maximal factor of order $d$.

This completes the proof.
\end{proof}

To end this section,
we give a question which we can not solve in this paper:

\begin{ques}
Let $(X,T)$ be a minimal system without any nontrivial $\mathrm{IN}^{[d]}$-pair,
where $d\in \mathbb{N}$.
Is $(X,T)$ uniquely ergodic?
\end{ques}

\appendix

\section{Basic facts about abstract topological dynamics}\label{sectionA}

In this appendix, we recall some basic definitions and results in abstract topological dynamical systems.
For more details, see \cite{JA88,ER}.

\subsection{Topological transformation groups}

A \emph{topological dynamical system} is a triple $\mathcal{X}=(X,\mathcal{T},\Pi)$,
where $X$ is a compact metrizable space, $\mathcal{T}$ is a $T_2$ topological group and $\Pi:T\times X\to X$
is a continuous map such that $\Pi(e,x)=x$
and $\Pi(s,\Pi(t,x))=\Pi(st,x)$.
We shall fix $\mathcal{T}$ and suppress the action symbol.
In lots of literatures, $\mathcal{X}$ is also called a \emph{topological transformation group} or a \emph{flow}.
Usually we omit $\Pi$ and denote a system by $(X,\mathcal{T})$.

Let $(X,\mathcal{T})$ be a system and $x\in X$, then $\mathcal{O}(x,\mathcal{T})$
denotes the \emph{orbit} of $x$, which is also denoted by $\mathcal{T}x$.
A subset $A\subset X$ is called \emph{invariant} if $ta\in A$ for all $a\in A$ and $t\in \mathcal{T}$.
When $Y\subset X$ is a closed and $\mathcal{T}$-invariant subset of the system $(X,\mathcal{T})$
we say that the system $(Y,\mathcal{T})$ is a \emph{subsystem} of $(X,\mathcal{T})$.
If $(X,\mathcal{T})$ and $(Y,\mathcal{T})$ are two dynamical systems their \emph{product system} is the system
$(X\times Y,\mathcal{T})$, where $t(x,y)=(tx,ty)$.
A system $(X,\mathcal{T})$ is called \emph{minimal} if $X$ contains no proper closed invariant subsets.

\subsection{Enveloping semigroups}
Given a system $(X,\mathcal{T})$ its \emph{enveloping semigroup}
or \emph{Ellis semigroup} $E(X,\mathcal{T})$ is defined as the closure of the set $\{t:t\in\mathcal{T}\}$ in $X^X$
(with its compact, usually non-metrizable, pointwise convergence topology).
The maps
$E\to E:p \mapsto  pq$ and $p\mapsto  tp$ are continuous for all $q\in E$ and $t\in \mathcal{T}$.

\subsection{Idempotents and ideals}
For a semigroup the element $u$ with $u^2 =u$ is called
an \emph{idempotent}. Ellis-Namakura Theorem says that for any enveloping semigroup $E$
the set $J(E)$ of idempotents of $E$ is not empty \cite{ER}.
A non-empty subset $I\subset E$ is
a \emph{left ideal} (resp. \emph{right ideal}) if it $EI\subset I$ (resp. $IE\subset I$). A \emph{minimal left ideal} is
the left ideal that does not contain any proper left ideal of $E$.
Obviously every left
ideal is a semigroup and every left ideal contains some minimal left ideal.

An idempotent $u \in J(E)$ is \emph{minimal} if $v \in J(E)$ and $vu = v$ implies $uv = u$. The following
results are well-known \cite{EEN01,FK}: let $ L$ be a left ideal of enveloping semigroup $E$ and $u \in J(E)$.
Then there is some idempotent $v$ in $Lu$ such that $uv = v$ and $vu = v$; an idempotent is minimal
if and only if it is contained in some minimal left ideal.

A useful result about minimal point is as follows:

\begin{prop}
  Let $I$ be a minimal left ideal.
  A point $x\in X$ is minimal if and only if $ux=x$ for some $u\in I$.
\end{prop}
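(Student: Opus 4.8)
The plan is to work entirely inside the enveloping semigroup $E=E(X,\mathcal{T})$ and to analyze the set $Ix=\{px:p\in I\}$, exploiting the two continuity facts recorded in the excerpt: right multiplication $p\mapsto pq$ is continuous for every $q\in E$, and $p\mapsto tp$ is continuous for every $t\in\mathcal{T}$. The whole proof reduces to three preliminary observations, after which both implications fall out immediately.

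First I would record that, for any $z\in X$, the orbit closure satisfies $\overline{\mathcal{T}z}=Ez$: the inclusion $Ez\subseteq\overline{\mathcal{T}z}$ holds because every $p\in E$ is a pointwise limit of a net drawn from $\mathcal{T}$, while $\overline{\mathcal{T}z}\subseteq Ez$ follows since $Ez$ is the image of the compact set $E$ under the evaluation $p\mapsto pz$, which is continuous for the product topology on $X^X$, hence $Ez$ is closed and contains $\mathcal{T}z$. Next I would check that $Ix$ is a closed $\mathcal{T}$-invariant subset of $X$, i.e.\ a subsystem: it is closed as the evaluation image of the compact ideal $I$ (minimal left ideals are closed, since for $p\in I$ the set $Ep$ is a closed left ideal contained in $I$, whence $Ep=I$ by minimality), and it is invariant because for $t\in\mathcal{T}$ and $p\in I$ one has $t(px)=(tp)x\in Ix$, using $tp\in EI\subseteq I$. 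Finally I would show $(Ix,\mathcal{T})$ is minimal: for every $p\in I$ the nonempty left ideal $Ep$ lies in $I$, so $Ep=I$; hence for any $y=px\in Ix$ associativity gives $\overline{\mathcal{T}y}=Ey=(Ep)x=Ix$, so every orbit closure inside $Ix$ equals $Ix$.

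Granting these three facts, the equivalence is short. For the forward direction, if $x$ is minimal then $\overline{\mathcal{T}x}=Ex$ is a minimal subsystem; since $Ix$ is a nonempty closed invariant subset of $Ex$ (note $Ix\subseteq Ex$), minimality forces $Ix=Ex\ni x$, so $x=ux$ for some $u\in I$. For the converse, if $ux=x$ for some $u\in I$, then $x\in Ix$, and $Ix$ is a minimal set by the third observation, so $x$ lies in a minimal subsystem and is therefore a minimal point.

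The only genuinely delicate ingredients are the semigroup-theoretic facts behind the second and third observations, namely that a minimal left ideal is closed and that $Ep=I$ for every $p\in I$; both rest on the continuity of right multiplication together with the minimality of $I$. I expect the main obstacle to be stating these with the correct continuity hypotheses rather than any deep content, since once they are in place the rest is a routine assembly of orbit-closure identities.
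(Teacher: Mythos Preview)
Your argument is correct. Note, however, that the paper does not actually supply a proof of this proposition: it appears in the appendix as a background fact, with the reader referred to the standard references \cite{JA88,ER} for details. What you have written is precisely the classical argument found in those sources --- identify orbit closures with $Ez$, show that a minimal left ideal $I$ is closed via $Ep=I$ for any $p\in I$, deduce that $Ix$ is a minimal subsystem, and read off both implications --- so there is nothing to compare beyond confirming that your proof matches the textbook treatment the paper is citing.
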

\subsection{Proximality}
Two points $x_1$ and $x_2$ are called \emph{proximal} if and only if
\[
\overline{\mathcal{T}(x_1,x_2)}\cap \Delta_X\neq \emptyset.
\]
Let $\mathcal{U}_X$ be the unique uniform structure of $X$, then
\[
\mathbf{ P}(X ) =\bigcap \{\mathcal{T}\alpha:\alpha\in \mathcal{U}_X \}
\]
is the collection of proximal pairs in $X$, the \emph{proximal relation}.
\begin{prop}\label{ELLIS}
  Let $(X,\mathcal{T})$ be a dynamical system. Then
  \begin{enumerate}
    \item The points $x_1,x_2$ are proximal in $(X,\mathcal{T})$ if and only if $px_1=px_1$ for some $p\in E(X,\mathcal{T})$.
    \item If $u$ is an idempotent in $E(X,\mathcal{T})$, then $(x,ux)\in \mathbf{P}(X)$ for every $x\in X$.
    \item There is a minimal point $x'\in  \overline{\mathcal{O}(x,\mathcal{T} ) }$ such that
     $(x,x')\in \mathbf{P}(X)$.
    \item  If $(X,T)$ is minimal, then $(x, y) \in \mathbf{P}(X)$ if and only if there is some minimal idempotent
    $u\in E(X,\mathcal{T})$ such that $y = ux$.
  \end{enumerate}
\end{prop}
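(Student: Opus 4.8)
The plan is to establish the four parts in order, each serving as a tool for the next, together with the enveloping-semigroup facts recalled above (the Ellis--Namakura theorem, the ideal structure, and the criterion for minimal points). For (1) I would use that for a fixed point the evaluation map $E(X,\mathcal{T})\to X$, $p\mapsto px$, is continuous; consequently $\{(px_1,px_2):p\in E(X,\mathcal{T})\}$ is a compact set that contains the orbit of $(x_1,x_2)$ and is contained in its closure, so it equals $\overline{\mathcal{O}((x_1,x_2),\mathcal{T})}$ inside $X\times X$. Since $x_1,x_2$ are proximal exactly when this orbit closure meets $\Delta_X$, proximality is equivalent to the existence of $p\in E(X,\mathcal{T})$ with $(px_1,px_2)\in\Delta_X$, that is $px_1=px_2$.

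Part (2) is then immediate from (1): since $u$ is idempotent, $ux=u^2x=u(ux)$, so $u$ sends $x$ and $ux$ to the same point and they are proximal by (1). For (3) I would choose a minimal left ideal $I$ of $E(X,\mathcal{T})$ (every left ideal contains one) and, by Ellis--Namakura, an idempotent $u\in I$; setting $x'=ux$ gives a point of $Ex=\overline{\mathcal{O}(x,\mathcal{T})}$ that is proximal to $x$ by (2), while $ux'=u^2x=ux=x'$ with $u\in I$ shows, via the recalled criterion for minimal points, that $x'$ is minimal.

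The substantive part is the forward implication of (4); the backward implication is again (2). Given $(x,y)\in\mathbf{P}(X)$, part (1) produces $q\in E(X,\mathcal{T})$ with $qx=qy$. The key observation is that the collapsing set $R=\{p\in E(X,\mathcal{T}):px=py\}$ is a nonempty closed left ideal: it is closed as the preimage of $\Delta_X$ under the continuous map $p\mapsto(px,py)$, it contains $q$, and if $px=py$ then $(rp)x=r(px)=r(py)=(rp)y$ for every $r\in E(X,\mathcal{T})$. Hence $R$ contains a minimal left ideal $I$. Since $(X,T)$ is minimal, $y$ is a minimal point, so $Iy=X$ and thus $\{p\in I:py=y\}$ is a nonempty closed subsemigroup, which by Ellis--Namakura contains an idempotent $v$; being an idempotent in a minimal left ideal, $v$ is a minimal idempotent. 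Finally $v\in I\subset R$ forces $vx=vy=y$, giving $y=vx$ as required.

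I expect the main obstacle to be precisely this middle step of (4): recognizing that the set $R$ of elements collapsing the pair $(x,y)$ is itself a left ideal, so that the arbitrary collapsing element $q$ supplied by (1) can be upgraded to a minimal idempotent $v$ fixing $y$. Once this structural point is in hand, the remaining verifications---that $R$ is closed and invariant, that $Iy=X$ by minimality of $X$, and that $\{p\in I:py=y\}$ is a subsemigroup to which Ellis--Namakura applies---are routine.
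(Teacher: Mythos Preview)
The paper does not actually prove this proposition: it is stated in the appendix as a standard fact about enveloping semigroups, with references to Auslander \cite{JA88} and Ellis \cite{ER}, and no argument is supplied. So there is no ``paper's own proof'' to compare against.

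Your proposed argument is correct and is essentially the classical proof one finds in those references. Part (1) is exactly the observation that $p\mapsto(px_1,px_2)$ is continuous with compact domain, so its image is $\overline{\mathcal{T}(x_1,x_2)}$; parts (2) and (3) follow as you say. For (4), the identification of $R=\{p:px=py\}$ as a closed left ideal, the passage to a minimal left ideal $I\subset R$, the use of minimality of $X$ to get $Iy=X$, and the Ellis--Namakura step on the closed subsemigroup $\{p\in I:py=y\}$ are all standard and correctly assembled. One small point worth making explicit when you write it up: $Iy$ is closed because $I$ is compact and evaluation at $y$ is continuous, and it is $\mathcal{T}$-invariant because $I$ is a left ideal; minimality of $X$ then forces $Iy=X$. (Also note the obvious typo in the statement of (1), where $px_1=px_1$ should read $px_1=px_2$.)
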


\end{document}